\def\sideremark#1{\ifvmode\leavevmode\fi\vadjust{\vbox to0pt{\vss% the remark
 \hbox to 0pt{\hskip\hsize\hskip1em%                          will appear only
 \vbox{\hsize2.1cm\tiny\raggedright\pretolerance10000%          on the side
  \noindent #1\hfill}\hss}\vbox to15pt{\vfil}\vss}}}%
\numberwithin{equation}{section}
\newtheorem{theorem}{Theorem}[section]
\newtheorem{proposition}[theorem]{Proposition}
\newtheorem{lemma}[theorem]{Lemma}
\newtheorem{remark}[theorem]{Remark}
\newtheorem{example}[theorem]{Example}
\newtheorem{corollary}[theorem]{Corollary}
\newtheorem{definition}[theorem]{Definition}
\newcommand{\bt}{\begin{theorem}}
\newcommand{\et}{\end{theorem}}
\newcommand{\bl}{\begin{lemma}}
\newcommand{\el}{\end{lemma}}
\newcommand{\bd}{\begin{definition}}
\newcommand{\ed}{\end{definition}}
\newcommand{\bc}{\begin{corollary}}
\newcommand{\ec}{\end{corollary}}
\newcommand{\bx}{\begin{example}}
\newcommand{\ex}{\end{example}}
\newcommand{\bi}{\begin{exercise}}
\newcommand{\ei}{\end{exercise}}
\newcommand{\bo}{\begin{proposition}}
\newcommand{\eo}{\end{proposition}}
\newcommand{\br}{\begin{remark}}
\newcommand{\er}{\end{remark}}
\newcommand{\be}{\begin{equation}}
\newcommand{\ee}{\end{equation}}
\newcommand{\ba}{\begin{align}}
\newcommand{\ea}{\end{align}}
\newcommand{\bn}{\begin{enumerate}}
\newcommand{\en}{\end{enumerate}}
\newcommand{\bg}{\begin{align*}}
\newcommand{\bcs}{\begin{cases}}
\newcommand{\ecs}{\end{cases}}
\newcommand{\bean}{\begin{eqnarray*}}
\newcommand{\eean}{\end{eqnarray*}}
\newcommand{\dis}{\displaystyle}
\renewcommand{\leq}{\leqslant}
\renewcommand{\geq}{\geqslant}
\title[Dirichlet problems involving critical growth]{Multi-peak solutions for singularly perturbed nonlinear Dirichlet problems involving critical growth}
\author[Y.~He]{Yi He}
\author[J. C. ~Wei]{Juncheng Wei}
\author[J. J.~Zhang]{Jianjun Zhang}
\address[Y. He]{\newline\indent School of Mathematics and Statistics
\newline\indent
South-Central University for Nationalities
\newline\indent
Wuhan 430074, P. R. China}
\email{\href{mailto:heyi19870113@163.com}{heyi19870113@163.com}}
\address[J. C. Wei]{\newline\indent
Department of Mathematics,
\newline\indent
University of British Columbia,
\newline\indent
Vancouver V6T 1Z2, Canada}
\email{\href{mailto:jcwei@math.ubc.ca}{jcwei@math.ubc.ca}}
\address[J. J. ~Zhang]{\newline\indent College of Mathematics and Statistics
\newline\indent
Chongqing Jiaotong University
\newline\indent
Chongqing 400074, PR China}
\email{\href{mailto:zhangjianjun09@tsinghua.org.cn}{zhangjianjun09@tsinghua.org.cn}}
\thanks{The research of J. Wei is partially supported by NSERC of Canada. Y. He was supported by NSFC (No. 11601530) and the Fundamental Research Funds for the Central Universities, South-Central University for Nationalities (No. CZT 20008). J. J. Zhang was supported by NSFC (No. 11871123).}
\subjclass[2010]{35A15, 35J20, 58E05}
\date{April 26, 2022}
\keywords{Singularly perturbed, Dirichlet problem, Distance function, Multi-peak solutions, Concentration.}
\begin{document}

\begin{abstract}
We consider the following singularly perturbed elliptic problem
\[
 - {\varepsilon ^2}\Delta u + u = f(u){\text{ in }}\Omega ,{\text{ }}u > 0{\text{ in }}\Omega ,{\text{ }}u = 0{\text{ on }}\partial \Omega ,
\]
where $\Omega$ is a domain in ${\mathbb{R}^N}(N \ge 3)$, not necessarily bounded, with boundary $\partial \Omega  \in {C^2}$ and the nonlinearity $f$ is of critical growth. In this paper, we construct a family of multi-peak solutions to the equation given above which concentrate around any prescribed finite sets of local maxima of the distance function from the boundary $\partial \Omega$.
%Our result complements the study made in [J. Byeon, J. Zhang, W. Zou, Calc. Var. Partial Differential Equations 2013] in the sense that, in that paper, only the global single-peak solutions were considered.
\end{abstract}
\maketitle

\section{Introduction and main result}

\setcounter{equation}{0}

In the present paper, we are concerned with the following singularly perturbed nonlinear Dirichlet problem
\begin{equation}\label{1.1}
\left\{ \begin{gathered}
   - {\varepsilon ^2}\Delta u + u = f(u){\text{ in }}\Omega , \hfill \\
  u > 0{\text{ in }}\Omega ,~u = 0{\text{ on }}\partial \Omega , \hfill \\
\end{gathered}  \right.
\end{equation}
where $\varepsilon >0$ is a small parameter and $f \in C(\mathbb{R},\mathbb{R})$ satisfies
\begin{flalign*}
&({F_1})~f(t) = 0{\text{ for }}t < 0{\text{ and }}\mathop {\lim }\limits_{t \to {0^ + }} f(t)/t = 0.&
\end{flalign*}
\begin{flalign*}
&({F_2})~\mathop {\lim }\limits_{t \to  + \infty } f(t)/{t^{{2^ * } - 1}} = 1.&
\end{flalign*}
\begin{flalign*}
&({F_3})~{\text{there exist }}\lambda  > 0{\text{ and }}2 < q < {2^ * }{\text{ such that }}f(t) \ge \lambda {t^{q - 1}} + {t^{{2^ * } - 1}}{\text{ for }}t \ge 0,&
\end{flalign*}
where ${2^ * } = 2N/(N - 2)$. $({F_1})$-$({F_3})$ were introduced by J. Zhang and W.Zou \cite{zz} and can be regarded as an extension of the celebrated Berestycki-Lions' type nonlinearity (\cite{bl1,bl2}) to the critical growth. Furthermore, we assume that $\Omega$ is a domain in ${\mathbb{R}^N}(N \ge 3)$, not necessarily bounded, with boundary $\partial \Omega  \in {C^2}$ and there are $K$ smooth bounded sub-domains of $\Omega$, ${\Lambda ^1}$, ${\Lambda ^2}$,...,${\Lambda ^K}$ compactly contained in $\Omega$ satisfying
\begin{flalign*}
&({H_1})~{D_k}: = \mathop {\max }\limits_{x \in {\Lambda ^k}} d(x,\partial \Omega ) > \mathop {\max }\limits_{x \in \partial {\Lambda ^k}} d(x,\partial \Omega ),{\text{ }}k = 1,2,...,K.&
\end{flalign*}
\begin{flalign*}
&({H_2})~\mathop {\min }\limits_{i \ne j} d({\Lambda ^i},{\Lambda ^j}) > 2\mathop {\max }\limits_{1 \le k \le K} \mathop {\max }\limits_{x \in {\Lambda ^k}} d(x,\partial \Omega ),
&
\end{flalign*}
where $d({\Lambda ^i},{\Lambda ^j})$ is the distance between ${\Lambda ^i}$, ${\Lambda ^j}$, i.e.
\[
d({\Lambda ^i},{\Lambda ^j}) = \mathop {\inf }\limits_{x \in {\Lambda ^i},y \in {\Lambda ^j}} d(x,y).
\]
This kind of hypothesis was first introduced by M. del Pino, P. L. Felmer and J. Wei in \cite{dfw}.

There have been many results on equation \eqref{1.1} and related elliptic systems (see \cite{b,b1,bzz,cdny,df2,dfw,dfw1,ln,lw,nw,ntw,ny,w,wz} and references therein). For global cases, W. M. Ni and J. Wei \cite{nw} proved that for  $\varepsilon > 0$ small, there exists a positive ground state solution to \eqref{1.1} which possesses a single spike-layer with its unique peak in the interior of $\Omega$. Moreover, this unique peak must concentrate around  the ``most-centered" part of $\Omega$, i.e. the set of global maximum of the distance function $d(x,\partial \Omega )$, $x \in \Omega $. Note that, they required that $\Omega $ is a smooth bounded domain in ${\mathbb{R}^N}(N \ge 2)$ and $f:\mathbb{R} \to \mathbb{R}$ is of ${C^{1,\alpha }}(\mathbb{R},\mathbb{R})$ with $0 < \alpha  < 1$ satisfying the following conditions.
\begin{itemize}
\item [$({f_1})$] $f(t) \equiv 0$ for $t \le 0$ and $f(t) \to  + \infty $ as $t \to  + \infty $.
\item [$({f_2})$] For $t \ge 0$, $f$ admits the decomposition in ${C^{1,\alpha }}(\mathbb{R},\mathbb{R})$, $f(t) = {f_1}(t) - {f_2}(t)$, where
\begin{itemize}
\item [$(i)$] ${f_1}(t) \ge 0$ and ${f_2}(t) \ge 0$ with ${f_1}(0) = {f'_1}(0) = 0$ and ${f_2}(0) = {f'_1}(0) = 0$;
\item [$(ii)$] ${f_1}(t)/{t^q}$ is non-decreasing, ${f_2}(t)/{t^q}$ is non-increasing for $t>0$ respectively for some $q \ge 1$ and ${f_1}(t)/t$ is strictly increasing for $t>0$.
\end{itemize}
\item [$({f_3})$] $f(t) = O({t^p})$ as $t \to +\infty$, where $1 < p < (N + 2)/(N - 2)$ if $N \ge 3$ and $1 < p < +\infty$ if $N = 2$.
\item [$({f_4})$] For some $ \mu  > 2$, $\mu \int_0^ t {f(s)} ds \le tf(t), t \ge 0$.
\item [$({f_5})$] there exists a unique radially symmetric solution $w \in {H^1}({\mathbb{R}^N})$ for $ - \Delta u + u = f(u)$, $u>0$ in ${\mathbb{R}^N}$ such that if $v \in {H^1}({\mathbb{R}^N})$ and satisfies $ - \Delta v + v = f'(w)v$, then $v = \sum\nolimits_{i = 1}^n {{a_i}\frac{{\partial w}}
{{\partial {x_i}}}} $ for some ${a_1},...,{a_n} \in \mathbb{R}$.
\end{itemize}
M. del Pino and P. L. Felmer \cite{df2} proved that the same concentration phenomenon can be obtained in a simpler way even without the non-degeneracy condition $({f_5})$. Their approach depends strongly on the monotonicity condition, i.e. $f(t)/t$ is strictly increasing for $t>0$. Furthermore, J. Byeon \cite{b} showed the concentration behavior of positive ground state solutions to (1.1) without conditions (ii) of $({f_2})$ and $({f_5})$ by some ODE technique and symmetrization methods for $N \ge 3$. When the nonlinearity $f$ is typically of the form $f(u) = u - u(u - a)(u - 1)$ with $0 < a < 1/2$, which has appeared in various models in applied mathematics, including population genetics and chemical reactor theory. W. M. Ni, I. Takagi and J. Wei \cite{ntw} proved that if $\Omega$ satisfies a geometric condition, that is, $\partial \Omega  = {\Gamma _1} \cup {\Gamma _2}$, where both ${\Gamma _1}$ and ${\Gamma _2}$ are closed (one of them could be empty) and satisfy (i) At every point of ${\Gamma _1}$, all sectional curvatures of ${\Gamma _1}$ are bounded below by a positive constant $\alpha > 0$ and (ii) There exists a point ${x_0}$ in ${\mathbb{R}^N}$ such that $(x - {x_0},\nu (x)) \le 0$ for all $x \in {\Gamma _2}$, where $\nu (x)$ denotes the unit outer normal to $\partial \Omega $ at $x$, the same concentration phenomenon as \cite{nw} occurs. Their assumptions for $f(t) = t + g(t)$ are the following
\begin{itemize}
\item [$({g_1})$] $g \in {C^{1,\alpha }}(\mathbb{R},\mathbb{R})$ with $0 < \alpha  < 1$ and $g(0) = 0$, $g'(0) < 0$.
\item [$({g_2})$] $g$ has two positive zero $z_1$ and $z_2$, such that ${z_1} < {z_2}$ and has no other positive zeros.
\item [$({g_3})$] $\int_0^{{z_2}} {g(s)} ds > 0$.
\item [$({g_4})$] The function $t \to f(t)/(t - {t_0})$ is decreasing in the interval $({t_0},{z_2})$, where ${t_0}$ is the unique number in $({z_1},{z_2})$ such that$\int_0^{{t_0}} {f(s)} ds = 0$.
\end{itemize}
J. Byeon \cite{b1} developed a new variational method to construct a family of single-peak solutions to \eqref{1.1} concentrating around the set of global maximum of the distance function $d(x,\partial \Omega )$, $x \in \Omega $ under an almost optimal conditions on $f$. More precisely, the author assumed that $\Omega $ is a smooth bounded domain in ${\mathbb{R}^N}(N \ge 3)$ and $f$ satisfies the celebrated Berestycki-Lions' type assumptions (\cite{bl1,bl2}), i.e.
\begin{itemize}
\item [$({h_1})$] $f \in C(\mathbb{R},\mathbb{R})$ such that $f(t) = 0$ for $t \le 0$ and $\mathop {\lim }\nolimits_{t \to {0^ + }} f(t)/t = 0$.
\item [$({h_2})$] There exists $p \in (1,(N + 2)/(N - 2))$ such that $\mathop {\overline {\lim } }\nolimits_{t \to  + \infty } f(t)/{t^p} < \infty $.
\item [$({h_3})$] There exists $T>0$ such that ${T^2}/2 < F(T): = \int_0^t {f(t)} dt$.
\end{itemize}
J. Byeon, J. Zhang and W. Zou \cite{bzz} extended the results of \cite{b1} to the critical case, i.e. $f$ satisfies $({F_1})$-$({F_3})$.

For local cases, J. Wei \cite{w} investigated a related problem
\begin{equation}\label{1.2}
\left\{ \begin{gathered}
   - {\varepsilon ^2}\Delta u + u = u^p{\text{ in }}\Omega , \hfill \\
  u > 0{\text{ in }}\Omega ,~u = 0{\text{ on }}\partial \Omega , \hfill \\
\end{gathered}  \right.
\end{equation}
where $\Omega $ is a smooth bounded domain in ${\mathbb{R}^N}(N \ge 2)$, $1 < p < (N + 2)/(N - 2)$ if $N \ge 3$ and $1 < p < +\infty$ if $N = 2$. The author shows a local version of the results in \cite{nw}, namely there exists a family of local single-peak solutions to \eqref{1.2} which concentrate around any prescribed strict local maximum point of the distance function $d(x,\partial \Omega )$, $x \in \Omega $. D. Cao, N. Dancer, E. Noussair and S. Yan \cite{cdny} constructed a family of double-peak solutions to \eqref{1.2} which concentrate around two strict local maximum points (denoted by $P_1$ and $P_2$) of the the distance function $d(x,\partial \Omega )$, $x \in \Omega $. Moreover, they required that
\begin{center}
(i) $|{P_1} - {P_2}| > \max \{ 1/(p - 1),2\} d({P_1},\Omega )$  and  (ii) $d({P_1},\partial \Omega ) = d({P_2},\partial \Omega )$.
\end{center}
M. del Pino, P. L. Felmer and J. Wei \cite{dfw} constructed a family of multi-peak solutions to \eqref{1.1} concentrating around any prescribed finite sets of local maxima of the distance function $d(x,\partial \Omega )$, $x \in \Omega $ via the penalization method due to \cite{df1}. In \cite{dfw}, the authors assumed that $\Omega$ is a smooth, not necessarily bounded domain in ${\mathbb{R}^N}(N \ge 2)$, $f$ satisfies $(f_1)$-$(f_5)$ and the distance function $d(x,\partial \Omega )$, $x \in \Omega $ satisfies $(H_1)$-$(H_2)$.

Furthermore, J. Byeon and K. Tanaka \cite{bt} considered the following singularly perturbed elliptic problem
\begin{equation}\label{add20}
 - {\varepsilon ^2}\Delta u + V(x)u = f(u){\text{ in }}{\mathbb{R}^N}.
\end{equation}
They developed a local variational and deformation argument to establish the existence of a family of positive solutions to \eqref{add20} with multiple peaks cluster near a local maximum of $V(x)$ when the nonlinearity $f$ is subcritical, actually satisfies the Berestycki-Lions' type assumptions $({h_1})$-$({h_3})$. An instability property of critical points of $V(x)$ plays an crucial role in the gluing argument. And then, J. Byeon and K. Tanaka \cite{bt1} applied the arguments in \cite{bt} to show the existence of multi-bump positive solutions for a nonlinear elliptic problem in expanding tubular domains. Compared to the results by J. Byeon and K. Tanaka \cite{bt}, our main interest in this paper is the existence of solutions with concentration relying on the effect from the boundary of domains.
\vskip0.1in
Now, we are ready to state our main result as follows.
\begin{theorem}\label{1.1.}
Let $\Omega$ be a domain in ${\mathbb{R}^N}(N \ge 3)$, not necessarily bounded, with boundary $\partial \Omega  \in {C^2}$, which satisfies $({H_1})$-$({H_2})$. Assume that $f$ satisfies $({F_1})$-$({F_3})$ and one of the following two conditions holds
\begin{flalign*}
&{({F_4})_1}{\text{ }}{\text{For each }}\lambda  > 0{\text{ fixed, let }}2 < q < {2^ * }{\text{ for }}N \ge 4{\text{ and }}4 < q < 6{\text{ for }}N = 3.&
\end{flalign*}
\begin{flalign*}
&{({F_4})_2}{\text{ }}{\text{For}}{\text{ }}\lambda  > 0{\text{ sufficiently large, let }}2 < q \le 4{\text{ for }}N = 3.&
\end{flalign*}
Then, there exists ${\varepsilon _0} > 0$ such that for every $\varepsilon  \in (0,{\varepsilon _0}]$, \eqref{1.1} possesses a positive bound state solution ${u_\varepsilon } \in C_{{\text{loc}}}^{2,\alpha }({\mathbb{R}^N})$ for some $\alpha  \in (0,1)$. Moreover,\\
$(i)$ ${u_\varepsilon }$ possesses $K$ local maximum points $x_\varepsilon ^k \in {\Lambda ^k}$$(k = 1,2,...,K)$ such that for each $k$,
\[
d(x_\varepsilon ^k,\partial \Omega ) \to \mathop {\max }\limits_{x \in {\Lambda ^k}} d(x,\partial \Omega ){\text{ as }}\varepsilon  \to 0.
\]
$(ii)$ There holds that
\[
{u_\varepsilon }(x) \le {C_1}\exp \Bigl( { - \frac{{{C_2}}}
{\varepsilon }\mathop {\min }\limits_{1 \le k \le K} |x - x_\varepsilon ^k|} \Bigr),
\]
where ${C_1},{C_2} > 0$ are independent of $\varepsilon$.
\end{theorem}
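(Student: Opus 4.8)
The plan is to combine three ingredients: the penalization scheme of del~Pino--Felmer--Wei~\cite{dfw}, which localises the $K$ concentration points near the sets $\Lambda^k$; the truncation of the critical nonlinearity of Byeon--Zhang--Zou~\cite{bzz}, which restores compactness below the first critical level; and a local variational and deformation argument in the spirit of Byeon~\cite{b1} and Byeon--Tanaka~\cite{bt}, which produces a critical point under only the weak hypotheses $(F_1)$--$(F_3)$ (no monotonicity, no Ambrosetti--Rabinowitz condition). After the rescaling $u(x)=v(x/\varepsilon)$, equation \eqref{1.1} becomes $-\Delta v+v=f(v)$ on $\Omega_\varepsilon:=\varepsilon^{-1}\Omega$ with zero Dirichlet data. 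I would replace $f$ by a modified nonlinearity $g_\varepsilon(y,t)$ that $(a)$ coincides with $f(t)$ on $\varepsilon^{-1}\bigl(\bigcup_k\Lambda^k\bigr)$; $(b)$ outside this set is cut down to $\min\{f(t),\nu t\}$ for a small fixed $\nu\in(0,1)$, which prevents any concentration there and makes the functional coercive even when $\Omega$ is unbounded; and $(c)$ is truncated in the critical variable above a large $\varepsilon$-dependent threshold $T_\varepsilon\to\infty$, as in \cite{bzz}, so that the associated functional $\Gamma_\varepsilon$ on $H^1_0(\Omega_\varepsilon)$ satisfies the Palais--Smale condition below the level $\tfrac1N S^{N/2}$, with $S$ the Sobolev constant. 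A solution of the modified problem lying below all these thresholds is a genuine solution of \eqref{1.1}, so the argument reduces to producing such a solution.

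Next I would record the limit problem and the test configurations. By \cite{zz,bzz}, the autonomous equation $-\Delta w+w=f(w)$ on $\mathbb{R}^N$ has a radially symmetric least-energy solution $w$ with energy $m_\infty$, and, crucially, $m_\infty<\tfrac1N S^{N/2}$; conditions $(F_4)_1$ and $(F_4)_2$ are exactly the dimension-dependent restrictions on $q$ (and the largeness of $\lambda$ when $N=3$) that force this strict inequality, in the spirit of Brezis--Nirenberg. For $P=(P^1,\dots,P^K)\in\prod_k\Lambda^k$ I would use the test function $V_{\varepsilon,P}(y)=\sum_k\chi_k(\varepsilon y)\,w\bigl(y-\varepsilon^{-1}P^k\bigr)$, with smooth cut-offs $\chi_k$ supported near $\Lambda^k$, and compute
\[
\Gamma_\varepsilon(V_{\varepsilon,P})=K m_\infty+\sum_k\phi_\varepsilon\bigl(d(P^k,\partial\Omega)\bigr)+(\text{interaction and cut-off errors}),
\]
where $\phi_\varepsilon>0$ is strictly decreasing with $\phi_\varepsilon(d)\approx e^{-2d/\varepsilon}$. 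The peak--peak interaction terms are of order $e^{-d(\Lambda^i,\Lambda^j)/\varepsilon}$; since $d(\Lambda^i,\Lambda^j)>2\max_kD_k$ by $(H_2)$, these (and, after a suitable choice of the $\chi_k$, the cut-off errors) are strictly of smaller order than the self-corrections $\phi_\varepsilon(d(P^k,\partial\Omega))$. Hence, over the configuration space $\prod_k\Lambda^k$, the $\varepsilon$-principal part of $\Gamma_\varepsilon(V_{\varepsilon,P})-Km_\infty$ is the decoupled, strictly $d$-monotone sum $\sum_k\phi_\varepsilon(d(P^k,\partial\Omega))$.

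With this in hand I would carry out the local variational argument. Working in a tube $\mathcal{N}_\varepsilon$ of small $H^1_0(\Omega_\varepsilon)$-radius around $\{V_{\varepsilon,P}:P\in\prod_k\Lambda^k\}$, the key inputs are: $(i)$ a concentration lemma asserting that any sequence $v_n\in\mathcal{N}_\varepsilon$ with $\Gamma_\varepsilon(v_n)\to K m_\infty$ and $\Gamma_\varepsilon'(v_n)\to0$ converges, after the natural translations, to a sum of $K$ copies of $w$ (the hypothesis $m_\infty<\tfrac1N S^{N/2}$ being what rules out Aubin--Talenti bubbling here); and $(ii)$ a local deformation lemma, adapted in the manner of Byeon--Tanaka, producing a negative pseudo-gradient flow for $\Gamma_\varepsilon$ that never leaves $\mathcal{N}_\varepsilon$. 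Minimising $\Gamma_\varepsilon$ over $\mathcal{N}_\varepsilon$ and flowing down then yields a critical point $v_\varepsilon\in\mathcal{N}_\varepsilon$ of $\Gamma_\varepsilon$ at level $K m_\infty+o(1)<\tfrac1N S^{N/2}$, whose $K$ concentration centres lie in the interior of $\prod_k\Lambda^k$: the strict inequality in $(H_1)$ is precisely what forbids a centre from reaching $\partial\Lambda^k$, since by the expansion above the energy is strictly minimised, to principal order, at interior maxima of $d(\cdot,\partial\Omega)$.

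Finally I would remove the modifications and read off the conclusions. A Brezis--Kato/Moser iteration together with the energy bound gives a uniform $L^\infty$ bound on $v_\varepsilon$ below $T_\varepsilon$, so the critical truncation is inactive; since $v_\varepsilon$ is, after translation, $H^1$-close to $\sum_k w(\cdot-\varepsilon^{-1}P^k)$ and $w$ decays exponentially, a comparison/barrier argument on $\Omega_\varepsilon$ yields $v_\varepsilon(y)\le C_1\exp\bigl(-C_2\min_k|y-\varepsilon^{-1}P^k|\bigr)$, whence $v_\varepsilon<\nu\cdot(\text{id})$ outside $\varepsilon^{-1}\bigl(\bigcup_k\Lambda^k\bigr)$ and the penalization is inactive as well; thus $v_\varepsilon$ solves the original rescaled equation. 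Undoing the scaling gives a bound state $u_\varepsilon$, of class $C^{2,\alpha}_{\mathrm{loc}}$ by standard elliptic regularity, positive by the maximum principle, with $K$ local maxima $x_\varepsilon^k\in\Lambda^k$ satisfying $d(x_\varepsilon^k,\partial\Omega)\to D_k$ (from the concentration lemma and the energy expansion) and the exponential bound $(ii)$. \textbf{The main obstacle} is the interplay between the critical growth and the local deformation: one must keep the critical level strictly below $\tfrac1N S^{N/2}$ so that $\Gamma_\varepsilon$ is compact and the bubbling profiles are genuine Berestycki--Lions ground states rather than Aubin--Talenti bubbles --- this is what forces $(F_4)_1$, $(F_4)_2$ and is genuinely delicate when $N=3$ --- while simultaneously running the Byeon--Tanaka deformation inside $\mathcal{N}_\varepsilon$ with only $(F_1)$--$(F_3)$ available, using the exponentially precise energy expansion of the test configurations to trap the centres in the interior via $(H_1)$ and to decouple the peaks via $(H_2)$.
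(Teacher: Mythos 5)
The broad strategy is right --- penalize outside $\bigcup_k\Lambda^k$, work in a tube around sums of $K$ translated ground states, use that $m_\infty<\frac1N S^{N/2}$ to rule out Aubin--Talenti bubbling, and exploit $(H_1)$--$(H_2)$ to decouple and locate the peaks --- and this is indeed the paper's overall plan. But your sketch has two genuine gaps and one structural difference worth flagging.

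\textbf{The variational step is not a minimization.} You propose to ``minimize $\Gamma_\varepsilon$ over $\mathcal{N}_\varepsilon$ and flow down.'' This cannot work as stated: the tube $\mathcal{N}_\varepsilon$ (the set $X_\varepsilon^{d_0}$ in the paper) contains, for each peak, a scaling direction $w(\cdot/t)$, $t\in[0,t_0]$, along which the energy rises from $0$ to near $m_\infty$ and then drops to a large negative value. The sought critical point is a saddle, not a local minimizer over $\mathcal{N}_\varepsilon$, and the infimum of $\Gamma_\varepsilon$ over $\mathcal{N}_\varepsilon$ is far below $Km_\infty$. What the paper actually does (Lemma~\ref{3.7.}) is a deformation argument over the $K$-cube of paths $\gamma_\varepsilon(s)=\sum_kW^k_{\varepsilon,s_kt_0}$: it assumes no PS sequence exists in the tube near level $\widetilde c_\varepsilon$, runs a negative pseudo-gradient flow, and derives a contradiction via the Coti~Zelati--Rabinowitz gluing lemma applied to the $K$ individual mountain-pass levels $c_\varepsilon^k$. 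This min--max structure is the right substitute for your ``minimize and flow down,'' and there is no obvious way to avoid it under the weak hypotheses $(F_1)$--$(F_3)$.

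\textbf{The location of the peaks requires a sharp lower energy bound, not a test-function expansion.} You write down $\Gamma_\varepsilon(V_{\varepsilon,P})=Km_\infty+\sum_k\phi_\varepsilon(d(P^k,\partial\Omega))+\cdots$ and assert that minimization over configurations forces $d(P^k,\partial\Omega)\to D_k$. That only bounds the critical level from \emph{above} by $Km_\infty+\sum_k e^{-2D_k/\varepsilon}$-type terms (this is the content of Lemma~\ref{3.2.}). To conclude $d(x_\varepsilon^k,\partial\Omega)\to D_k$ you need the complementary \emph{lower} bound on the energy of the actual critical point in terms of its concentration distance from the boundary. Under $(F_1)$--$(F_3)$ alone --- no uniqueness, no non-degeneracy, $f$ merely continuous --- there is no Lyapunov--Schmidt expansion available, and this lower bound is the genuinely hard part of the paper. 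Step~3 of the proof constructs an explicit deformation $z^t_{\varepsilon_j,k}$ (a radial stretching in a ball of radius $d_{\varepsilon_j,k}+\beta_{1,k}$), estimates $\tfrac{d}{dt}E^k_{\varepsilon_j}$ at $t=1$ and $\tfrac{d^2}{dt^2}E^k_{\varepsilon_j}$ near $t=1$ via the boundary terms in the Pohozaev identity, mollifies and Schwartz-symmetrizes the resulting path into $H^1_{\mathrm{rad}}(B_R)$, and then invokes the lower energy estimate for the Dirichlet mountain-pass level on $B_R$ from Byeon's work to get the matching bound $E_{\varepsilon_j}(v_{\varepsilon_j})\ge Kc+C_{\delta_1}\sum_k e^{-2\delta_1(d_{\varepsilon_j,k}+\beta_{3,k})/\varepsilon_j}$ with $\delta_1>1$. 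Comparing this with the upper bound (which has $\delta<1$) and sending $\delta,\delta_1\to1$ is what forces $d_{\varepsilon_j,k}\to D_k$. None of this machinery is visible in your sketch, and the conclusion cannot be reached by the test-function expansion alone.

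\textbf{A different compactness device.} You propose to truncate $f$ in the critical variable at a level $T_\varepsilon\to\infty$ and recover PS below $\tfrac1N S^{N/2}$. The paper instead keeps $f$ untruncated and introduces the nonlocal functional $Q_\varepsilon(v)=\bigl(\varepsilon^{-1}\int_{\Omega_\varepsilon\setminus(\widetilde\Lambda)_\varepsilon}|v|^{2^*}-1\bigr)_+^2$; this is precisely what makes the gluing estimate $\int_{\Omega_\varepsilon\setminus(\widetilde\Lambda)_\varepsilon}|\xi_\varepsilon(s)|^p=o(1)$, $p\in(2,2^*]$, available inside Lemma~\ref{3.7.}, and it is removed a posteriori by the uniform exponential decay \eqref{3.98}. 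Your truncation might in principle be made to work, but you would have to re-derive the $L^\infty$ bound from scratch to show the truncation is inactive, and it is not clear that the gluing step survives that change; at minimum this is an unverified substitution, not a drop-in replacement.
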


As far as we know, the existence and concentration of multi-peak solutions to problem \eqref{1.1} in the critical case have not ever been studied by variational methods. The proof of Theorem~\ref{1.1.} is mainly based on variational methods. The main difficulties are three-fold.
\begin{itemize}
\item [$(1)$] Under the conditions $({F_1})$-$({F_3})$, neither the so called Ambrosetti-Rabinowitz condition nor the monotonicity condition of $f(s)/s$ holds. It prevent us from obtaining a bounded Palais-Smale sequence or using the Nehari manifold. So the arguments in \cite{dfw} seems difficult to be applied directly in this paper.
\item [$(2)$] The possible unboundedness of the domain $\Omega$ and the critical growth of the nonlinearity $f$ lead to the lack of compactness. To perform the proof of Theorem \ref{1.1}, we mainly use a deformation argument as in \cite{g,cr,bt} to glue together mountain pass solutions of the associated limit problems. The existence of multi-peak solutions heavily rely on the behavior of the energy functional in some annular neighborhood. In contrast with \cite{bt}, due to the presence of the Sobolev critical exponent, it seems more complicated to adopt the Lions lemma to establish the behavior of $\varepsilon$-dependent Palais-Smale sequences. To bypass this obstacle, a Liouville type result and the concentration compactness principle II due to P. L. Lions are involved.
\item [$(3)$] For constructing multi-peak solutions to \eqref{1.1} which concentrate around any prescribed finite set of local maxima of the distance function without the non-degeneracy condition, energy estimates plays a crucial role. Actually, one need to find sharp estimates of the difference between ${c_\varepsilon }$ (an energy level of the auxiliary functional ${J_\varepsilon }$) and $c$ (the mountain pass level of the limiting equation \eqref{2.1}) (Lemma~\ref{3.1.}, Lemma~\ref{3.2.} and \eqref{3.126} below).
\end{itemize}
As we shall see later, the above three aspects prevent us from using variational methods in a standard way and more delicate analysis should be carried out.

We emphasize that in \cite{dfw}, M. del Pino, P. L. Felmer and J. Wei \cite{dfw} investigated a subcritical problem with a more or less restricted class of nonlinearities $f$ and obtained a family of multi-peak solutions to \eqref{1.1} concentrating around any prescribed finite sets of local maxima of the distance function $d(x,\partial \Omega )$, $x \in \Omega $ via the penalization method due to \cite{df1}. Different from \cite{dfw}, in the present paper, we are concerned with the critical case with the nonlinearity $f$, which is considered to be almost optimal in some sense. The proof is mainly based on the variational approach which is initiated by \cite{b1,bzz}. To complete this section, we give the sketch of our proof.

First, we need to study the ``limiting problem"
\begin{equation}\label{1.3}
 - \Delta u + u = f(u){\text{ in }}{\mathbb{R}^N},~u > 0,~u \in {H^1}({\mathbb{R}^N}).
\end{equation}
It is unknown that whether positive solutions to \eqref{1.3} are unique or not under our assumptions above. Nevertheless, J. Byeon, J. Zhang and W. Zou \cite{bzz} indicated that the set of positive ground state solutions to \eqref{1.3} satisfies some compactness properties (Proposition~\ref{2.1.} below). This is crucial for finding multi-peak solutions which are close to a set of prescribed functions.

Second, to study \eqref{1.1}, we work with the following equivalent equation
\begin{equation}\label{1.4}
\left\{ \begin{gathered}
   - \Delta v + v = f(v){\text{ in }}{\Omega _\varepsilon }, \hfill \\
  v > 0{\text{ in }}{\Omega _\varepsilon },~v = 0{\text{ on }}\partial {\Omega _\varepsilon, } \hfill \\
\end{gathered}  \right.
\end{equation}
with the corresponding energy functional
\[
{I_\varepsilon }(v): = \frac{1}
{2}\int_{{\Omega _\varepsilon }} {|\nabla v{|^2}}  + \frac{1}
{2}\int_{{\Omega _\varepsilon }} {{v^2}}  - \int_{{\Omega _\varepsilon }} {F(v)} ,~v \in H_0^1({\Omega _\varepsilon }),
\]
where ${\Omega _\varepsilon }: = \{ y \in {\mathbb{R}^N}:\varepsilon y \in \Omega \} $. In contrast with \cite{b,b1,bzz}, where the mountain pass theorem was used in some bounded domain $\Omega$ and only the global single-peak solutions were considered, in the present paper, the conditions $(H_1)$-$(H_2)$ are local. So we introduce a local penalization method in \cite{df,df1,g}, which helps us to overcome the difficulty caused by the non-compactness due to the unboundedness of the domain $\Omega$.

Third, in order to construct multi-peak solutions to \eqref{1.1} with a critical nonlinearity $f$, inspired by \cite{bw,bj,bj1} but with some modifications, we introduce nonlocal penalization functionals ${Q_\varepsilon }$, $Q_\varepsilon ^k$ $((k = 1,2,...K)$. We modify the functional ${I_\varepsilon }$ and define auxiliary functionals ${J_\varepsilon }$, $J_\varepsilon ^k$ $(k = 1,2,...K)$ respectively (see Section 3 for details). It will be shown that this type of penalization forces the concentration phenomena to occur inside $\Lambda : = \mathop  \cup \nolimits_{k = 1}^K {\Lambda ^k}$ (Proposition~\ref{3.4.} below).

Finally, in order to get a critical point ${v_\varepsilon }$ of ${J_\varepsilon }$, we use a version of quantitative deformation lemma (Lemma~\ref{3.7.} below) to construct a special convergent Palais-Smale sequence of ${J_\varepsilon }$ for $\varepsilon > 0$ small. To prove that ${v_\varepsilon }$ is indeed a solution to the original problem \eqref{1.4}, we need to establish a uniform ${L^\infty }$-estimate of ${v_\varepsilon }$ by using the classical Brezis-Kato's type argument (see ``Proof of  Theorem \ref{1.1.}", \textbf{Step~2} below) and prove the concentration results of ${v_\varepsilon }$ by some more delicate energy estimates (see ``Proof of  Theorem \ref{1.1.}", \textbf{Step~3} below).

Noting that ${v_\varepsilon }$ consists essentially of $K$ disjoints parts, each part of which is close to a ground state solution to \eqref{1.3}.  Namely, in our approach we take into account the shape and location of the solutions we expect to find. Thus, on the one hand, we benefit from the advantage of the Lyapunov-Schmidt reduction type approach, which is to discover the solution around a small neighbourhood of a well chosen first approximation. On the other hand, our approach, which is purely variational, does not require any uniqueness or non-degeneracy conditions. Furthermore, since $f$ is merely required to be in $C(\mathbb{R},\mathbb{R})$, the corresponding energy functional ${I_\varepsilon }$ to \eqref{1.4} may be only of $C^1$-smoothness. For this reason, it becomes tough to use the Lyapunov-Schmidt reduction method, but the variational approach seems to be the right tool.

This paper is organized as follows. In Section 2, some preliminary results are introduced. In Section 3, we give the proof of Theorem~\ref{1.1.}.

\vspace{.5cm}

\section{Preliminary results}
\noindent The equation
\begin{equation}\label{2.1}
 - \Delta u + u = f(u){\text{ in }}{\mathbb{R}^N},~u > 0,~u \in {H^1}({\mathbb{R}^N})
\end{equation}
is called the limiting problem of \eqref{1.1} with the corresponding energy functional
\[
I(u) = \frac{1}
{2}\int_{{\mathbb{R}^N}} {|\nabla u{|^2}}  + \frac{1}
{2}\int_{{\mathbb{R}^N}} {{u^2}}  - \int_{{\mathbb{R}^N}} {F(u)} ,~u \in {H^1}({\mathbb{R}^N}),
\]
where $F(t) = \int_0^t {f(\tau)} d\tau$. In view of \cite{ps}, if $u \in {H^1}({\mathbb{R}^N})$ is a weak solution to \eqref{2.1}, the following Pohozaev's identity holds
\begin{equation}\label{add12}
P(u) = \frac{{N - 2}}
{2}\int_{{\mathbb{R}^N}} {|\nabla u{|^2}}  + N\Bigl( {\frac{1}
{2}\int_{{\mathbb{R}^N}} {{u^2}}  - \int_{{\mathbb{R}^N}} {F(u)} } \Bigr) = 0.
\end{equation}
Denoting by $c$ the ground state level of \eqref{2.1}, that is
\[
c = \inf \{ I(u):u \in {H^1}({\mathbb{R}^N})\backslash \{ 0\} {\text{ is a solution to }} \eqref{2.1}\}.
\]
$u \in {H^1}({\mathbb{R}^N})\backslash \{ 0\} $ is called a ground state solution to \eqref{2.1} if $u$ is a solution to \eqref{2.1} and satisfies $I(u) = c$. J. Zhang and W. Zou \cite{zz} proved that \eqref{2.1} has a ground state solution with the ground state level $c < \frac{1}
{N}{S^{N/2}}$ under the assumptions $(F_1)$-$(F_3)$, $(F_4)_1$ and $(F_4)_2$, where $S$ is the best Sobolev constant for the imbedding ${D^{1,2}}({\mathbb{R}^N}) \hookrightarrow {L^{{2^ * }}}({\mathbb{R}^N})$. Furthermore, arguing as in \cite{hit,jt}, we see that the ground state level $c$ and the mountain pass value coincide, i.e.
\[
c = \mathop {\inf }\limits_{\gamma  \in \Gamma } \mathop {\sup }\limits_{t \in [0,1]} I(\gamma (t)),
\]
where the set of paths is defined as
\[
\Gamma : = \{ {\gamma  \in C([0,1],{H^1}({\mathbb{R}^N})):\gamma (0) = 0{\text{ and }}I(\gamma (1)) < 0} \}.
\]
Let $S$ be the set of ground state solutions $U$ to \eqref{2.1} satisfying $U(0) = \mathop {\max }\nolimits_{x \in {\mathbb{R}^N}} U(x)$. From \cite{bjm,bl3,bzz,b}, we have the following results on $S$.

\begin{proposition}\label{2.1.}
$(i)$ For any $U \in S$, $U$ is radially symmetric and strictly decreasing with respect to $|x| = r > 0$.\\
$(ii)$ $S$ is compact in ${H^1}({\mathbb{R}^N})$.\\
$(iii)$ $0 < \inf \bigl\{ {{{\| U \|}_{{L^\infty }({\mathbb{R}^N})}}:U \in S} \bigr\} \le \sup \bigl\{ {{{\| U \|}_{{L^\infty }({\mathbb{R}^N})}}:U \in S} \bigr\} < \infty $.\\
$(iv)$ For any $\mu \in (0,1)$, there exists a constant ${C_\mu } > 0$ independent of $U \in S$, such that for any $U \in S$,
\[U(x) + |\nabla U(x)| \le {C_\mu }{e^{ - \mu |x|}}, x\in\mathbb{R}^N.\]
\end{proposition}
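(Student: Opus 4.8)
The plan is to deduce the four assertions from the equation \eqref{2.1}, the Pohozaev identity \eqref{add12}, and the strict bound $c<\tfrac1N S^{N/2}$ of \cite{zz}, in the spirit of \cite{bjm,bl3,bzz,b}. I would start with the profile of a single $U\in S$: a Brezis--Kato/Moser iteration on \eqref{2.1}, using $(F_1)$--$(F_2)$ to split the critical term, gives $U\in L^\infty(\mathbb{R}^N)\cap C^{2,\alpha}_{\text{loc}}(\mathbb{R}^N)$ and $U(x)\to0$ as $|x|\to\infty$; since on the right side of $-\Delta U=f(U)-U$ the map $t\mapsto f(t)-t=-t+o(t)$ is decreasing near $t=0$ by $(F_1)$, the moving-plane method in the form valid for continuous nonlinearities (or a symmetrization argument as in \cite{b}) shows $U$ is radial about some point, hence about the origin since $U(0)=\max U$, and the Hopf lemma makes the monotonicity strict; this is $(i)$. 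I would also record, from \eqref{add12}, that every solution of \eqref{2.1} obeys $I(U)=\tfrac1N\int_{\mathbb{R}^N}|\nabla U|^2$, so that $\int_{\mathbb{R}^N}|\nabla U|^2=Nc$ for all $U\in S$; testing \eqref{2.1} with $U$ and using $(F_1)$--$(F_2)$ and the Sobolev inequality then bounds $\int_{\mathbb{R}^N}U^2$ as well, so $S$ is bounded in $H^1(\mathbb{R}^N)$.

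For $(iii)$ I would treat the two bounds separately. If some $U_n\in S$ had $\|U_n\|_\infty\to0$, then $f(U_n)\leq\tfrac12 U_n$ pointwise for large $n$ by $(F_1)$, and testing \eqref{2.1} with $U_n$ gives $\int_{\mathbb{R}^N}|\nabla U_n|^2+\tfrac12\int_{\mathbb{R}^N}U_n^2\leq0$, forcing $U_n\equiv0$, a contradiction; hence $\inf_{U\in S}\|U\|_\infty>0$. For the upper bound I argue by contradiction through a blow-up: if $M_n:=\|U_n\|_\infty=U_n(0)\to\infty$, put $\lambda_n:=M_n^{-2/(N-2)}\to0$ and $V_n(x):=M_n^{-1}U_n(\lambda_n x)$, so $V_n(0)=\|V_n\|_\infty=1$, $\int_{\mathbb{R}^N}|\nabla V_n|^2=\int_{\mathbb{R}^N}|\nabla U_n|^2=Nc$ (this is the scaling that leaves the critical term invariant), and $V_n$ solves $-\Delta V_n+\lambda_n^2 V_n=M_n^{-(2^*-1)}f(M_nV_n)$, whose right-hand side is bounded in $L^\infty$ by $(F_1)$--$(F_2)$ and converges pointwise to $V^{2^*-1}$. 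Elliptic estimates give $V_n\to V$ in $C^2_{\text{loc}}$ with $V\geq0$, $V(0)=1$, $V\in D^{1,2}(\mathbb{R}^N)$ and $-\Delta V=V^{2^*-1}$ in $\mathbb{R}^N$; testing this with $V$ and using the Sobolev inequality yields $\int_{\mathbb{R}^N}|\nabla V|^2\geq S^{N/2}$, so by lower semicontinuity $S^{N/2}\leq\liminf\int_{\mathbb{R}^N}|\nabla V_n|^2=Nc$, i.e.\ $c\geq\tfrac1N S^{N/2}$, contradicting \cite{zz}. Hence $\sup_{U\in S}\|U\|_\infty<\infty$.

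For $(ii)$, given $U_n\in S$ the uniform $L^\infty$- and $H^1$-bounds give, along a subsequence, $U_n\rightharpoonup U$ in $H^1$ and $U_n\to U$ in $C^2_{\text{loc}}$, so $U$ solves \eqref{2.1}; since $U_n(0)=\|U_n\|_\infty$ stays bounded below by the lower bound in $(iii)$, $U(0)>0$ and $U\not\equiv0$, so $I(U)\geq c$, i.e.\ $\int_{\mathbb{R}^N}|\nabla U|^2\geq Nc$; together with weak lower semicontinuity this forces $\int_{\mathbb{R}^N}|\nabla U|^2=Nc$, and then Brezis--Lieb (a residual bubble would cost an extra $\tfrac1N S^{N/2}$, which is impossible) gives $U_n\to U$ in $H^1(\mathbb{R}^N)$, $I(U)=c$, $U(0)=\max U$, i.e.\ $U\in S$; thus $S$ is sequentially compact. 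Finally, for $(iv)$: every $U\in S$ is radial and decreasing, and the uniform bound on $\int_{\mathbb{R}^N}U^2$ together with radiality forces $\sup_{U\in S}U(x)\to0$ as $|x|\to\infty$, so there is $R_0>0$ independent of $U$ with $f(U)\leq\tfrac12 U$ on $\{|x|\geq R_0\}$; comparing $U$ with $C_\mu e^{-\mu|x|}$, a supersolution of $-\Delta w+\tfrac12 w=0$ outside $B_{R_0}$, gives $U(x)\leq C_\mu e^{-\mu|x|}$ for each $\mu\in(0,1)$ with $C_\mu$ independent of $U$, and interior Schauder estimates for \eqref{2.1} on unit balls transfer this decay to $|\nabla U|$.

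I expect the main obstacle to be the uniform upper bound in $(iii)$: one must carry out the blow-up/rescaling with the precise critical scaling, control the rescaled nonlinearity uniformly, recognize the limit as a nontrivial finite-energy solution of the critical equation, and convert the concentration of Dirichlet energy into the contradiction $c\geq\tfrac1N S^{N/2}$. Once this bound is in hand, $(i)$, $(ii)$ and $(iv)$ follow from fairly standard elliptic-regularity, compactness and comparison arguments.
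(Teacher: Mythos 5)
The paper does not prove Proposition~\ref{2.1.}; it merely records it with the citations \cite{bjm,bl3,bzz,b}, so there is no internal proof to compare against. Your reconstruction is nonetheless broadly the argument one finds in those references: Pohozaev gives $\int|\nabla U|^2=Nc$ for every $U\in S$, the lower $L^\infty$ bound follows from $(F_1)$ by testing, the upper bound from the critical blow-up $V_n(x)=M_n^{-1}U_n(M_n^{-2/(N-2)}x)$ and the strict inequality $c<\tfrac1N S^{N/2}$, radial symmetry from the symmetrization argument of \cite{bjm} (your mention of a "moving-plane method for continuous nonlinearities'' is not really available; only the symmetrization route works under $f\in C(\R,\R)$), and decay by comparison. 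The blow-up computation and the identity $\lambda_n^2 M_n^{-1}=M_n^{-(2^*-1)}$ are correct.

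Two steps, however, do not hold as written. First, in $(iv)$ you compare $U$ with $C_\mu e^{-\mu|x|}$ "a supersolution of $-\Delta w+\tfrac12 w=0$''; but $-\Delta(e^{-\mu r})+\tfrac12 e^{-\mu r}=\bigl(\tfrac12-\mu^2+\tfrac{(N-1)\mu}{r}\bigr)e^{-\mu r}$ is nonnegative for all large $r$ only when $\mu\le 1/\sqrt2$, so your argument yields the decay rate only for $\mu<1/\sqrt2$, not the full range $\mu\in(0,1)$ stated in the proposition. The fix is to use $(F_1)$ to write $f(U)\le(1-\mu^2)U$ on $\{|x|\ge R_\mu\}$ for the given $\mu$ (the threshold $R_\mu$ is uniform over $U\in S$ by the uniform tail bound you already derived), and compare with $-\Delta w+(1-\mu^2)w=0$, for which $e^{-\mu|x|}$ is a genuine supersolution. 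This is not cosmetic: the paper relies on decay with $\mu$ arbitrarily close to $1$ in \eqref{add10}, Lemma~\ref{3.2.} (with $\delta+\delta'<1$ arbitrary) and \eqref{3.125} (with $\delta_1>1$), so a cap at $1/\sqrt2$ would destroy the matching of upper and lower energy estimates in Step~3. Second, in $(ii)$ the claim that Brezis--Lieb together with "no residual bubble'' yields $U_n\to U$ in $H^1(\R^N)$ has a gap: matching the gradient energies does give $\nabla U_n\to\nabla U$ in $L^2$ and hence $U_n\to U$ in $L^{2^*}$, but this says nothing about $L^2$-mass escaping to infinity, and the $r^{-N/2}$ tail bound from radial monotonicity alone is not integrable in $L^2$. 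You need the uniform exponential decay to conclude tightness and hence $U_n\to U$ in $L^2$. Since your proof of $(iv)$ uses only $(i)$, $(iii)$ and the uniform $H^1$ bound, the remedy is simply to establish $(iv)$ before $(ii)$ and invoke it there. A minor further remark: "Hopf lemma'' is not really the tool for strict radial monotonicity (that is a boundary-normal-derivative statement); strict decrease for merely continuous $f$ requires the ODE/symmetrization argument of \cite{bjm}, which you could simply cite.
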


%\begin{proof}
%For the symmetry of $U \in S$, see \cite{bjm}. To prove $(i)$, it suffices to prove that for a radially symmetric function $U \in S$, $\frac{{dU(r)}}
%{{dr}} < 0$ for $r > 0$. This was proved in Proposition~2.1 of \cite{bl3}. $(ii)$ and $(iii)$ were proved in Proposition~2.1 of \cite{bzz}. $(iv)$ was proved in Proposition~3.1 of \cite{b}.
%\end{proof}

\begin{lemma}\label{2.2.}
(Lemma~2.1 of \cite{rww}) Let $R$ be a positive number and $\{ {u_n}\} _{n = 1}^\infty $ a bounded sequence in ${H^1}({\mathbb{R}^N})$$(N \ge 3)$. If
\[
\mathop {\lim }\limits_{n \to \infty } \mathop {\sup }\limits_{x \in {\mathbb{R}^N}} \int_{{B_R}(x)} {|{u_n}{|^{{2^ * }}}}  = 0,
\]
then ${u_n} \to 0$ in ${L^{{2^ * }}}({\mathbb{R}^N})$ as $n \to \infty $.
\end{lemma}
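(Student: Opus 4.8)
The plan is to deduce the conclusion directly from the Sobolev embedding $H^1\hookrightarrow L^{2^*}$, applied on a fixed mesh of balls of radius $R$, by splitting the exponent $2^*$ into an exponent‑$2$ part controlled by the $H^1$‑norm and a surplus exponent $2^*-2>0$ which absorbs the vanishing hypothesis. Conceptually this is the critical‑exponent counterpart of P.-L. Lions' vanishing lemma: in the subcritical range one upgrades $L^2$‑smallness on balls to $L^q$‑convergence ($q<2^*$) through a genuine Gagliardo--Nirenberg gain, whereas at the endpoint $q=2^*$ no such gain is available, and one instead exploits the scale‑invariant Sobolev inequality on each ball of the mesh.

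First I would fix a countable family of balls $\{B_R(x_i)\}_{i\geq 1}$ that covers $\mathbb{R}^N$ and has bounded overlap, i.e. every point of $\mathbb{R}^N$ lies in at most $N_0$ of them, where $N_0$ depends only on $N$ (for instance take the centres $x_i$ on the lattice $(R/\sqrt N)\,\mathbb{Z}^N$). Next, by the Sobolev embedding on a ball of radius $R$, which is translation invariant, there is a constant $C_R>0$, independent of $i$ and of the function, with
\[
\Bigl(\int_{B_R(x_i)}|v|^{2^*}\Bigr)^{2/2^*}\leq C_R\int_{B_R(x_i)}\bigl(|\nabla v|^2+v^2\bigr),\qquad v\in H^1\bigl(B_R(x_i)\bigr).
\]
Writing $\tfrac{2}{2^*}+\tfrac{2^*-2}{2^*}=1$, I would then estimate, for every $i$ and $n$,
\[
\int_{B_R(x_i)}|u_n|^{2^*}=\Bigl(\int_{B_R(x_i)}|u_n|^{2^*}\Bigr)^{2/2^*}\Bigl(\int_{B_R(x_i)}|u_n|^{2^*}\Bigr)^{(2^*-2)/2^*}\leq C_R\,\delta_n^{(2^*-2)/2^*}\int_{B_R(x_i)}\bigl(|\nabla u_n|^2+u_n^2\bigr),
\]
where $\delta_n:=\sup_{x\in\mathbb{R}^N}\int_{B_R(x)}|u_n|^{2^*}\to 0$ by assumption. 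Summing over $i$ and using the cover together with its bounded overlap on both sides,
\[
\int_{\mathbb{R}^N}|u_n|^{2^*}\leq\sum_i\int_{B_R(x_i)}|u_n|^{2^*}\leq C_R\,\delta_n^{(2^*-2)/2^*}\sum_i\int_{B_R(x_i)}\bigl(|\nabla u_n|^2+u_n^2\bigr)\leq N_0\,C_R\,\delta_n^{(2^*-2)/2^*}\,\|u_n\|_{H^1(\mathbb{R}^N)}^2 .
\]
Since $\{u_n\}$ is bounded in $H^1(\mathbb{R}^N)$, $\delta_n\to 0$, and $(2^*-2)/2^*>0$ — here is where $N\geq 3$, hence $2<2^*<\infty$, enters — the right‑hand side tends to $0$, which gives $u_n\to 0$ in $L^{2^*}(\mathbb{R}^N)$.

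The argument is essentially bookkeeping once the geometry is set up, so I do not expect a genuine obstacle; the only points that need a little care are the uniformity of the Sobolev constant $C_R$ over the mesh (immediate from translation invariance of balls) and the control of the overlap constant $N_0$ by a purely dimensional quantity. Note that the boundedness of $\{u_n\}$ in $H^1(\mathbb{R}^N)$, and not merely in $D^{1,2}(\mathbb{R}^N)$, is genuinely used, since on a bounded ball the embedding into $L^{2^*}$ requires the full $H^1$‑norm rather than $\|\nabla\,\cdot\,\|_{L^2}$ alone.
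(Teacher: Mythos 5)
Your argument is correct, and indeed the paper does not reprove this lemma; it simply cites it to Ramos--Wang--Willem, where the proof is carried out along exactly the same lines (a uniformly overlapping mesh of balls, the local Sobolev inequality on each ball, extracting the factor $\delta_n^{(2^*-2)/2^*}$ from the hypothesis, and summing). Your observations about why the full $H^1$-boundedness is needed (rather than $D^{1,2}$) and why $N\geq 3$ enters are accurate and worth keeping.
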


\vspace{.5cm}

\section{The singularly perturbed problem}

\subsection{Variational setting and Penalization} Letting $v(x) = u(\varepsilon x)$, \eqref{1.1} is equivalent to
\begin{equation}\label{3.1}
\left\{ \begin{gathered}
   - \Delta v + v = f(v){\text{ in }}{\Omega _\varepsilon }, \hfill \\
  v > 0{\text{ in }}{\Omega _\varepsilon },~v = 0{\text{ on }}\partial {\Omega _\varepsilon }, \hfill \\
\end{gathered}  \right.
\end{equation}
where ${\Omega _\varepsilon }: = \{ y \in {\mathbb{R}^N}:\varepsilon y \in \Omega \} $. The corresponding energy functional to \eqref{3.1} is
\[
{I_\varepsilon }(v): = \frac{1}
{2}\int_{{\Omega _\varepsilon }} {|\nabla v{|^2}}  + \frac{1}
{2}\int_{{\Omega _\varepsilon }} {{v^2}}  - \int_{{\Omega _\varepsilon }} {F(v)} ,~v \in H_0^1({\Omega _\varepsilon }).
\]
From now on, we define $\Lambda : = \mathop  \cup \nolimits_{k = 1}^K {\Lambda ^k}$ and
\[
{\mathcal{M}^k}: = \Bigl\{ {x \in {\Lambda ^k}:d(x,\partial \Omega ) = \mathop {\max }\limits_{x \in {\Lambda ^k}} d(x,\partial \Omega ): = {D_k}} \Bigr\},{\text{ }}k = 1,2,...K.
\]
Similar to \cite{dfw}, choose $R_0 > 0$ such that
\begin{equation}\label{add1}
\mathop {\max }\limits_{1 \le k \le K} \mathop {\max }\limits_{x \in {\Lambda ^k}} d(x,\partial \Omega ) < {R_0} < \frac{1}
{2}\mathop {\min }\limits_{i \ne j} d({\Lambda ^i},{\Lambda ^j})
\end{equation}
and let
\[
\widetilde{{\Lambda ^k}}: = \Bigl\{ {x \in \Omega :d(x,{\Lambda ^k}) < R_0}\Bigr\},~k = 1,2,...K.
\]
We note that for each $1 \le k \le K$, the set $\widetilde{{\Lambda ^k}}$ contains ${\Lambda ^k}$ compactly and the sets $\{ {\widetilde{{\Lambda ^k}}} \}_{k = 1}^K$ are mutually disjoint with $\mathop {\min }\nolimits_{i \ne j} d(\widetilde{{\Lambda ^i}},\widetilde{{\Lambda ^j}}) > 0$. Similar to \cite{df,df1,dfw,g}, we define the truncated function ${g_\varepsilon }(x,t)$ of $f(t)$ by
\[
{g_\varepsilon }(x,t): = {\chi _{{\Lambda _\varepsilon }}}(x)f(t) + (1 - {\chi _{{\Lambda _\varepsilon }}}(x))\tilde f(t)
\]
and set
\[
{G_\varepsilon }(x,t): = \int_0^t {{g_\varepsilon }(x,\tau )} d\tau,
\]
where
\[
\tilde f(t): = \min \Bigl\{ {\frac{1}
{2}{t^ + },f(t)} \Bigr\}.
\]
Similarly, we define
\[
g_\varepsilon ^k(x,t): = {\chi _{{{({\Lambda ^k})}_\varepsilon }}}(x)f(t) + (1 - {\chi _{{{({\Lambda ^k})}_\varepsilon }}}(x))\tilde f(t)
\]
and
\[
G_\varepsilon ^k(x,t): = \int_0^t {g_\varepsilon ^k(x,\tau )} d\tau  = {\chi _{{{({\Lambda ^k})}_\varepsilon }}}(x)F(t) + (1 - {\chi _{{{({\Lambda ^k})}_\varepsilon }}}(x))\tilde F(t),
\]
where $\tilde F(t) = \int_0^t {\tilde f(\tau )} d\tau $. Set the functionals  ${E_\varepsilon }$, $E_\varepsilon ^k$ $(k = 1,2,...K)$ by
\[
{E_\varepsilon }(v): = \frac{1}
{2}\int_{{\Omega _\varepsilon }} {|\nabla v{|^2}}  + \frac{1}
{2}\int_{{\Omega _\varepsilon }} {{v^2}}  - \int_{{\Omega _\varepsilon }} {{G_\varepsilon }(x,v)} ,~v \in H_0^1({\Omega _\varepsilon })
\]
and
\[
E_\varepsilon ^k(v): = \frac{1}
{2}\int_{{\Omega _\varepsilon }} {|\nabla v{|^2}}  + \frac{1}
{2}\int_{{\Omega _\varepsilon }} {{v^2}}  - \int_{{\Omega _\varepsilon }} {G_\varepsilon ^k(x,v)} ,~v \in H_0^1({\Omega _\varepsilon }).
\]
Inspired by \cite{bw,bj,bj1} but with some modifications, we define
\[
{\chi _\varepsilon }(x): = \left\{ \begin{array}{ll}
  0,&x \in {(\widetilde\Lambda )_\varepsilon }, \hfill \\
  {\varepsilon ^{ - 1}},&x \in {\Omega _\varepsilon }\backslash {(\widetilde\Lambda )_\varepsilon }, \hfill \\
\end{array}  \right.{\text{ }}{Q_\varepsilon }(v): = \left( {\int_{{\Omega _\varepsilon }} {{\chi _\varepsilon }{|v|^{{2^ * }}}}  - 1} \right)_ + ^2
\]
and
\[
\chi _\varepsilon ^k(x): = \left\{ \begin{array}{ll}
  0,&x \in {(\widetilde{{\Lambda ^k}})_\varepsilon }, \hfill \\
  {\varepsilon ^{ - 1}},&x \in {\Omega _\varepsilon }\backslash {(\widetilde{{\Lambda ^k}})_\varepsilon }, \hfill \\
\end{array}  \right.{\text{ }}Q_\varepsilon ^k(v): = \left( {\int_{{\Omega _\varepsilon }} {\chi _\varepsilon ^k|v{|^{{2^ * }}}}  - 1} \right)_ + ^2,
\]
where $\widetilde\Lambda : = \mathop  \cup \nolimits_{k = 1}^K \widetilde{{\Lambda ^k}}$. Finally, we define auxiliary functionals ${J_\varepsilon }$, $J_\varepsilon ^k$ $(k = 1,2,...K)$ on $H_0^1({\Omega _\varepsilon })$ by
\[{J_\varepsilon }(v): = {E_\varepsilon }(v) + {Q_\varepsilon }(v){\text{ and }}J_\varepsilon ^k(v): = E_\varepsilon ^k(v) + Q_\varepsilon ^k(v).\]

\noindent\textbf{Remark}: We note that, J. Byeon and Z.-Q. Wang \cite{bw} introduced the penalization functional by
\[
\Bigl( {{\varepsilon ^{ - 6/\mu }}\int_{{\mathbb{R}^N}\backslash {\Lambda _\varepsilon }} {{u^2}}  - 1} \Bigr)_ + ^\beta
\]
for some $\mu  > 0$, $1 < \beta  < q/2$ and $q \in (2,{2^*})$.  However, in the present paper, we construct multi-peak solutions to \eqref{1.1} with a critical nonlinearity $f$. In the process of ``gluing solution", we need to ensure that for $2 < p \le {2^ * }$,
$$
\int_{{\Omega _\varepsilon }\backslash {{(\widetilde\Lambda )}_\varepsilon }} {|{\xi _\varepsilon }(s){|^p}}  = o(1)
$$
(see Lemma~\ref{3.7.} for detail). For this purpose, we introduce ${Q_\varepsilon }$, $Q_\varepsilon ^k$ $(k = 1,2,...K)$ with some modifications. As far as we know, it seems the first attempt to adopt the nonlocal penalization method due to J. Byeon and Z-Q. Wang \cite{bw} to construct multi-peak solutions of singularly perturbed nonlinear Dirichlet problems. However, in the previous related works, only considered were solutions with concentration around local minimal, local maximal, saddle points and other type of  critical points to potential $V(x)$(See \cite{bj1,bt} and references therein).

\subsection{Local deformation}
For any $\Omega  \subset {\mathbb{R}^N}$ and $a > 0$, we denote
\[
{\Omega ^{ - a}}: = \{ {x \in \Omega :d(x,\partial \Omega ) > a} \}
\]
and
\[
{\Omega ^a}: = \{ x \in {\mathbb{R}^N}:d(x,\Omega ) \le a\}.
\]
For each $1 \le k \le K$, choosing a cut-off function ${\varphi _{\varepsilon ,k}}(x) \in C_c^\infty ({\mathbb{R}^N},[0,1])$ such that ${\varphi _{\varepsilon ,k}}(x) = 0$ for $x \notin (\widetilde{{\Lambda ^k}})_\varepsilon ^{ - 1/{\varepsilon ^{1/2}}}$, ${\varphi _{\varepsilon ,k}}(x) = 1$ for $x \in (\widetilde{{\Lambda ^k}})_\varepsilon ^{ - 2/{\varepsilon ^{1/2}}}$ and $|\nabla {\varphi _{\varepsilon ,k}}(x)| \le C{\varepsilon ^{1/2}}$. We will find a solution to \eqref{3.1} near the set
\[
{X_\varepsilon }: = \left\{ {\sum\limits_{k = 1}^K {{\varphi _{\varepsilon ,k}}(x){U^k}(x - ({z^k}/\varepsilon ))} :{z^k} \in  {{\Lambda ^k}} {\text{ and }}{U^k} \in S} \right\}.
\]
Moreover, for any $A \subset H_0^1({\Omega _\varepsilon })$ and $a>0$, we use the notation
\[
{A^a}: = \bigl\{ {u \in {H_0^1({\Omega _\varepsilon }) }:\mathop {\inf }\limits_{v \in A} {{\| {u - v} \|}_{{H_0^1({\Omega _\varepsilon })}}} \le a} \bigr\}.
\]
For each $1 \le k \le K$, choosing $U_*^k \in S$ and $z_*^k \in {\mathcal{M}^k}$, then for each $\varepsilon  > 0$, we define
\begin{equation}\label{3.2}
W_{\varepsilon ,t}^k(x): = {\varphi _{\varepsilon ,k}}(x)U_*^k((x - (z_*^k/\varepsilon ))/t),~t \in [0,+\infty),
\end{equation}
where we denote $W_{\varepsilon ,0}^k = \mathop {\lim }\nolimits_{t \to 0} W_{\varepsilon ,t}^k$ in $H_0^1({\Omega _\varepsilon })$ sense, then $W_{\varepsilon ,0}^k = 0$. Since for each $U \in S$,
\[
\begin{array}{ll}
  I(U(x/t))&\dis = I(U(x/t)) - \frac{1}
{N}P(U){t^N} \hfill \\
   &=\dis \Bigl( {\frac{1}
{2}{t^{N - 2}} - \frac{{N - 2}}
{{2N}}{t^N}} \Bigr)\int_{{\mathbb{R}^N}} {|\nabla U{|^2}}  \hfill \\
   &=\dis \Bigl( {\frac{1}
{2}{t^{N - 2}} - \frac{{N - 2}}
{{2N}}{t^N}} \Bigr)Nc, \hfill \\
\end{array}
\]
then we choose a $t_0 >0$ large which is independent of $U \in S$ such that $I(U(x/{t_0})) <  - 3$. Moreover, we see that
\begin{equation}\label{3.3}
J_\varepsilon ^k(W_{\varepsilon ,{t_0}}^k) = I(U_*^k(x/{t_0})) + o(1) <  - 2{\text{ for }}\varepsilon  > 0{\text{ small.}}
\end{equation}
We define
\[
\widetilde{{c_\varepsilon }}: = \mathop {\max }\limits_{s \in {{[0,1]}^K}} {J_\varepsilon }({\gamma _\varepsilon }(s)),
\]
where
\begin{equation}\label{3.4}
{\gamma _\varepsilon }(s): = \sum\limits_{k = 1}^K {W_{\varepsilon ,{s_k}{t_0}}^k} {\text{ for }}s \in ({s_1},{s_2},...,{s_K}) \in {[0,1]^K}.
\end{equation}
Furthermore, for each $1 \le k \le K$, we denote
\[
\widetilde{c_\varepsilon ^k}: = \mathop {\max }\limits_{t \in [0,{t_0}]} J_\varepsilon ^k(W_{\varepsilon ,t}^k).
\]
Similar to \cite{df1}, we have the following first estimates of $\widetilde{{c_\varepsilon }}$:
\begin{lemma}\label{3.1.}
(i) $\widetilde{{c_\varepsilon }} = \sum\limits_{k = 1}^K {\widetilde{c_\varepsilon ^k}}  = Kc + o(1)$;\\
(ii) $\mathop {\overline {\lim } }\limits_{\varepsilon  \to 0} \mathop {\max }\limits_{s \in \partial {{[0,1]}^k}} {J_\varepsilon }({\gamma _\varepsilon }(s)) \le Kc - \sigma$,\\
where $0 < \sigma  < c $ is a fixed number.
\end{lemma}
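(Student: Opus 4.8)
The plan is to exploit a single structural fact: for $t\in[0,t_0]$ and $\varepsilon$ small the test functions $W^k_{\varepsilon,t}$ are supported inside the dilated sets $(\widetilde{\Lambda^k})_\varepsilon$, which are mutually disjoint. This will simultaneously decouple the energy over the index $k$ and make the penalization terms vanish, reducing everything to the behavior of a single rescaled ground state. Concretely, first I would note that $\operatorname{supp}\varphi_{\varepsilon,k}\subset(\widetilde{\Lambda^k})_\varepsilon^{-1/\varepsilon^{1/2}}\subset(\widetilde{\Lambda^k})_\varepsilon$, so for $s\in[0,1]^K$ the functions $W^1_{\varepsilon,s_1t_0},\dots,W^K_{\varepsilon,s_Kt_0}$ have pairwise disjoint supports, all contained in $(\widetilde\Lambda)_\varepsilon$. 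Since $\chi_\varepsilon\equiv0$ on $(\widetilde\Lambda)_\varepsilon$ and $\chi^k_\varepsilon\equiv0$ on $(\widetilde{\Lambda^k})_\varepsilon$, this gives $Q_\varepsilon(\gamma_\varepsilon(s))=0$ and $Q^k_\varepsilon(W^k_{\varepsilon,t})=0$ identically; moreover, since the $(\Lambda^j)_\varepsilon$ with $j\ne k$ are disjoint from $(\widetilde{\Lambda^k})_\varepsilon$, the weights $\chi_{\Lambda_\varepsilon}$ and $\chi_{(\Lambda^k)_\varepsilon}$ agree on $\operatorname{supp}W^k_{\varepsilon,t}$, so $E_\varepsilon(W^k_{\varepsilon,t})=E^k_\varepsilon(W^k_{\varepsilon,t})$. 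Using additivity of $E_\varepsilon$ over disjoint supports I would then record
\[
J_\varepsilon(\gamma_\varepsilon(s))=E_\varepsilon(\gamma_\varepsilon(s))=\sum_{k=1}^K E^k_\varepsilon(W^k_{\varepsilon,s_kt_0})=\sum_{k=1}^K J^k_\varepsilon(W^k_{\varepsilon,s_kt_0}).
\]

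Next I would establish the expansion of a single bump: uniformly for $t\in[0,t_0]$,
\[
E^k_\varepsilon(W^k_{\varepsilon,t})=I\bigl(U^k_*(\cdot/t)\bigr)+o(1)\qquad(\varepsilon\to0).
\]
By $(H_1)$ the point $z^k_*\in\mathcal M^k$ cannot lie on $\partial\Lambda^k$, so $B(z^k_*,\rho_k)\subset\Lambda^k$ and $B(z^k_*,\delta_k)\subset\widetilde{\Lambda^k}$ for some $\rho_k,\delta_k>0$; hence for $\varepsilon$ small $\varphi_{\varepsilon,k}\equiv1$ and $\chi_{(\Lambda^k)_\varepsilon}\equiv1$ on the large ball $B(z^k_*/\varepsilon,\min(\rho_k,\delta_k)/\varepsilon)$ around which $U^k_*(\cdot/t)$ concentrates. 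Writing $W^k_{\varepsilon,t}=\varphi_{\varepsilon,k}\,U^k_*((\cdot-z^k_*/\varepsilon)/t)$ and using the uniform exponential bound $U^k_*(y)+|\nabla U^k_*(y)|\le C_\mu e^{-\mu|y|}$ of Proposition~\ref{2.1.}(iv) together with $|\nabla\varphi_{\varepsilon,k}|\le C\varepsilon^{1/2}$, I would bound the contributions of the transition layer $\{\nabla\varphi_{\varepsilon,k}\ne0\}$ and of $\Omega_\varepsilon\setminus(\Lambda^k)_\varepsilon$ (where $G^k_\varepsilon(x,\cdot)=\tilde F\le F$ and $F(s)\le C(s^2+s^{2^*})$ by $(F_1)$--$(F_2)$) by $O(e^{-c'/\varepsilon})$, uniformly in $t\in[0,t_0]$, and a change of variables $y=(\cdot-z^k_*/\varepsilon)/t$ would identify the remaining integrals with those of $I(U^k_*(\cdot/t))$ up to the same error. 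Combining this with $Q^k_\varepsilon(W^k_{\varepsilon,t})=0$, the identity $I(U^k_*(\cdot/t))=\bigl(\tfrac12t^{N-2}-\tfrac{N-2}{2N}t^N\bigr)Nc$ recalled above, the fact that $t\mapsto\tfrac12t^{N-2}-\tfrac{N-2}{2N}t^N$ attains its maximum $\tfrac1N$ on $[0,t_0]$ at $t=1$, and the choice of $t_0$ (for which \eqref{3.3} gives $J^k_\varepsilon(W^k_{\varepsilon,t_0})<-2$), I obtain
\[
\widetilde{c^k_\varepsilon}=\max_{t\in[0,t_0]}J^k_\varepsilon(W^k_{\varepsilon,t})=\max_{t\in[0,t_0]}I\bigl(U^k_*(\cdot/t)\bigr)+o(1)=c+o(1).
\]

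To conclude, since the $k$-th term of the decoupling formula depends only on $s_k$, the maximum over $[0,1]^K$ splits into a sum of one-dimensional maxima, so
\[
\widetilde{c_\varepsilon}=\sum_{k=1}^K\max_{s_k\in[0,1]}J^k_\varepsilon(W^k_{\varepsilon,s_kt_0})=\sum_{k=1}^K\widetilde{c^k_\varepsilon}=Kc+o(1),
\]
which gives (i). For (ii), any $s\in\partial[0,1]^K$ has a coordinate $s_k\in\{0,1\}$; the corresponding summand equals $J^k_\varepsilon(0)=0$ when $s_k=0$ and, by \eqref{3.3}, is $J^k_\varepsilon(W^k_{\varepsilon,t_0})<-2<0$ when $s_k=1$, hence is $\le0$, while the remaining at most $K-1$ summands are each $\le\widetilde{c^k_\varepsilon}=c+o(1)$. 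Therefore $\max_{s\in\partial[0,1]^K}J_\varepsilon(\gamma_\varepsilon(s))\le(K-1)c+o(1)$, so $\limsup_{\varepsilon\to0}\max_{s\in\partial[0,1]^K}J_\varepsilon(\gamma_\varepsilon(s))\le(K-1)c=Kc-c$, and (ii) holds with any fixed $\sigma\in(0,c)$.

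I expect the delicate point to be the uniform-in-$t$ energy expansion of the single bump: the errors produced by the cut-off $\varphi_{\varepsilon,k}$ and by replacing $F$ with its truncation $\tilde F$ outside $(\Lambda^k)_\varepsilon$ must be shown to be $o(1)$ uniformly for all $t\in[0,t_0]$ — including $t$ near $0$ — without help from a scaling argument. Here it is essential that the elements of $S$ decay exponentially (Proposition~\ref{2.1.}(iv)) and, crucially, that $(H_1)$ forces $z^k_*$ to be an interior point of $\Lambda^k$, so that $U^k_*(\cdot/t)$, which concentrates at $z^k_*/\varepsilon$, lives — up to an exponentially small tail — on a region where neither the cut-off nor the truncation is active.
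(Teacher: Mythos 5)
Your proof is correct, and it fills in precisely the detail the paper omits (the paper only remarks ``Similar to \cite{df1}'' and gives no argument). The key observations---that $\operatorname{supp} W^k_{\varepsilon,t}\subset(\widetilde{\Lambda^k})_\varepsilon$ are pairwise disjoint, that this forces all the penalization terms $Q_\varepsilon,Q^k_\varepsilon$ to vanish on $\gamma_\varepsilon(s)$ and makes $E_\varepsilon$ additive and coincide with $E^k_\varepsilon$ on each bump, and that the single-bump energy reduces to $I(U^k_*(\cdot/t))+o(1)$ uniformly on $[0,t_0]$ via the uniform exponential decay from Proposition~\ref{2.1.}(iv) combined with the interiority of $z^k_*$ coming from $(H_1)$---are exactly the ingredients one needs. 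Your treatment of the uniformity in $t$, including $t\to 0$, is appropriately careful (the $1/t$ factors in $\nabla W^k_{\varepsilon,t}$ are dominated by the exponential decay $e^{-\mu\rho/(\varepsilon t)}$ since the cut-off region is at distance $\gtrsim\rho/\varepsilon$ from $z^k_*/\varepsilon$). Part (ii) is clean: on $\partial[0,1]^K$ at least one summand is $\le 0$ by \eqref{3.3}, the rest are $\le c+o(1)$, giving $\limsup\le (K-1)c=Kc-c\le Kc-\sigma$ for any fixed $\sigma\in(0,c)$. This matches the standard decoupling argument the paper implicitly relies on.
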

From \eqref{3.3}, we define the following minimax value of $J_\varepsilon ^k$
\[
c_\varepsilon ^k: = \mathop {\inf }\limits_{\gamma  \in \Gamma _\varepsilon ^k} \mathop {\max }\limits_{t \in [0,1]} J_\varepsilon ^k(\gamma (t)),
\]
where the set of paths is defined by
\[
\Gamma _\varepsilon ^k: = \{ \gamma (t) \in C([0,1],H_0^1({\Omega _\varepsilon })):\gamma (0) = 0{\text{ and }}\gamma (1) = W_{\varepsilon ,{t_0}}^k\}.
\]
Next, we give the following more delicate upper estimate of $c_\varepsilon ^k$.

\begin{lemma}\label{3.2.}

For any $\delta  \in (0,1)$, there exists ${C_\delta } > 0$ such that for $\varepsilon >0$ small, there holds
\[
c_\varepsilon ^k \le \widetilde{c_\varepsilon ^k} \le c + {C_\delta }\exp \Bigl( { - 2\delta  \cdot \frac{{{D_k}-2{\varepsilon ^{1/2}}}}
{{\varepsilon t_\varepsilon ^k}}} \Bigr),{\text{ }}k = 1,2,...,K,
\]
where ${t_\varepsilon ^k}>0$ and satisfies $t_\varepsilon ^k \to 1$ as $\varepsilon \to 0$.
\end{lemma}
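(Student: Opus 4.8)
The first inequality is a direct consequence of the definitions, and I would dispose of it immediately: the map $s\mapsto W^k_{\varepsilon,st_0}$ is a continuous path in $H^1_0(\Omega_\varepsilon)$ (the rescaling $U(\,\cdot\,/t)$ depends continuously on $t\ge 0$ in $H^1$, with limit $0$ as $t\to 0^+$, and $\varphi_{\varepsilon,k}$ is a fixed bounded Lipschitz multiplier) joining $W^k_{\varepsilon,0}=0$ to $W^k_{\varepsilon,t_0}$, hence it lies in $\Gamma^k_\varepsilon$, and reparametrising $[0,1]\ni s\mapsto st_0\in[0,t_0]$ gives $c^k_\varepsilon\le\max_{s\in[0,1]}J^k_\varepsilon(W^k_{\varepsilon,st_0})=\widetilde{c^k_\varepsilon}$. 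For the second inequality the plan is to compare $J^k_\varepsilon(W^k_{\varepsilon,t})$ with $I\bigl(U^k_*(\,\cdot\,/t)\bigr)=\bigl(\tfrac12 t^{N-2}-\tfrac{N-2}{2N}t^N\bigr)Nc$, the function of $t$ computed above \eqref{3.3}, which on $[0,\infty)$ attains its maximum $c$ nondegenerately only at $t=1$. A preliminary remark is that $\operatorname{supp}W^k_{\varepsilon,t}\subset\operatorname{supp}\varphi_{\varepsilon,k}\subset(\widetilde{\Lambda^k})_\varepsilon$, where $\chi^k_\varepsilon\equiv 0$, so $Q^k_\varepsilon(W^k_{\varepsilon,t})=0$ and $J^k_\varepsilon(W^k_{\varepsilon,t})=E^k_\varepsilon(W^k_{\varepsilon,t})$ for all $t\ge0$ and all small $\varepsilon$.

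Next I would set $V_t(x):=U^k_*\bigl((x-z^k_*/\varepsilon)/t\bigr)$, so that $W^k_{\varepsilon,t}=\varphi_{\varepsilon,k}V_t$, and isolate the ``bulk''. Since $z^k_*\in\mathcal{M}^k$ has $d(z^k_*,\partial\Omega)=D_k<R_0$ (see \eqref{add1}), one checks $d\bigl(z^k_*/\varepsilon,\partial(\widetilde{\Lambda^k})_\varepsilon\bigr)=D_k/\varepsilon$, whence $\varphi_{\varepsilon,k}\equiv1$ on $B_{\rho_\varepsilon}(z^k_*/\varepsilon)$ with $\rho_\varepsilon:=(D_k-2\varepsilon^{1/2})/\varepsilon$, so $W^k_{\varepsilon,t}=V_t$ there. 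Moreover, fixing $\eta_0>0$ with $f(\sigma)\le\tfrac12\sigma$ — hence $\tilde f(\sigma)=f(\sigma)$ and $\tilde F(\sigma)=F(\sigma)$ — for $0\le\sigma\le\eta_0$, Proposition~\ref{2.1.}$(iii)$–$(iv)$ confines $\{V_t>\eta_0\}$ to a ball about $z^k_*/\varepsilon$ of radius $\le t_0R_1$ with $R_1$ independent of $\varepsilon$ and of $U^k_*\in S$; since $d(z^k_*,\partial\Lambda^k)>0$ is a fixed number, this ball lies inside $(\Lambda^k)_\varepsilon$ for $\varepsilon$ small, so that $G^k_\varepsilon(x,V_t)=F(V_t)$ for every $x$. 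It then follows that $E^k_\varepsilon(W^k_{\varepsilon,t})=I(V_t)+\mathcal{R}_\varepsilon(t)$, where $|\mathcal{R}_\varepsilon(t)|$ is bounded by the integral over $\Omega_\varepsilon\setminus B_{\rho_\varepsilon}(z^k_*/\varepsilon)$ of $|\nabla W^k_{\varepsilon,t}|^2+|W^k_{\varepsilon,t}|^2+|G^k_\varepsilon(x,W^k_{\varepsilon,t})|+|\nabla V_t|^2+V_t^2+|F(V_t)|$.

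To control $\mathcal{R}_\varepsilon(t)$ I would pick the decay exponent $\mu\in(\delta,1)$ in Proposition~\ref{2.1.}$(iv)$. On $\Omega_\varepsilon\setminus B_{\rho_\varepsilon}(z^k_*/\varepsilon)$ one then has $|W^k_{\varepsilon,t}|\le|V_t|\le C_\mu e^{-\mu|x-z^k_*/\varepsilon|/t}$, $|\nabla W^k_{\varepsilon,t}|\le C_\mu(\varepsilon^{1/2}+t^{-1})e^{-\mu|x-z^k_*/\varepsilon|/t}$ (using $|\nabla\varphi_{\varepsilon,k}|\le C\varepsilon^{1/2}$), and, since $|W^k_{\varepsilon,t}|\le\eta_0$ there, $|G^k_\varepsilon(x,W^k_{\varepsilon,t})|+|F(V_t)|\le C(|W^k_{\varepsilon,t}|^2+V_t^2)$. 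Integrating, and using $\int_{|y|>\rho_\varepsilon}e^{-2\mu|y|/t}\,dy\le C\rho_\varepsilon^{N-1}e^{-2\mu\rho_\varepsilon/t}$ for $\varepsilon$ small, gives $|\mathcal{R}_\varepsilon(t)|\le C\rho_\varepsilon^{N-1}e^{-2\mu\rho_\varepsilon/t}$ uniformly for $t$ in a fixed compact neighbourhood of $1$, and a cruder version of the same computation gives $J^k_\varepsilon(W^k_{\varepsilon,t})=\bigl(\tfrac12 t^{N-2}-\tfrac{N-2}{2N}t^N\bigr)Nc+o(1)$ uniformly on $[0,t_0]$. The latter forces the maximum defining $\widetilde{c^k_\varepsilon}$ to be attained at some $t^k_\varepsilon$ in a fixed neighbourhood of $1$, and, by the uniform convergence together with the nondegeneracy of the maximum of $t\mapsto(\tfrac12 t^{N-2}-\tfrac{N-2}{2N}t^N)Nc$ at $t=1$, $t^k_\varepsilon\to1$. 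Finally, since $\mu>\delta$ one has $\rho_\varepsilon^{N-1}e^{-2\mu\rho_\varepsilon/t}\le C_\delta e^{-2\delta\rho_\varepsilon/t}$ for such $t$; evaluating the sharp expansion at $t=t^k_\varepsilon$ and using $\bigl(\tfrac12(t^k_\varepsilon)^{N-2}-\tfrac{N-2}{2N}(t^k_\varepsilon)^N\bigr)Nc\le c$ yields $\widetilde{c^k_\varepsilon}\le c+C_\delta\exp\bigl(-2\delta\,\tfrac{D_k-2\varepsilon^{1/2}}{\varepsilon t^k_\varepsilon}\bigr)$, which is the assertion.

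I expect the main difficulty to be the error bookkeeping in the last two paragraphs rather than any single hard estimate. The delicate point is that $G^k_\varepsilon$ switches from $F$ to $\tilde F$ across $\partial(\Lambda^k)_\varepsilon$, which is in general \emph{closer} to $z^k_*/\varepsilon$ than $\partial(\widetilde{\Lambda^k})_\varepsilon$; one must use the \emph{uniform} (over $S$) $L^\infty$-bound and exponential decay of Proposition~\ref{2.1.} to keep $\{V_t>\eta_0\}$ inside a ball of $\varepsilon$-independent radius, well within $(\Lambda^k)_\varepsilon$, so that the mismatch $F-\tilde F$ never contributes to the bulk term. A second delicate point is absorbing the volume factor $\rho_\varepsilon^{N-1}$ into the exponential — this is exactly why one must give up the room between $\delta$ and $1$ and replace $\mu$ by $\delta$ in the final bound — and a third is that pinning $t^k_\varepsilon\to1$ needs the two-sided expansion to be uniform over all of $[0,t_0]$, not merely near $t=1$.
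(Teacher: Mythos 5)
Your proposal is correct and follows essentially the same route as the paper's proof: use $Q_\varepsilon^k(W_{\varepsilon,t}^k)=0$ to reduce to $E_\varepsilon^k$, compare with $I(U_*^k(\cdot/t))=(\tfrac12 t^{N-2}-\tfrac{N-2}{2N}t^N)Nc\le c$, and control the tail error via Proposition~\ref{2.1.}$(iv)$ with a decay exponent strictly between $\delta$ and $1$ so as to absorb the polynomial volume factor into the exponential. The only cosmetic difference is that the paper integrates explicitly over the cutoff annulus and invokes the sign $\tfrac12|U_*^k|^2-F(U_*^k)>0$ in the far field, whereas you bound $|\mathcal R_\varepsilon(t)|$ over the full complement of $B_{\rho_\varepsilon}(z_*^k/\varepsilon)$; the estimates and the choice $\rho_\varepsilon=(D_k-2\varepsilon^{1/2})/\varepsilon$ are the same.
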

\begin{proof}
By the exponential decay of $U_ * ^k$ and $\nabla U_ * ^k$, it is standard to prove this lemma. Similar results can be found in Proposition~4.1 of \cite{b}. But for readers' convenience and completeness, we give a detailed proof. Similar to \eqref{3.3}, we have
\begin{equation}\label{3.5}
c_\varepsilon ^k \le \widetilde{c_\varepsilon ^k} = \mathop {\max }\limits_{t \in [0,{t_0}]} I(U_*^k(x/t)) + o(1) = I(U_*^k) + o(1) = c + o(1).
\end{equation}
Letting $t_\varepsilon ^k \in [0,{t_0}]$ be such that
\[
\widetilde{c_\varepsilon ^k}: = \mathop {\max }\limits_{t \in [0,{t_0}]} J_\varepsilon ^k(W_{\varepsilon ,t}^k) = J_\varepsilon ^k(W_{\varepsilon ,t_\varepsilon ^k}^k),
\]
by \eqref{3.5}, we see that $t_\varepsilon ^k \to 1$ as $\varepsilon \to 0$. For any $\delta  \in (0,1)$, we choose $\delta ' = \delta '(\delta ) > 0$ such that $\delta  + \delta ' \in (0,1)$, by Proposition~\ref{2.1.} $(iv)$, we see that
\begin{equation}\label{add10}
U_ * ^k (x)+ |\nabla U_ * ^k(x)| \le {C_\delta }{e^{ - (\delta  + \delta ')|x|}}, x\in\mathbb{R}^N.
\end{equation}
In view of Proposition~\ref{2.1.} $(iii)$, \eqref{add10} and $(F_1)$, $(F_2)$, we see that
\begin{equation}\label{3.6}
\begin{gathered}
\quad  \int_{(\widetilde{{\Lambda ^k}} - z_*^k)_{\varepsilon t_\varepsilon ^k}^{ - 1/{\varepsilon ^{1/2}}t_\varepsilon ^k}\backslash (\widetilde{{\Lambda ^k}} - z_*^k)_{\varepsilon t_\varepsilon ^k}^{ - 2/{\varepsilon ^{1/2}}t_\varepsilon ^k}} {|\nabla U_*^k| + |U_*^k| + F(U_*^k)}  \hfill \\
   \le {C_\delta }\int_{(\widetilde{{\Lambda ^k}} - z_*^k)_{\varepsilon t_\varepsilon ^k}^{ - 1/{\varepsilon ^{1/2}}t_\varepsilon ^k}\backslash (\widetilde{{\Lambda ^k}} - z_*^k)_{\varepsilon t_\varepsilon ^k}^{ - 2/{\varepsilon ^{1/2}}t_\varepsilon ^k}} {{e^{ - 2(\delta  + \delta ')|x|}}}  \hfill \\
   \le {C_\delta }\exp \Bigl( { - 2(\delta  + \delta ')\cdot\Bigl( {\frac{{{D_k}}}
{{\varepsilon t_\varepsilon ^k}} - \frac{2}
{{\varepsilon ^{1/2}}{t_\varepsilon ^k}}} \Bigr)} \Bigr){\Bigl( {\frac{1}
{{\varepsilon t_\varepsilon ^k}}} \Bigr)^N} \hfill \\
   \le {C_\delta }\exp \Bigl( { - 2\delta \cdot\frac{{D_k}-2{\varepsilon ^{1/2}}}
{{\varepsilon t_\varepsilon ^k}}} \Bigr)\exp \Bigl( { - 2\delta '\cdot\frac{{D_k}-2{\varepsilon ^{1/2}}}
{{\varepsilon t_\varepsilon ^k}}} \Bigr){\Bigl( {\frac{1}
{{\varepsilon t_\varepsilon ^k}}} \Bigr)^N} \hfill \\
   \le {C_\delta }\exp \Bigl( { - 2\delta \cdot\frac{{D_k}-2{\varepsilon ^{1/2}}}
{{\varepsilon t_\varepsilon ^k}}} \Bigr) \hfill \\
\end{gathered}
\end{equation}
for $\varepsilon  \in (0,{\varepsilon _\delta })$ with some ${\varepsilon _\delta } > 0$. Note from $(F_1)$ that
\[
 \frac{1}
{2}|U_*^k{|^2} - F(U_*^k) > 0{\text{ for }}x \in {\mathbb{R}^N}\backslash (\widetilde{{\Lambda ^k}} - z_*^k)_{\varepsilon t_\varepsilon ^k}^{ - 2/{{\varepsilon ^{1/2}}}t_\varepsilon ^k}{\text{ and }}\varepsilon  > 0{\text{ small.}}
\]
then by \eqref{3.6}, we see that
\[\begin{gathered}
\quad  J_\varepsilon ^k(W_{\varepsilon ,t_\varepsilon ^k}^k) \hfill \\
   \le \int_{(\widetilde{{\Lambda ^k}} - z_*^k)_{\varepsilon t_\varepsilon ^k}^{ - 2/{\varepsilon ^{1/2}}t_\varepsilon ^k}}
 {\frac{1}
{2}|\nabla U_*^k| + \frac{1}
{2}|U_*^k| - F(U_*^k)}  + {C_\delta }\exp \Bigl( { - 2\delta  \cdot \frac{{D_k} - 2{\varepsilon ^{1/2}}}
{{\varepsilon t_\varepsilon ^k}}} \Bigr) \hfill \\
   \le I(U_*^k) + {C_\delta }\exp \Bigl( { - 2\delta  \cdot \frac{{D_k} - 2{\varepsilon ^{1/2}}}
{{\varepsilon t_\varepsilon ^k}}} \Bigr). \hfill \\
\end{gathered} \]
This proves the claim.
\end{proof}

\begin{proposition}\label{3.4.}
There exists ${d_0} > 0$ such that for any $\{ {\varepsilon _j}\} _{j = 1}^\infty $, $\{ {u_{{\varepsilon _j}}}\}  _{j = 1}^\infty$ satisfying $\mathop {\lim }\limits_{j \to \infty } {\varepsilon _j} = 0$, ${u_{{\varepsilon _j}}} \in X_{{\varepsilon _j}}^{{d_0}},\mathop {\lim }\limits_{j \to \infty } {J_{{\varepsilon _j}}}({u_{{\varepsilon _j}}}) \le Kc$ and $\mathop {\lim }\limits_{j \to \infty } {\| {{J'_{{\varepsilon _j}}}({u_{{\varepsilon _j}}})} \|_{{{(H_0^1({\Omega _{{\varepsilon _j}}}))}^{ - 1}}}} = 0$, there exist, up to a subsequence, $\{ y_{{\varepsilon _j}}^k\} _{j = 1}^\infty  \subset {({\Lambda ^k})_{{\varepsilon _j}}}$, ${U^k} \in S$ $(k = 1,2,...,K)$ such that
\[
\mathop {\lim }\limits_{j \to \infty } {\left\| {{u_{{\varepsilon _j}}} - \sum\limits_{k = 1}^K {{\varphi _{{\varepsilon _j},k}}(x){U^k}(x - y_{{\varepsilon _j}}^k)} } \right\|_{H_0^1({\Omega _{{\varepsilon _j}}})}} = 0.
\]
\end{proposition}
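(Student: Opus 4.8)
The plan is to run a concentration-compactness analysis on the (almost) Palais-Smale sequence $\{u_{\e_j}\}$ near $X_{\e_j}$, and show that each of the $K$ "bumps" carried by the cut-offs $\varphi_{\e_j,k}$ converges, after translation, to an element of $S$, while no mass leaks either to infinity, into the region where the penalization $Q_{\e_j}$ is active, or to the boundary $\partial\Omega_{\e_j}$. First I would fix $d_0$ small enough that any $u\in X_\e^{d_0}$ is, on each $\widetilde{\Lambda^k}_\e$, a small $H^1$-perturbation of a single translated ground state $\varphi_{\e,k}U^k(\cdot-z^k/\e)$ with $z^k\in\Lambda^k$; by definition of $X_\e$ this already gives the rough decomposition $u_{\e_j}=\sum_k \varphi_{\e_j,k}U^k_j(\cdot-z^k_j/\e_j)+o_{d_0}(1)$, the point being to upgrade $d_0$-closeness to genuine convergence. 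Since $\{u_{\e_j}\}$ is bounded in $H^1_0(\Omega_{\e_j})$ (the energy bound $J_{\e_j}(u_{\e_j})\le Kc+o(1)$ together with $(F_3)$ and the truncation $\tilde f$ give this — note $Q_{\e_j}\ge0$ so $E_{\e_j}(u_{\e_j})$ is bounded above, and boundedness then follows as in \cite{bzz}), I may extract weak limits. Along the way the energy bound $Kc$ is critical: it forbids more than $K$ concentration points and, via $c<\frac1N S^{N/2}$, forbids a standard bubble forming at the critical exponent.

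Next I would localize. For each $k$, pick a point $y^k_{\e_j}\in(\Lambda^k)_{\e_j}$ (essentially $z^k_j/\e_j$) and consider the translated sequence $u^k_j:=u_{\e_j}(\cdot+y^k_{\e_j})$ restricted to a large but fixed ball, or better on the expanding domain $\Omega_{\e_j}-y^k_{\e_j}$, which exhausts $\R^N$ because $\Lambda^k$ is compactly contained in $\Omega$ and $d(\Lambda^k,\partial\Omega)\ge D_k>0$, so the rescaled distance to the boundary is $\gtrsim D_k/\e_j\to\infty$. Thus the boundary disappears in the limit. Let $U^k$ be the weak $H^1(\R^N)$-limit of $u^k_j$; testing $J'_{\e_j}(u_{\e_j})\to0$ against cut-offs concentrated near $y^k_{\e_j}$ and using that $\chi_{(\Lambda^k)_\e}\equiv1$ there (so $g_{\e_j}$ reduces to $f$) and that the penalization term $Q'_{\e_j}$ contributes nothing inside $(\widetilde{\Lambda^k})_\e$ (where $\chi_\e=0$), I get that $U^k$ solves $-\Delta U^k+U^k=f(U^k)$ on $\R^N$. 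The nontriviality of each $U^k$ comes from the $d_0$-closeness: $\|u_{\e_j}-\varphi_{\e_j,k}U^k_j\|$ small forces $U^k\ne0$, and then $U^k$ is a positive solution, hence $I(U^k)\ge c$.

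The heart of the argument — and the step I expect to be the main obstacle — is the \emph{no-vanishing, no-dichotomy, full-strong-convergence} claim: that $\sum_{k=1}^K I(U^k)\le Kc$ with no residual mass, forcing $I(U^k)=c$ (so $U^k\in S$ up to the normalization $U^k(0)=\max U^k$, which I arrange by choosing $y^k_{\e_j}$ at a maximum point) and $u_{\e_j}-\sum_k\varphi_{\e_j,k}U^k(\cdot-y^k_{\e_j})\to0$ \emph{strongly} in $H^1_0(\Omega_{\e_j})$. Here I would split $\Omega_{\e_j}$ into the $K$ pieces $(\widetilde{\Lambda^k})_{\e_j}$ and the complement $\Omega_{\e_j}\setminus(\widetilde\Lambda)_{\e_j}$. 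On the complement the penalization $Q_{\e_j}$ acts with weight $\chi_\e=\e^{-1}$: if $\int_{\Omega_\e\setminus(\widetilde\Lambda)_\e}|u_{\e_j}|^{2^*}$ did not tend to $0$, then $Q_{\e_j}(u_{\e_j})$ would be bounded below by a positive constant while $J'_{\e_j}(u_{\e_j})\to0$ forces $Q'_{\e_j}(u_{\e_j})[u_{\e_j}]=o(1)$, and one computes $Q'_{\e_j}(u_{\e_j})[u_{\e_j}]=2^*\bigl(\int\chi_\e|u_{\e_j}|^{2^*}-1\bigr)_+\!\int\chi_\e|u_{\e_j}|^{2^*}$, giving a contradiction unless $\int\chi_\e|u_{\e_j}|^{2^*}\le1+o(1)$; combined with $\chi_\e=\e^{-1}\to\infty$ this yields $\int_{\Omega_\e\setminus(\widetilde\Lambda)_\e}|u_{\e_j}|^{2^*}=o(1)$, and then $-\Delta u+u=\tilde f(u)$ there with $\tilde f(t)\le\frac12 t^+$ (so a linear subcritical sink) forces $u_{\e_j}\to0$ in $H^1$ of the complement. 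For the vanishing of residual mass on each $(\widetilde{\Lambda^k})_{\e_j}$ beyond the single bump, I would invoke Lemma~\ref{2.2.} to rule out vanishing in $L^{2^*}$ of any leftover piece, the concentration-compactness principle II of P.~L.~Lions to rule out a concentrating critical bubble (which would cost at least $\frac1N S^{N/2}>c$, violating the total budget $Kc$ after subtracting the $K$ ground states), and a Liouville-type result to rule out a nonzero non-concentrating leftover solving the limit equation with energy adding to the budget. Assembling these, the energies add up exactly to $Kc$, each $U^k$ is a ground state, and the difference goes to zero strongly; relabeling $U^k$ to lie in $S$ and noting $y^k_{\e_j}\in(\Lambda^k)_{\e_j}$ completes the proof.
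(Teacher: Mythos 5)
Your plan follows the same general strategy as the paper's proof (bounded sequence near $X_\varepsilon$, translate around each bump, extract weak limits solving limit problems, then exploit the energy budget $Kc$ and the strict inequality $c<\frac1N S^{N/2}$ to force each limit to be a ground state and to forbid leftover mass or a concentrating critical bubble). However, two points need correction, and the first is a genuine gap.

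First and foremost, you repeatedly assert that near each translate $y^k_{\varepsilon_j}$ one has $\chi_{(\Lambda^k)_\varepsilon}\equiv 1$ and the whole-space equation emerges, and that ``the boundary disappears in the limit.'' Neither is automatic. The points $y^k_{\varepsilon_j}\approx z^k_j/\varepsilon_j$ with $z^k_j\in\Lambda^k$ can very well remain within a bounded rescaled distance of $\partial(\Lambda^k)_{\varepsilon_j}$; in that case the truncation boundary survives in the blow-up, and the weak limit solves a half-space problem of the form $-\Delta v+v=\chi_{\mathbb R^N_+}f(v)+(1-\chi_{\mathbb R^N_+})\tilde f(v)$, not \eqref{2.1}. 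The same issue occurs when ruling out residual mass or bubbles in the annular region between the bump centers and $\partial(\widetilde{\Lambda^k})_\varepsilon$: for a nontrivial subcritical limit near the truncation interface one needs a Pohozaev-type identity for the truncated equation to conclude $\tilde I(v)>c$ whenever $\tilde f< f$ somewhere on $\mathbb R^N\setminus\mathbb R^N_+$, and for a critical bubble near the interface one needs the Pohozaev non-existence theorem for $-\Delta w=|w|^{2^*-2}w$ on a half-space with Dirichlet data. None of this appears in your sketch; the ``Liouville-type result'' you invoke only covers the easy case where the limit is harmonic. These half-space arguments are precisely what forces the concentration points into $\Lambda^k$, which is the geometric content of the proposition, so omitting them is a real gap rather than a suppressed routine detail.

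Second, a smaller but still incorrect step: you argue that $J'_{\varepsilon_j}(u_{\varepsilon_j})\to0$ forces $Q'_{\varepsilon_j}(u_{\varepsilon_j})[u_{\varepsilon_j}]=o(1)$. This does not follow, because $J'_\varepsilon=E'_\varepsilon+Q'_\varepsilon$ and there is no sign information on $\langle E'_\varepsilon(u),u\rangle$ that would let you isolate the nonnegative $Q'_\varepsilon$ term. The conclusion you want (namely $\int_{\Omega_\varepsilon\setminus(\widetilde\Lambda)_\varepsilon}|u_\varepsilon|^{2^*}=O(\varepsilon)$) is correct, but the right way to get it is through the energy level rather than the derivative: $X_\varepsilon^{d_0}$ is bounded in $H^1$, so $E_\varepsilon$ is bounded below there, and since $J_\varepsilon(u_\varepsilon)\le Kc+o(1)$ this makes $Q_\varepsilon(u_\varepsilon)$ bounded, hence $\int\chi_\varepsilon|u_\varepsilon|^{2^*}\le C$ and therefore $\int_{\Omega_\varepsilon\setminus(\widetilde\Lambda)_\varepsilon}|u_\varepsilon|^{2^*}\le C\varepsilon$. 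In fact in the paper's actual proof of this proposition the penalization plays only a modest role: it is the $d_0$-closeness to $X_\varepsilon$, together with the exponential decay of elements of $S$, that kills the mass outside $(\widetilde\Lambda)_\varepsilon$, rather than the weight $\chi_\varepsilon=\varepsilon^{-1}$ per se.
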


\begin{proof}
For notational simplicity, we write $\varepsilon $ for ${\varepsilon _j}$ and still use $\varepsilon $ after taking a subsequence. By the definition of $X_\varepsilon ^{{d_0}}$ and the compactness of  $S$, for each $1 \le k \le K$, there exist ${\{ x_\varepsilon ^k\} _{\varepsilon  > 0}} \subset {\Lambda ^k}$ and ${U^k} \in S$ such that

\begin{equation}\label{3.8}
{\left\| {{u_\varepsilon } - \sum\limits_{k = 1}^K {{\varphi _{\varepsilon ,k}}(x){U^k}(x - (x_\varepsilon ^k/\varepsilon ))} } \right\|_{H_0^1({\Omega _\varepsilon })}} \le 2{d_0}.
\end{equation}
In the following of the proof, we regard $u \in H_0^1({\Omega _\varepsilon })$ as a member of $u \in {H^1}({\mathbb{R}^N})$ by defining $u=0$ outside ${\Omega _\varepsilon }$ if necessary. Next, we divide the proof into the following steps.\\
\textbf{Step~1}. We claim that
\begin{equation}\label{3.9}
\mathop {\lim }\limits_{\varepsilon  \to 0} \mathop {\sup }\limits_{y \in {A_\varepsilon }} \int_{{B_1}(y)} {(u_\varepsilon ^ + )^{{2^*}}} = 0,
\end{equation}
where ${A_\varepsilon } = \mathop  \cup \nolimits_{k = 1}^K ((\widetilde{{\Lambda ^k}})_\varepsilon ^{ - 1/(2{\varepsilon ^{1/2}})}\backslash {B_{\beta /2\varepsilon }}(x_\varepsilon ^k/\varepsilon ))$ and $\beta >0$ is small but fixed.

Assuming on the contrary that there exists $r>0$ such that
\[
\mathop {\underline {\lim } }\limits_{\varepsilon  \to 0} \mathop {\sup }\limits_{y \in {A_\varepsilon }} \int_{{B_1}(y)} {(u_\varepsilon ^ + )^{{2^*}}}= 2r > 0,
\]
then there exists ${y_\varepsilon } \in {A_\varepsilon }$ such that for $\varepsilon  > 0$ small,
\begin{equation}\label{3.10}
\int_{{B_1}({y_\varepsilon })} {(u_\varepsilon ^ + )^{{2^*}}}  \ge r > 0.
\end{equation}
Letting ${v_\varepsilon }(x): = {u_\varepsilon }(x + {y_\varepsilon })$, up to a subsequence, there exist $v \in {H^1}({\mathbb{R}^N})$ and $1 \le {k_0} \le K$ such that ${v_\varepsilon } \rightharpoonup v$ in ${H^1}({\mathbb{R}^N})$ and $\varepsilon {y_\varepsilon } \to {y_0} \in \overline {\widetilde{{\Lambda ^{{k_0}}}}} $ as $\varepsilon  \to 0$. In the following, we consider two cases.

{\bf Case~1.} $v \ne 0$. We have the following subcases.

{\bf Case~1-1.} $\varepsilon {y_\varepsilon } \in {\Lambda ^{{k_0}}}$ and $\mathop {\lim }\nolimits_{\varepsilon  \to 0} d({y_\varepsilon },\partial {({\Lambda ^{{k_0}}})_\varepsilon }) =  + \infty $. We see that for any compact set $K$, $\varepsilon K + \varepsilon {y_\varepsilon } \subset {\Lambda ^{{k_0}}}$ for $\varepsilon > 0$ small. Thus, for each $\varphi (x) \in C_c^\infty ({\mathbb{R}^N})$, $\varphi (x - {y_\varepsilon }) \in C_c^\infty ({({\Lambda ^{{k_0}}})_\varepsilon })$ for $\varepsilon > 0$ small, then
\[
\mathop {\lim }\limits_{\varepsilon  \to 0} \left\langle {{J'_\varepsilon }({u_\varepsilon }),\varphi (x - {y_\varepsilon })} \right\rangle  = 0
\]
implies that $0 < v \in {H^1}({\mathbb{R}^N})$ and satisfies \eqref{2.1}. Using Pohozaev's identity \eqref{add12}, we see that
\begin{equation}\label{3.11}
I(v) = I(v) - \frac{1}
{N}P(v) = \frac{1}
{N}\int_{{\mathbb{R}^N}} {|\nabla v{|^2}}  \ge c.
\end{equation}

{\bf Case~1-2.} $\mathop {\lim }\nolimits_{\varepsilon  \to 0} d({y_\varepsilon },\partial {({\Lambda ^{{k_0}}})_\varepsilon }) <  + \infty $. Up to a translation and a rotation, we see that $v \in {H^1}({\mathbb{R}^N})$ and satisfies
\begin{equation}\label{3.12}
 - \Delta v + v = {\chi _{\mathbb{R}_ + ^N}}f(v) + (1 - {\chi _{\mathbb{R}_ + ^N}})\tilde f(v){\text{ in }}{\mathbb{R}^N}.
\end{equation}
The corresponding energy functional to \eqref{3.12} is
\[
\tilde I(v) = \frac{1}
{2}\int_{{\mathbb{R}^N}} {|\nabla v{|^2}}  + \frac{1}
{2}\int_{{\mathbb{R}^N}} {{v^2}}  - \int_{{\mathbb{R}^N}} {G(x,v)} ,{\text{ }}v \in {H^1}({\mathbb{R}^N}),
\]
where
\[
G(x,v) = {\chi _{\mathbb{R}_ + ^N}}(x)F(v) + (1 - {\chi _{\mathbb{R}_ + ^N}}(x))\tilde F(v){\text{ and }}\tilde F(v) = \int_0^v {\tilde f(\tau )} d\tau.
\]
Moreover, $v$ satisfies the following Pohozaev's type identity
\[
\tilde P(v) = \frac{{N - 2}}
{2}\int_{{\mathbb{R}^N}} {|\nabla v{|^2}}  + N\Bigl( {\frac{1}
{2}\int_{{\mathbb{R}^N}} {{v^2}}  - \int_{{\mathbb{R}^N}} {G(x,v)} } \Bigr) = 0.
\]
Now, we divide\textbf{ Case~1-2} into the following two subcases.

{\bf Case~1-2-1.} If $\tilde f(v(x)) < f(v(x))$ for some $x \in {\mathbb{R}^N}\backslash \mathbb{R}_ + ^N$, arguing as in Proposition~2.2 of \cite{b}, we see that
\[
\tilde I(v) = \tilde I(v) - \frac{1}
{N}\tilde P(v) = \frac{1}
{N}\int_{{\mathbb{R}^N}} {|\nabla v{|^2}}  > c.
\]

{\bf Case~1-2-2.} If $\tilde f(v(x)) = f(v(x))$ for all $x \in {\mathbb{R}^N}\backslash \mathbb{R}_ + ^N$, we see that $v$ satisfies \eqref{2.1} and
\[
I(v) = \frac{1}
{N}\int_{{\mathbb{R}^N}} {|\nabla v{|^2}}  \ge c.
\]
Next, we divide \textbf{Case~1-2-2} into the following two subcases.
\begin{itemize}
\item \textbf{Case~1-2-2-1.} $I(v) > c$.
\item \textbf{Case~1-2-2-2.} $I(v) = c$, then $v$ is a ground state solution to \eqref{2.1}, i.e. $\exists U \in S$, ${z_0} \in {\mathbb{R}^N}$ such that $v(x) = U(x - {z_0})$. Note that $ - \Delta U(0) \ge 0$, then $f(U(0)) \ge U(0) > \frac{1}
{2}U(0)$, it follows that, in fact, ${z_0} \in \mathbb{R}_ + ^N$.
\end{itemize}

{\bf Case~1-3.} $\varepsilon {y_\varepsilon } \in {(\widetilde{{\Lambda ^{{k_0}}}})^{ - 1/(2{\varepsilon ^{1/2}})}}\backslash {\Lambda ^{{k_0}}}$ and $\mathop {\lim }\nolimits_{\varepsilon  \to 0} d({y_\varepsilon },\partial {({\Lambda ^{{k_0}}})_\varepsilon }) =  + \infty $. We see that for any compact set $K$, $\varepsilon K + \varepsilon {y_\varepsilon } \subset \widetilde{{\Lambda ^{{k_0}}}}$ for $\varepsilon > 0$ small. Thus, for each $\varphi  \in C_c^\infty ({\mathbb{R}^N})$, it holds that
\[
\mathop {\lim }\limits_{\varepsilon  \to 0} \left\langle {{J'_\varepsilon }({u_\varepsilon }),\varphi (x - {y_\varepsilon })} \right\rangle  = 0,
\]
which implies that $0 \le v \in {H^1}({\mathbb{R}^N})$ and satisfies
\[
 - \Delta v + v = \tilde f(v){\text{ in }}{\mathbb{R}^N}.
\]
Note that $\tilde f(v) \le \frac{1}
{2}v$, we see that $v=0$, which contradicts $v \ne 0$. \textbf{Case~1-3} is impossible.

To sum up, we see that
\[
\int_{{\mathbb{R}^N}} {|\nabla v{|^2}}  \ge Nc,
\]
which implies that for some $R>0$ large but fixed,
\begin{align}\label{3.13}
\lim\limits_{\varepsilon  \to 0} \int_{{B_R}({y_\varepsilon })} {|\nabla {u_\varepsilon }{|^2}}  = \mathop {\underline {\lim } }\limits_{\varepsilon  \to 0} \int_{{B_R}(0)} {|\nabla {v_\varepsilon }{|^2}}
\ge \int_{{B_R}(0)} {|\nabla v{|^2}}  \ge \frac{1}
{2}\int_{{\mathbb{R}^N}} {|\nabla v{|^2}}  \ge \frac{N}
{2}c > 0.
\end{align}
On the other hand, by \eqref{3.8}, we have
\begin{equation}\label{3.14}
\begin{array}{ll}
  \dis\int_{{B_R}({y_\varepsilon })} {|\nabla {u_\varepsilon }{|^2}}  & \dis \le \int_{{B_R}({y_\varepsilon })} {|\nabla ({\varphi _{\varepsilon ,{k_0}}}(x){U^{{k_0}}}(x - (x_\varepsilon ^{{k_0}}/\varepsilon ))){|^2}}  + C{d_0} \hfill \\
   &\dis \le C\int_{{B_R}({y_\varepsilon } - (x_\varepsilon ^{{k_0}}/\varepsilon ))} {|{U^{{k_0}}}{|^2}}  + C\int_{{B_R}({y_\varepsilon } - (x_\varepsilon ^{{k_0}}/\varepsilon ))} {|\nabla {U^{{k_0}}}{|^2}}  + C{d_0} \hfill \\
   &\dis = C{d_0} + o(1), \hfill \\
\end{array}
\end{equation}
where $o(1) \to 0$ as $\varepsilon  \to 0$ and we have used the fact that  $|{y_\varepsilon } - (x_\varepsilon ^{{k_0}}/\varepsilon )| \ge \beta /2\varepsilon $. We see that \eqref{3.14} contradicts \eqref{3.13} if $d_0 >0$ is small.

{\bf Case~2.} $v = 0$, i.e. ${v_\varepsilon } \rightharpoonup 0$ in ${H^1}({\mathbb{R}^N})$ as $\varepsilon \to 0$. Let ${\varphi _0}(x) \in C_c^\infty ({\mathbb{R}^N},[0,1])$ satisfying ${\varphi _0}(x) = 1$ for $x \in {B_1}(0)$, ${\varphi _0}(x) = 0$ for $x \in {\mathbb{R}^N}\backslash {B_2}(0)$ and $|\nabla {\varphi _0}(x)| \le C$. By the concentration compactness principle II (Lemma~I.1 of \cite{l}), we obtain an at most countable index set $\Gamma$, sequence ${\{ {x_j}\} _{j \in \Gamma }} \subset {\mathbb{R}^N}$, and ${\{ {\nu _j}\} _{j \in \Gamma }} \subset (0,+\infty )$ such that
\[
{({\varphi _0}v_\varepsilon ^ + )^{{2^*}}} \rightharpoonup \sum\limits_{j \in \Gamma } {{\nu _j}{\delta _{{x_j}}}} {\text{ as }}\varepsilon  \to 0.
\]
Then, there is at least one ${j_0} \in \Gamma $ such that ${x_{{j_0}}} \in \overline {{B_1}(0)} $ with ${\nu_{{j_0}}} > 0$. Otherwise, ${v_\varepsilon ^ +} \to 0$ in ${L^{{2^ * }}}({B_1}(0))$, which contradicts \eqref{3.10}.

Inspired by \cite{s,y}, we define the concentration function
\[
{G_\varepsilon }(r) = \mathop {\sup }\limits_{x \in \overline {{B_1}(0)} } \int_{{B_r}(x)} {{{(v_\varepsilon ^ + )}^{{2^*}}}} .
\]
Fixing a small $\tau  \in (0,{S^{N/2}})$ and choosing ${\sigma _\varepsilon } = {\sigma _\varepsilon }(\tau ) > 0$, ${z_\varepsilon } \in \overline {{B_1}(0)} $ such that
\begin{equation}\label{3.15}
\int_{{B_{{\sigma _\varepsilon }}}({z_\varepsilon })} {{{(v_\varepsilon ^ + )}^{{2^*}}}}  = {G_\varepsilon }({\sigma _\varepsilon }) = \tau .
\end{equation}
Letting ${w_\varepsilon }(y) = \sigma _\varepsilon ^{(N - 2)/2}v_\varepsilon ({\sigma _\varepsilon }y + {z_\varepsilon })$, we see that
\begin{equation}\label{3.16}
{{\tilde G}_\varepsilon }(r): = \mathop {\sup }\limits_{x \in \overline {{B_{1/{\sigma _\varepsilon }}}( - {z_\varepsilon }/{\sigma _\varepsilon })} } \int_{{B_r}(x)} {(w_\varepsilon ^ + )^{{2^*}}}
  = \mathop {\sup }\limits_{x \in \overline {{B_1}(0)} } \int_{{B_{{\sigma _\varepsilon }r}}(x)} {{{(v_\varepsilon ^ + )}^{{2^*}}}}  = {G_\varepsilon }({\sigma _\varepsilon }r).
\end{equation}
\eqref{3.15} and \eqref{3.16} imply that
\begin{equation}\label{3.17}
{{\tilde G}_\varepsilon }(1) = \int_{{B_1}(0)} {(w_\varepsilon ^ + )^{{2^*}}}  = \int_{{B_{{\sigma _\varepsilon }}}({z_\varepsilon })} {{{(v_\varepsilon ^ + )}^{{2^*}}}}  = {G_\varepsilon }({\sigma _\varepsilon }) = \tau .
\end{equation}

Next, we prove that there is $\tau  \in (0,{S^{N/2}})$ small, such that, up to a subsequence, ${\sigma _\varepsilon }(\tau ) \to 0$ as $\varepsilon  \to 0 $. Otherwise, for any $\delta  > 0$, there exists ${M_\delta } > 0$ such that ${\sigma _\varepsilon }(\delta ) \ge {M_\delta }$, then
\[
\int_{{B_{{M_\delta }}}({x_{{j_0}}})} {{{({\varphi _0}v_\varepsilon ^ + )}^{{2^*}}}}  \le \mathop {\sup }\limits_{x \in \overline {{B_1}(0)} } \int_{{B_{{\sigma _\varepsilon }(\delta )}}(x)} {{{(v_\varepsilon ^ + )}^{{2^*}}}}  = {G_\varepsilon }({\sigma _\varepsilon }(\delta )) = \delta .
\]
In particular,
\begin{equation}\label{3.18}
{\nu _{{j_0}}} \le \int_{{B_{{M_\delta }}}({x_{{j_0}}})} {{{({\varphi _0}v_\varepsilon ^ + )}^{{2^*}}}}  + o(1) \le \delta  + o(1),
\end{equation}
where $o(1) \to 0$ as $\varepsilon \to 0 $. Letting $\varepsilon \to 0 $ and $\delta  \to 0$ in \eqref{3.18}, we see that ${\nu _{{j_0}}} \le 0$, which contradicts ${\nu _{{j_0}}} > 0$.

Since
\[
\int_{{\mathbb{R}^N}} {|\nabla {w_\varepsilon}{|^2}}  = \int_{{\mathbb{R}^N}} {|\nabla v_\varepsilon {|^2}}
\]
and ${\{ {v_\varepsilon }\} _{\varepsilon  > 0}}$ is bounded in  ${H^1}({\mathbb{R}^N})$, up to a subsequence, there exists $ w \in {D^{1,2}}({\mathbb{R}^N})$ such that
\begin{equation}\label{3.19}
{w_\varepsilon } \rightharpoonup w{\text{ in }}{D^{1,2}}({\mathbb{R}^N}).
\end{equation}
For any $\varphi  \in C_c^\infty ({\mathbb{R}^N})$, we denote ${{\tilde \varphi }_\varepsilon }(x): = \sigma _\varepsilon ^{(2 - N)/2}\varphi ((x - {z_\varepsilon })/{\sigma _\varepsilon })$. Note that ${\sigma _\varepsilon} \to 0$ and ${z_\varepsilon } \in \overline {{B_1}(0)} $ imply that ${{\tilde \varphi }_\varepsilon }(x) \in C_c^\infty ({B_2}(0))$ for $\varepsilon > 0$ small. Then, we see from the fact ${J'_\varepsilon }({u_\varepsilon }) \to 0$ as $\varepsilon \to 0$ that
\begin{equation}\label{add3}
\left\langle {{J'_\varepsilon }({u_\varepsilon }),{{\tilde \varphi }_\varepsilon }(x - {y_\varepsilon })} \right\rangle  = o(1){\| {{{\tilde \varphi }_\varepsilon }(x - {y_\varepsilon })} \|_{{H^1}({\mathbb{R}^N})}} = o(1){\| \varphi  \|_{{H^1}({\mathbb{R}^N})}},
\end{equation}
where
\begin{equation}\label{add4}
\begin{gathered}
 \quad \left\langle {{J'_\varepsilon }({u_\varepsilon }),{{\tilde \varphi }_\varepsilon }(x - {y_\varepsilon })} \right\rangle  \hfill \\
   = \int_{{\Omega _\varepsilon } - {y_\varepsilon }} {\nabla {v_\varepsilon } \cdot \nabla {{\tilde \varphi }_\varepsilon }}  + \int_{{\Omega _\varepsilon } - {y_\varepsilon }} {{v_\varepsilon }{{\tilde \varphi }_\varepsilon }}  \hfill \\
\quad   - \int_{{\Omega _\varepsilon } - {y_\varepsilon }} {{\chi _{{\Lambda _\varepsilon }}}(x + {y_\varepsilon })f({v_\varepsilon }){{\tilde \varphi }_\varepsilon }}  - \int_{{\Omega _\varepsilon } - {y_\varepsilon }} {(1 - {\chi _{{\Lambda _\varepsilon }}}(x + {y_\varepsilon }))\tilde f({v_\varepsilon }){{\tilde \varphi }_\varepsilon }}. \hfill \\
\end{gathered}
\end{equation}
We note that
\begin{equation}\label{add5}
\begin{array}{ll}
\dis  \Bigl| {\int_{{\Omega _\varepsilon } - {y_\varepsilon }} {{v_\varepsilon }{{\tilde \varphi }_\varepsilon }} } \Bigr| &\le\dis {\Bigl( {\int_{{\Omega _\varepsilon } - {y_\varepsilon }} {|{v_\varepsilon }{|^2}} } \Bigr)^{1/2}}{\Bigl( {\int_{{\Omega _\varepsilon } - {y_\varepsilon }} {|{{\tilde \varphi }_\varepsilon }{|^2}} } \Bigr)^{1/2}} \hfill \\
  &\dis \le C{\sigma _\varepsilon }{\Bigl( {\int_{{\Omega _{\varepsilon {\sigma _\varepsilon }}} - (({y_\varepsilon } + {z_\varepsilon })/\varepsilon )} {|\varphi {|^2}} } \Bigr)^{1/2}} \le o(1){\| \varphi  \|_{{H^1}({\mathbb{R}^N})}}, \hfill \\
\end{array}
\end{equation}
Similar to \eqref{add5} and by the definition of ${\tilde f}$ ,
\begin{equation}\label{add6}
\Bigl| {\int_{{\Omega _\varepsilon } - {y_\varepsilon }} {(1 - {\chi _{{\Lambda _\varepsilon }}}(x + {y_\varepsilon }))\tilde f({v_\varepsilon }){{\tilde \varphi }_\varepsilon }} } \Bigr| \le \frac{1}
{2}\int_{{\Omega _\varepsilon } - {y_\varepsilon }} {|{v_\varepsilon }{{\tilde \varphi }_\varepsilon }|}  \le o(1){\| \varphi  \|_{{H^1}({\mathbb{R}^N})}}.
\end{equation}
By $(F_1)$ and $(F_2)$, we see that for any $\delta > 0$, there exists ${C_\delta } > 0$ such that
\begin{equation}\label{add7}
\begin{array}{ll}
\dis  \Bigl| {\int_{{\Omega _\varepsilon } - {y_\varepsilon }} {{\chi _{{\Lambda _\varepsilon }}}(x + {y_\varepsilon })(f({v_\varepsilon }) - {{(v_\varepsilon ^ + )}^{{2^ * } - 1}}){{\tilde \varphi }_\varepsilon }} } \Bigr| &\le \dis \delta \int_{{\Omega _\varepsilon } - {y_\varepsilon }} {|{v_\varepsilon }{|^{{2^ * } - 1}}|{{\tilde \varphi }_\varepsilon }|}  + {C_\delta }\int_{{\Omega _\varepsilon } - {y_\varepsilon }} {|{v_\varepsilon }{{\tilde \varphi }_\varepsilon }|}  \hfill \\
   &\le \dis (C\delta  + {C_\delta } \cdot o(1)){\| \varphi  \|_{{H^1}({\mathbb{R}^N})}}. \hfill \\
\end{array}
\end{equation}
Letting $\varepsilon \to 0$ and $\delta \to 0$, we get
\begin{equation}\label{add8}
\Bigl| {\int_{{\Omega _\varepsilon } - {y_\varepsilon }} {{\chi _{{\Lambda _\varepsilon }}}(x + {y_\varepsilon })(f({v_\varepsilon }) - {{(v_\varepsilon ^ + )}^{{2^ * } - 1}}){{\tilde \varphi }_\varepsilon }} } \Bigr| = o(1){\| \varphi  \|_{{H^1}({\mathbb{R}^N})}}.
\end{equation}
By \eqref{add3}, \eqref{add4}, \eqref{add5}, \eqref{add6}, \eqref{add7} and \eqref{add8}, we see that
\[
\int_{{\Omega _{\varepsilon {\sigma _\varepsilon }}} - (({y_\varepsilon } + {z_\varepsilon })/\varepsilon )} {\nabla {w_\varepsilon } \cdot \nabla \varphi }  - \int_{{\Omega _{\varepsilon {\sigma _\varepsilon }}} - (({y_\varepsilon } + {z_\varepsilon })/\varepsilon )} {{\chi _{{\Lambda _\varepsilon }}}({\sigma _\varepsilon }x + {y_\varepsilon } + {z_\varepsilon }){{(w_\varepsilon ^ + )}^{{2^*} - 1}}\varphi }  = o(1){\left\| \varphi  \right\|_{{H^1}({\mathbb{R}^N})}}.
\]
Now, we divide \textbf{Case~2} into the following three subcases:\\
\begin{itemize}
\item \textbf{Case~2-1.} $\varepsilon {y_\varepsilon } \in {\Lambda ^{{k_0}}}$ and $\mathop {\lim }\nolimits_{\varepsilon  \to 0} d({y_\varepsilon },\partial {({\Lambda ^{{k_0}}})_\varepsilon }) =  + \infty $. We see that  $0 \le w \in {D^{1,2}}({\mathbb{R}^N})$ and satisfies
\begin{equation}\label{3.20}
 - \Delta w = |w{|^{{2^ * } - 2}}w{\text{ in }}{\mathbb{R}^N}.
\end{equation}
\item \textbf{Case~2-2.} $\mathop {\lim }\nolimits_{\varepsilon  \to 0} d({y_\varepsilon },\partial {({\Lambda ^{{k_0}}})_\varepsilon }) <  + \infty $. Up to translation and rotation, we see that $0 \le w \in {D^{1,2}}({\mathbb{R}^N})$ and satisfies
\[
\left\{ \begin{array}{ll}
   - \Delta w = |w{|^{{2^*} - 2}}w&{\text{ in }}\mathbb{R}_ + ^N, \hfill \\
  w = 0&{\text{ on }}\partial \mathbb{R}_ + ^N. \hfill \\
\end{array}  \right.
\]
Due to Pohozaev's non-existence result \cite{p} (see also \cite{d}), $w \equiv 0$ on ${\mathbb{R}^N}$.
\noindent\item \textbf{Case~2-3.} $\varepsilon {y_\varepsilon } \in {(\widetilde{{\Lambda ^{{k_0}}}})^{ - 1/(2{\varepsilon ^{1/2}})}}\backslash {\Lambda ^{{k_0}}}$ and $\mathop {\lim }\nolimits_{\varepsilon  \to 0} d({y_\varepsilon },\partial {({\Lambda ^{{k_0}}})_\varepsilon }) =  + \infty $. We see that $0 \le w \in {D^{1,2}}({\mathbb{R}^N})$ and satisfies
$ - \Delta w = 0$ in ${\mathbb{R}^N}$, i.e. $w \equiv 0$ on ${\mathbb{R}^N}$.
\end{itemize}
Next, we claim that,
\begin{equation}\label{3.21}
w_\varepsilon ^ +  \to w {\text{ in }}{L^{{2^*}}}({B_1}(0)).
\end{equation}
For a fixed ${r_0} > 1$, let $\varphi(x)  \in C_c^\infty ({\mathbb{R}^N},[0,1])$ satisfy $\varphi (x) = 1$ for $x \in {B_{{r_0}}}(0)$ and ${\text{supp}}\varphi (x) \subset {B_{2{r_0}}}(0)$. By \eqref{3.19}, we may assume that
\[
|\nabla (\varphi w_\varepsilon ^ + ){|^2} \rightharpoonup |\nabla (\varphi w){|^2} + \mu {\text{ and }}{(\varphi w_\varepsilon ^ + )^{{2^*}}} \rightharpoonup {(\varphi w)^{{2^*}}} + \nu ,
\]
where $\mu$ and $\nu$ are two bounded nonnegative measures on ${\mathbb{R}^N}$.  By the concentration compactness principle II (Lemma I.1 of \cite{l}), we obtain an at most countable index set $\Gamma $, sequences ${\{ {x_j}\} _{j \in \Gamma }} \subset {\mathbb{R}^N}$ and ${\{ {\mu _j}\} _{j \in \Gamma }},{\{ {\nu _j}\} _{j \in \Gamma }} \subset (0,\infty )$ such that
\begin{equation}\label{3.22}
\mu  \ge \sum\limits_{j \in \Gamma } {{\mu _j}} {\delta _{{x_j}}},\nu  = \sum\limits_{j \in \Gamma } {{\nu _j}} {\delta _{{x_j}}}{\text{ and }}S{({\nu _j})^{(N - 2)/N}} \le {\mu _j}.
\end{equation}
To prove \eqref{3.21}, it suffices to show that ${\{ {x_j}\} _{j \in \Gamma }} \cap \overline {{B_1}(0)}  = \emptyset $. Suppose that there is ${x_{j'}} \in \overline {{B_1}(0)} $ for some ${j'} \in \Gamma $ and define the function ${\phi _\rho }(x): = \phi ( {(x - {x_{{j'}}})/\rho } )$, where $\rho>0$ and $\phi(x)  \in C_c^\infty ({\mathbb{R}^N},[0,1])$ such that $\phi (x) = 1$ for $x \in {B_1}(0)$ and ${\text{supp}}\phi (x) \subset {B_2}(0)$ and$|{\nabla \phi }(x) | \le C$. We denote ${{\tilde \phi }_{\rho ,\varepsilon }}(x): = {\phi _\rho }((x - {z_\varepsilon })/{\sigma _\varepsilon })$, since ${z_\varepsilon } \in \overline {{B_1}(0)} $, ${x_{{j'}}} \in \overline {{B_1}(0)} $ and ${\sigma _\varepsilon} \to 0$ as $\varepsilon \to 0 $, we see that for $\varepsilon > 0$ small, ${\text{supp}}{{\tilde \phi }_{\rho ,\varepsilon }} \subset {B_{2{\sigma _\varepsilon }\rho }}({z_\varepsilon } + {\sigma _\varepsilon }{x_{j'}}) \subset {B_2}(0)$, then ${{\tilde \phi }_{\rho ,\varepsilon }}{v_\varepsilon } \in H_0^1({B_2}(0))$. Direct computations show that
\[
\int_{{\mathbb{R}^N}} {|{{\tilde \phi }_{\rho ,\varepsilon }}{v_\varepsilon }{|^2}}  \le \int_{{\mathbb{R}^N}} {|{v_\varepsilon }{|^2}}  \le C
\]
and
\[\begin{gathered}
\quad  \int_{{\mathbb{R}^N}} {|\nabla ({{\tilde \phi }_{\rho ,\varepsilon }}{v_\varepsilon }){|^2}}  \hfill \\
   \le C\int_{{\mathbb{R}^N}} {|\nabla {{\tilde \phi }_{\rho ,\varepsilon }}{|^2}|{v_\varepsilon }{|^2}}  + C\int_{{\mathbb{R}^N}} {|{{\tilde \phi }_{\rho ,\varepsilon }}{|^2}|\nabla {v_\varepsilon }{|^2}}  \hfill \\
   \le C{\Bigl( {\int_{{B_{2{\sigma _\varepsilon }\rho }}({z_\varepsilon } + {\sigma _\varepsilon }{x_{j'}})} {|\nabla {{\tilde \phi }_{\rho ,\varepsilon }}{|^N}} } \Bigr)^{2/N}}{\Bigl( {\int_{{\mathbb{R}^N}} {|{v_\varepsilon }{|^{{2^*}}}} } \Bigr)^{(N - 2)/N}} + C\int_{{\mathbb{R}^N}} {|\nabla {v_\varepsilon }{|^2}}  \le C. \hfill \\
\end{gathered} \]
Thus ${\{ {{\tilde \phi }_{\rho ,\varepsilon }}{v_\varepsilon }\} _{\varepsilon  > 0}}$ is bounded in ${H^1}({\mathbb{R}^N})$ uniformly for $\rho $, hence
$$\langle {{J'_\varepsilon }({u_\varepsilon }),{{\tilde \phi }_{\rho ,\varepsilon }}(x - {y_\varepsilon }){u_\varepsilon }} \rangle  = o(1),$$
then we get
\begin{equation}\label{3.23}
\begin{gathered}
\quad  \int_{{\Omega _\varepsilon } - {y_\varepsilon }} {|\nabla {v_\varepsilon }{|^2}{{\tilde \phi }_{\rho ,\varepsilon }}}  + \int_{{{\Omega _\varepsilon } - {y_\varepsilon }}} {(\nabla {v_\varepsilon }\cdot\nabla {{\tilde \phi }_{\rho ,\varepsilon }}){v_\varepsilon }}  + \int_{{\Omega _\varepsilon } - {y_\varepsilon }} {|{v_\varepsilon }{|^2}{{\tilde \phi }_{\rho ,\varepsilon }}}  \hfill \\
   = \int_{{\Omega _\varepsilon } - {y_\varepsilon }} {{\chi _{{\Lambda _\varepsilon }}}(x + {y_\varepsilon })f({v_\varepsilon }){v_\varepsilon }{{\tilde \phi }_{\rho ,\varepsilon }}}  + \int_{{\Omega _\varepsilon } - {y_\varepsilon }} {(1 - {\chi _{{\Lambda _\varepsilon }}}(x + {y_\varepsilon }))\tilde f({v_\varepsilon }){v_\varepsilon }{{\tilde \phi }_{\rho ,\varepsilon }}}  + o(1). \hfill \\
\end{gathered}
\end{equation}
Note that
\begin{equation}\label{3.24}
\mathop {\overline {\lim } }\limits_{\varepsilon  \to 0} \int_{{\Omega _\varepsilon } - {y_\varepsilon }} {|{v_\varepsilon }{|^2}{{\tilde \phi }_{\rho ,\varepsilon }}}  \le \mathop {\overline {\lim } }\limits_{\varepsilon  \to 0} \sigma _\varepsilon ^2\int_{{B_{2\rho }}({x_{j'}})} {|{w_\varepsilon }{|^2}{\phi _\rho }}  = 0.
\end{equation}
Similar to \eqref{3.24}, we see from the definition of ${\tilde f}$ that
\[
\mathop {\overline {\lim } }\limits_{\varepsilon  \to 0} \int_{{\Omega _\varepsilon } - {y_\varepsilon }} {(1 - {\chi _{{\Lambda _\varepsilon }}}(x + {y_\varepsilon }))\tilde f({v_\varepsilon }){v_\varepsilon }{{\tilde \phi }_{\rho ,\varepsilon }}}  = 0.
\]
We see from $(F_1)$ and $(F_2)$ that for any $\delta > 0$, there exists ${C_\delta } > 0$ such that
\[\begin{gathered}
\quad  \Bigl| {\int_{{\Omega _\varepsilon } - {y_\varepsilon }} {{\chi _{{\Lambda _\varepsilon }}}(x + {y_\varepsilon })(f({v_\varepsilon }){v_\varepsilon } - {{(v_\varepsilon ^ + )}^{{2^*} }}){{\tilde \phi }_{\rho ,\varepsilon }}} } \Bigr| \hfill \\
   \le \delta \int_{{B_{2{\sigma _\varepsilon }\rho }}({z_\varepsilon } + {\sigma _\varepsilon }{x_{j'}})} {|{v_\varepsilon }{|^{{2^*}}}{{\tilde \phi }_{\rho ,\varepsilon }}}  + {C_\delta }\int_{{B_{2{\sigma _\varepsilon }\rho }}({z_\varepsilon } + {\sigma _\varepsilon }{x_{j'}})} {|{v_\varepsilon }{|^2}{{\tilde \phi }_{\rho ,\varepsilon }}}  \hfill \\
   \le  C\delta  + {C_\delta }\sigma _\varepsilon ^2\int_{{B_{2\rho }}({x_{j'}})} {|{w_\varepsilon }{|^2}{\phi _\rho }}  \hfill \\
\end{gathered} \]
Letting $\varepsilon \to 0$ and $\delta \to 0$, we get
\[
\mathop {\lim }\limits_{\varepsilon  \to 0} \int_{{\Omega _\varepsilon } - {y_\varepsilon }} {{\chi _{{\Lambda _\varepsilon }}}(x + {y_\varepsilon })(f({v_\varepsilon }){v_\varepsilon } - {{(v_\varepsilon ^ + )}^{{2^*} }}){{\tilde \phi }_{\rho ,\varepsilon }}}  = 0.
\]
Moreover,
\[\begin{gathered}
\quad \mathop {\overline {\lim } }\limits_{\rho  \to 0} \mathop {\overline {\lim } }\limits_{\varepsilon  \to 0} \Bigl| {\int_{{\Omega _\varepsilon } - {y_\varepsilon }} {(\nabla {v_\varepsilon }\cdot\nabla {{\tilde \phi }_{\rho ,\varepsilon }}){v_\varepsilon }} } \Bigr| \hfill \\
   \le \mathop {\overline {\lim } }\limits_{\rho  \to 0} \mathop {\overline {\lim } }\limits_{\varepsilon  \to 0} {\Bigl( {\int_{{\Omega _\varepsilon } - {y_\varepsilon }} {|\nabla {v_\varepsilon }{|^2}} } \Bigr)^{1/2}}{\Bigl( {\int_{{B_{2{\sigma _\varepsilon }\rho }}({z_\varepsilon } + {\sigma _\varepsilon }{x_{j'}})} {|{v_\varepsilon }{|^2}|\nabla {{\tilde \phi }_{\rho ,\varepsilon }}{|^2}} } \Bigr)^{1/2}} \hfill \\
   \le C\mathop {\overline {\lim } }\limits_{\rho  \to 0} \mathop {\overline {\lim } }\limits_{\varepsilon  \to 0} {\Bigl( {\int_{{B_{2\rho }}({x_{j'}})} {|{w_\varepsilon }{|^2}|\nabla {\phi _\rho }{|^2}} } \Bigr)^{1/2}} \hfill \\
   = C\mathop {\overline {\lim } }\limits_{\rho  \to 0} {\Bigl( {\int_{{B_{2\rho }}({x_{j'}})} {w^2}|\nabla {\phi _\rho }{|^2}} \Bigr)^{1/2}} \hfill \\
   \le C\mathop {\overline {\lim } }\limits_{\rho  \to 0} {\Bigl( {\int_{{B_{2\rho }}({x_{j'}})} {|\nabla {\phi _\rho }{|^N}} } \Bigr)^{1/N}}{\Bigl( {\int_{{B_{2\rho }}({x_{j'}})} {{w^{{2^*}}}} } \Bigr)^{1/{2^ * }}} = 0. \hfill \\
\end{gathered} \]
We note that ${\text{supp}}{\phi _\rho } \subset {B_{2\rho }}({x_{j'}}) \subset {B_{{r_0}}}(0)$ for $\rho > 0$ small, by \eqref{3.22},
\[
\mathop {\overline {\lim } }\limits_{\rho  \to 0} \mathop {\overline {\lim } }\limits_{\varepsilon  \to 0} \int_{{\Omega _\varepsilon } - {y_\varepsilon }} {|\nabla {v_\varepsilon }{|^2}{{\tilde \phi }_{\rho ,\varepsilon }}}  \ge \mathop {\overline {\lim } }\limits_{\rho  \to 0} \mathop {\overline {\lim } }\limits_{\varepsilon  \to 0} \int_{{\mathbb{R}^N}} {|\nabla w_\varepsilon ^ + {|^2}{\phi _\rho }}  = \mathop {\overline {\lim } }\limits_{\rho  \to 0} \Bigl( {\int_{{\mathbb{R}^N}} {|\nabla w{|^2}{\phi _\rho }}  + \int_{{\mathbb{R}^N}} {\mu {\phi _\rho }} } \Bigr) \ge {\mu _{j'}}
\]
and
\[
\begin{array}{ll}
  \mathop  {\overline {\lim } }\limits_{\rho  \to 0} \mathop {\overline {\lim } }\limits_{\varepsilon  \to 0}\dis \int_{{\Omega _\varepsilon } - {y_\varepsilon }} {{\chi _{{\Lambda _\varepsilon }}}(x + {y_\varepsilon }){{(v_\varepsilon ^ + )}^{{2^*}}}{{\tilde \phi }_{\rho ,\varepsilon }}}  &\le \mathop {\overline {\lim } }\limits_{\rho  \to 0} \mathop {\overline {\lim } }\limits_{\varepsilon  \to 0} \dis\int_{{\Omega _\varepsilon } - {y_\varepsilon }} {{{(w_\varepsilon ^ + )}^{{2^*}}}{\phi _\rho }}  \hfill \\
   &= \mathop {\overline {\lim } }\limits_{\rho  \to 0}\dis \Bigl( {\int_{{\mathbb{R}^N}} {w^{{2^*}}{\phi _\rho }}  + \int_{{\mathbb{R}^N}} {\nu {\phi _\rho }} } \Bigr) = {\nu _{j'}}. \hfill \\
\end{array}
\]
By \eqref{3.23}, ${\mu _{j'}} \le {\nu _{j'}}$, then by \eqref{3.22}, ${\nu _{j'}} \ge {S^{N/2}}$. From \eqref{3.17}, we see that
\begin{equation}\label{3.25}
{S^{N/2}} \le {\nu _{j'}} \le \int_{{B_1}(0)} {{{(w_\varepsilon ^ + )}^{{2^*}}}}  + o(1) = \tau  + o(1),
\end{equation}
where $o(1) \to 0$ as $\varepsilon  \to 0$. Letting $\varepsilon  \to 0$ in \eqref{3.25}, we see that ${S^{N/2}} \le \tau $ which contradicts $\tau < {S^{N/2}}$. Hence, ${\{ {x_j}\} _{j \in \Gamma }} \cap \overline {{B_1}(0)}  = \emptyset $, then \eqref{3.21} holds. \eqref{3.17} and \eqref{3.21} imply that
\begin{equation}\label{add17}
\int_{{B_1}(0)} {{w^{{2^*}}}}  = \mathop {\lim }\limits_{\varepsilon  \to 0} \int_{{B_1}(0)} {{{(w_\varepsilon ^ + )}^{{2^*}}}}  = \tau  > 0,
\end{equation}
which means that $w$ is nontrivial. Hence, neither {\bf Case~2-2} nor {\bf Case~2-3} holds and only {\bf Case~2-1} is ture. Thus, $w$ is a positive solution to \eqref{3.20} and satisfies
\begin{equation}\label{3.26}
\int_{{\mathbb{R}^N}} {|\nabla w{|^2}}  = {S^{N/2}} = \int_{{\mathbb{R}^N}} {|w{|^{{2^*}}}} ,
\end{equation}
then for some $R > 0$ such that
\begin{equation}\label{3.27}
\int_{{B_R}(0)} {|\nabla w{|^2}}  \ge \frac{1}
{2}\int_{{\mathbb{R}^N}} {|\nabla w{|^2}}  = \frac{1}
{2}{S^{N/2}} > 0.
\end{equation}
On the other hand,
\begin{equation}\label{3.28}
\begin{array}{ll}
\dis  \int_{{B_R}(0)} {|\nabla w{|^2}} & \dis \le \mathop {\underline {\lim } }\limits_{\varepsilon  \to 0} \int_{{B_R}(0)} {|\nabla {w_\varepsilon^+ }{|^2}}  \le \mathop {\underline {\lim } }\limits_{\varepsilon  \to 0} \int_{{B_{{\sigma _\varepsilon }R}}({z_\varepsilon })} {|\nabla v_\varepsilon {|^2}}  \hfill \\
  & \dis = \mathop {\underline {\lim } }\limits_{\varepsilon  \to 0} \int_{{B_{{\sigma _\varepsilon }R}}({z_\varepsilon } + {y_\varepsilon })} {|\nabla u_\varepsilon  {|^2}}  \le \mathop {\underline {\lim } }\limits_{\varepsilon  \to 0} \int_{{B_2}({y_\varepsilon })} {|\nabla {u_\varepsilon }{|^2}} . \hfill \\
\end{array}
\end{equation}
Similar to \eqref{3.14}, we see that \eqref{3.27} and \eqref{3.28} lead to a contradiction for ${d_0} > 0$ small. Hence, \eqref{3.9} holds. We note that
\[
\mathop {\sup }\limits_{y \in {A_\varepsilon }} \int_{{B_1}(y)} {{{(u_\varepsilon ^ + )}^{{2^*}}}}  \ge \mathop {\sup }\limits_{y \in {\mathbb{R}^N}} \int_{{B_1}(y)} {{{({\varphi _{A_\varepsilon ^1}}u_\varepsilon ^ + )}^{{2^*}}}} ,
\]
where ${\varphi _{A_\varepsilon ^1}} \in C_c^\infty ({\mathbb{R}^N},[0,1])$ satisfies ${\varphi _{A_\varepsilon ^1}} = 1$ on $\mathop  \cup \nolimits_{k = 1}^K \bigl( {(\widetilde{{\Lambda ^k}})_\varepsilon ^{ - 1/(2{\varepsilon ^{1/2}}) - 2}\backslash {B_{(\beta /2\varepsilon ) + 2}}(x_\varepsilon ^k/\varepsilon )} \bigr)$, ${\text{supp}}{\varphi _{A_\varepsilon ^1}} \subset \mathop  \cup \nolimits_{k = 1}^K \bigl( {(\widetilde{{\Lambda ^k}})_\varepsilon ^{- 1/(2{\varepsilon ^{1/2}}) - 1}\backslash {B_{(\beta /2\varepsilon ) + 1}}(x_\varepsilon ^k/\varepsilon )} \bigr)$ and $|\nabla {\varphi _{A_\varepsilon ^1}}| \le C$. And since ${\{ {\varphi _{A_\varepsilon ^1}}u_\varepsilon  \} _{\varepsilon  > 0}}$ is bounded in ${H^1}({\mathbb{R}^N})$, we see from \eqref{3.9} and Lemma~\ref{2.2.} that
\[
\mathop {\lim }\limits_{\varepsilon  \to 0} \int_{{\mathbb{R}^N}} {{{({\varphi _{A_\varepsilon ^1}}u_\varepsilon ^ + )}^{{2^*}}}}  = 0,
\]
then, it follows from the interpolation inequality that
\begin{equation}\label{3.29}
\mathop {\lim }\limits_{\varepsilon  \to 0} \int_{\mathop  \cup \limits_{k = 1}^K \bigl( {(\widetilde{{\Lambda ^k}})_\varepsilon ^{ - 1/{\varepsilon ^{1/2}}}\backslash {B_{\beta /\varepsilon }}(x_\varepsilon ^k/\varepsilon )} \bigr)} {({u_\varepsilon^+})^p}  = 0 \text{ for all } p \in (2,{2^ * }].
\end{equation}
\textbf{Step~2}. Letting $${u_{\varepsilon ,1}}(x): = \sum\nolimits_{k = 1}^K {u_{\varepsilon ,1}^k(x)} : = \sum\nolimits_{k = 1}^K {{\varphi _{\varepsilon ,k}}(x){u_\varepsilon }(x)} $$ and ${u_{\varepsilon ,2}}(x): = {u_\varepsilon }(x) - {u_{\varepsilon ,1}}(x)$. Direct computations show that
\[
\int_{{\mathbb{R}^N}} {|\nabla {u_\varepsilon }{|^2}}  \ge \int_{{\mathbb{R}^N}} {|\nabla u_{\varepsilon ,1}{|^2}}  + \int_{{\mathbb{R}^N}} {|\nabla u_{\varepsilon ,2}{|^2}}  + o(1),
\]
\[
\int_{{\mathbb{R}^N}} {|{u_\varepsilon }{|^2}}  \ge \int_{{\mathbb{R}^N}} {|u_{\varepsilon ,1}{|^2}}  + \int_{{\mathbb{R}^N}} {|u_{\varepsilon ,2}{|^2}},
\]
\[
{Q_\varepsilon }({u_{\varepsilon ,1}}) = 0,{\text{ }}{Q_\varepsilon }({u_{\varepsilon ,2}}) = {Q_\varepsilon }({u_\varepsilon }) \ge 0.
\]
Moreover, we derive from $(F_1)$, $(F_2)$, \eqref{3.29} and the definition of ${\tilde f}$ that
\[
\begin{gathered}
  \int_{{\Omega _\varepsilon }} {{G_\varepsilon }(x,{u_\varepsilon })}  = \int_{{\Omega _\varepsilon }} {{G_\varepsilon }(x,{u_{\varepsilon ,1}})}  + \int_{{\Omega _\varepsilon }} {{G_\varepsilon }(x,{u_{\varepsilon ,2}})}  + o(1). \hfill \\
   \hfill \\
\end{gathered}
\]
Hence, we get
\begin{equation}\label{3.30}
{J_\varepsilon }({u_\varepsilon }) \ge {J_\varepsilon }(u_{\varepsilon ,1}) + {J_\varepsilon }(u_{\varepsilon ,2}) + o(1).
\end{equation}
Next, we claim that ${\| {{u_{\varepsilon ,2}}} \|_{H_0^1({\Omega _\varepsilon })}} \to 0$ as $\varepsilon \to 0$. Indeed, by \eqref{3.8}, it holds that

\begin{eqnarray*}
  {\| {{u_{\varepsilon ,2}}} \|_{H_0^1({\Omega _\varepsilon })}} &\le& {\left\| {{u_{\varepsilon ,1}} - \sum\limits_{k = 1}^K {{\varphi _{\varepsilon ,k}}(x){U^k}(x - (x_\varepsilon ^k/\varepsilon ))} } \right\|_{{H^1}\bigl( {\mathop  \cup \limits_{k = 1}^K (\widetilde{{\Lambda ^k}})_\varepsilon ^{  - 1/{\varepsilon ^{1/2}} }} \bigr)}} + 2{d_0} \hfill \\
   &\le& {\| {{u_{\varepsilon ,2}}} \|_{{H^1}\bigl( {\mathop  \cup \limits_{k = 1}^K \bigl( {(\widetilde{{\Lambda ^k}})_\varepsilon ^{  - 1/{\varepsilon ^{1/2}} }\backslash (\widetilde{{\Lambda ^k}})_\varepsilon ^{  - 2/{\varepsilon ^{1/2}} }} \bigr)} \bigr)}} + 4{d_0} \hfill \\
   &\le& C{\| {{u_\varepsilon }} \|_{{H^1}\bigl( {\mathop  \cup \limits_{k = 1}^K \bigl( {(\widetilde{{\Lambda ^k}})_\varepsilon ^{  - 1/{\varepsilon ^{1/2}} }\backslash (\widetilde{{\Lambda ^k}})_\varepsilon ^{  - 2/{\varepsilon ^{1/2}} }} \bigr)} \bigr)}} + 4{d_0} \hfill \\
   &\le& C\sum\limits_{k = 1}^K {{{\| {{\varphi _{\varepsilon ,k}}(x){U^k}(x - (x_\varepsilon ^k/\varepsilon ))} \|}_{{H^1}\bigl( {(\widetilde{{\Lambda ^k}})_\varepsilon ^{  - 1/{\varepsilon ^{1/2}} }\backslash (\widetilde{{\Lambda ^k}})_\varepsilon ^{  - 2/{\varepsilon ^{1/2}} }} \bigr)}}}  + C{d_0} \hfill \\
   &\le& C\sum\limits_{k = 1}^K {{{\| {{U^k}} \|}_{{H^1}( {{\mathbb{R}^N}\backslash ({B_{\beta /\varepsilon }}(0)} )}}}  + C{d_0} = C{d_0} + o(1). \hfill \\
\end{eqnarray*}
Thus, $\mathop {\overline {\lim } }\nolimits_{\varepsilon  \to 0} {\| {u_{\varepsilon ,2}} \|_{{H_0^1({\Omega _\varepsilon })}}} \le C{d_0}$. Since $\langle {{J'_\varepsilon }({u_\varepsilon }),{u_{\varepsilon ,2}}} \rangle  \to 0$ as $\varepsilon  \to 0$ and $\langle {{Q'_\varepsilon }({u_\varepsilon }),{u_{\varepsilon ,2}}} \rangle  = \langle {{Q'_\varepsilon }({u_{\varepsilon ,2}}),{u_{\varepsilon ,2}}} \rangle  \ge 0$, we see from $(F_1)$ and $(F_2)$ that
\[\begin{array}{ll}
\dis  \int_{{\Omega _\varepsilon }} {\nabla {u_\varepsilon }\cdot\nabla {u_{\varepsilon ,2}}}  + \int_{{\Omega _\varepsilon }} {{u_\varepsilon }{u_{\varepsilon ,2}}} & \le \dis \int_{{\Omega _\varepsilon }} {f({u_\varepsilon }){u_{\varepsilon ,2}}}  + o(1) \hfill \\
   &\le \dis\frac{1}
{2}\int_{{\Omega _\varepsilon }} {u_\varepsilon ^+u_{\varepsilon ,2}^ + }  + C\int_{{\Omega _\varepsilon }} {{{(u_\varepsilon ^+)}^{{2^*} - 1}}(u_{\varepsilon ,2}^+)}  + o(1), \hfill \\
\end{array} \]
then by \eqref{3.29} and the Sobolev's imbedding theorem, we have
\begin{equation}\label{add9}
\| {{u_{\varepsilon ,2}}} \|_{H_0^1({\Omega _\varepsilon })}^2 \le C\| {{u_{\varepsilon ,2}}} \|_{H_0^1({\Omega _\varepsilon })}^{{2^ * }} + o(1).
\end{equation}
Choosing ${d_0} > 0$ small, \eqref{add9} yields ${\| {{u_{\varepsilon ,2}}} \|_{{H_0^1({\Omega _\varepsilon })}}} = o(1)$, by \eqref{3.30}, we have
\begin{equation}\label{3.31}
{J_\varepsilon }({u_\varepsilon }) \ge {J_\varepsilon }(u_{\varepsilon ,1}) + o(1).
\end{equation}
\textbf{Step~3}. For each $1 \le k \le K$, letting $$\tilde w_\varepsilon ^k(x): = u_{\varepsilon ,1}^k(x + (x_\varepsilon ^k/\varepsilon )) = {\varphi _{\varepsilon ,k}}(x + (x_\varepsilon ^k/\varepsilon )){u_\varepsilon }(x + (x_\varepsilon ^k/\varepsilon )).$$ Up to a subsequence, as $\varepsilon \to 0$, for some $ {{\tilde w}^k} \in {H^1}({\mathbb{R}^N})$ such that $\tilde w_\varepsilon ^k \rightharpoonup {{\tilde w}^k}$ in ${H^1}({\mathbb{R}^N})$. Next, we claim that
\begin{equation}\label{3.32}
\tilde w_\varepsilon ^k \to {{\tilde w}^k}{\text{ in }}{L^{{2^*}}}({\mathbb{R}^N}).
\end{equation}
Indeed, in view of Lemma~\ref{2.2.}, we assume on the contrary that for some $ r > 0$ such that
\[
\mathop {\underline {\lim } }\limits_{\varepsilon  \to 0} \mathop {\sup }\limits_{z \in {\mathbb{R}^N}} \int_{{B_1}(z)} {|\tilde w_\varepsilon ^k - {{\tilde w}^k}{|^{{2^*}}}}  = 2r > 0,
\]
then for $\varepsilon > 0$ small and $ z_\varepsilon ^k \in {\mathbb{R}^N}$ such that
\begin{equation}\label{3.33}
\int_{{B_1}(z_\varepsilon ^k)} {|\tilde w_\varepsilon ^k - {{\tilde w}^k}{|^{{2^*}}}}  \ge r > 0.
\end{equation}
There are two cases.

{\bf Case~I.} ${\{ z_\varepsilon ^k\} _{\varepsilon  > 0}}$ is bounded, i.e. $|z_\varepsilon ^k| \le \alpha $ for some $\alpha > 0$, then
\begin{equation}\label{3.34}
\int_{{B_{\alpha  + 1}}(0)} {|\tilde v_\varepsilon ^k{|^{{2^*}}}}  \ge r > 0,
\end{equation}
where
\begin{equation}\label{3.35}
\tilde v_\varepsilon ^k: = \tilde w_\varepsilon ^k - {{\tilde w}^k} \rightharpoonup 0{\text{ in }}{H^1}({\mathbb{R}^N}){\text{ as }}\varepsilon  \to 0.
\end{equation}
Arguing as in {\bf Case~2} of \textbf{Step~1}, we see from \eqref{3.34} and \eqref{3.35} that for each $1 \le k \le K$, there exist ${\{ \tilde z_\varepsilon ^k\} _{\varepsilon  > 0}} \subset \overline {{B_{\alpha  + 1}}(0)} $ and ${\{ \tilde \sigma _\varepsilon ^k\} _{\varepsilon  > 0}}$ with $\tilde \sigma _\varepsilon ^k > 0$ and $\tilde \sigma _\varepsilon ^k \to 0$ as $\varepsilon \to 0$ such that
\begin{center}
$\hat w_\varepsilon ^k(x): = {(\tilde \sigma _\varepsilon ^k)^{(N - 2)/2}}\tilde v_\varepsilon ^k(\tilde \sigma _\varepsilon ^kx + \tilde z_\varepsilon ^k) \rightharpoonup {{\hat w}^k}(x)$ in ${D^{1,2}}({\mathbb{R}^N})$,
\end{center}
where ${{\hat w}^k}$ is a nontrivial solution to \eqref{3.20} and satisfies \eqref{3.26}. From \eqref{3.8} and the fact that ${\| {{u_{\varepsilon ,2}}} \|_{H_0^1({\Omega _\varepsilon })}} = o(1)$, we get
\begin{equation}\label{3.36}
\begin{array}{ll}
\dis  \int_{{\mathbb{R}^N}} {|\nabla u_{\varepsilon ,1}^k{|^2}}  &\le\dis \int_{{\mathbb{R}^N}} {|\nabla ({\varphi _{\varepsilon ,k}}(x){U^k}(x - (x_\varepsilon ^k/\varepsilon ))){|^2}}  + C{d_0} \hfill \\
   &\le \dis\int_{{\mathbb{R}^N}} {|\nabla {U^{{k}}}{|^2}}  + C{d_0} + o(1). \hfill \\
\end{array}
\end{equation}
On the other hand,
\begin{equation}\label{3.37}
\begin{array}{ll}
 \dis \int_{{\mathbb{R}^N}} {|\nabla {{\hat w}^k}{|^2}}  &\le \dis \mathop {\underline {\lim } }\limits_{\varepsilon  \to 0} \int_{{\mathbb{R}^N}} {|\nabla \hat w_\varepsilon ^k{|^2}}  = \mathop {\underline {\lim } }\limits_{\varepsilon  \to 0} \int_{{\mathbb{R}^N}} {|\nabla \tilde v_\varepsilon ^k{|^2}}  \hfill \\
  & \dis= \mathop {\underline {\lim } }\limits_{\varepsilon  \to 0} \int_{{\mathbb{R}^N}} {|\nabla \tilde w_\varepsilon ^k{|^2}}  - \int_{{\mathbb{R}^N}} {|\nabla {{\tilde w}^k}{|^2}}  \le \mathop {\underline {\lim } }\limits_{\varepsilon  \to 0} \int_{{\mathbb{R}^N}} {|\nabla u_{\varepsilon ,1}^k{|^2}} . \hfill \\
\end{array}
\end{equation}
By \eqref{3.36} and \eqref{3.37}, we have
\begin{equation}\label{3.38}
\int_{{\mathbb{R}^N}} {|\nabla {{\hat w}^k}{|^2}}  \le \int_{{\mathbb{R}^N}} {|\nabla {U^{{k}}}{|^2}}  + C{d_0}.
\end{equation}
Thus, we see from \eqref{3.38} and \eqref{3.26} that
\[
c = I({U^k}) - \frac{1}
{N}P({U^k}) = \frac{1}
{N}\int_{{\mathbb{R}^N}} {|\nabla {U^k}{|^2}}  \ge \frac{1}
{N}\int_{{\mathbb{R}^N}} {|\nabla {{\hat w}^k}{|^2}}  - C{d_0} = \frac{1}
{N}{S^{N/2}} - C{d_0},
\]
which contradicts $c < \frac{1}
{N}{S^{N/2}}$ for ${d_0} > 0$ small.

{\bf Case~II.} ${\{ z_\varepsilon ^k\} _{\varepsilon  > 0}}$ is unbounded, i.e. up to a subsequence, $\mathop {\lim }\nolimits_{\varepsilon  \to 0} |z_\varepsilon ^k| =  + \infty $. Thus, by \eqref{3.33}, we see that
\begin{equation}\label{3.39}
\mathop {\underline {\lim } }\limits_{\varepsilon  \to 0} \int_{{B_1}(z_\varepsilon ^k)} {|\tilde w_\varepsilon ^k{|^{{2^ * }}}}  \ge r > 0,
\end{equation}
i.e.
\begin{equation}\label{3.40}
\mathop {\underline {\lim } }\limits_{\varepsilon  \to 0} \int_{{B_1}(z_\varepsilon ^k)} {|{\varphi _{\varepsilon ,k}}(x + (x_\varepsilon ^k/\varepsilon )){u_\varepsilon }(x + (x_\varepsilon ^k/\varepsilon )){|^{{2^ * }}}}  \ge r > 0.
\end{equation}
Since ${\varphi _{\varepsilon ,k}}(x) = 0$ for $x \notin (\widetilde{{\Lambda ^k}})_\varepsilon ^{ - 1/{\varepsilon ^{1/2}}}$, we see from \eqref{3.40} that $z_\varepsilon ^k + (x_\varepsilon ^k/\varepsilon ) \in (\widetilde{{\Lambda ^k}})_\varepsilon ^{ - 1/(2{\varepsilon ^{1/2}})}$ for $\varepsilon  > 0$ small. If $|z_\varepsilon ^k| \ge \beta /2\varepsilon $, then $z_\varepsilon ^k + (x_\varepsilon ^k/\varepsilon ) \in (\widetilde{{\Lambda ^k}})_\varepsilon ^{ - 1/(2{\varepsilon ^{1/2}})}\backslash {B_{\beta /2\varepsilon }}(x_\varepsilon ^k/\varepsilon )$. By \eqref{3.9}, we get
\[
\begin{array}{ll}
 \dis \mathop {\underline {\lim } }\limits_{\varepsilon  \to 0} \int_{{B_1}(z_\varepsilon ^k)} {|\tilde w_\varepsilon ^k{|^{{2^ * }}}}  &\dis\le \mathop {\underline {\lim } }\limits_{\varepsilon  \to 0} \int_{{B_1}(z_\varepsilon ^k)} {|{u_\varepsilon }(x + (x_\varepsilon ^k/\varepsilon )){|^{{2^ * }}}}  \hfill \\
   &\dis\le \mathop {\underline {\lim } }\limits_{\varepsilon  \to 0} \int_{{B_1}(z_\varepsilon ^k + (x_\varepsilon ^k/\varepsilon ))} {|{u_\varepsilon }{|^{{2^ * }}}}  \le \mathop {\underline {\lim } }\limits_{\varepsilon  \to 0} \mathop {\sup }\limits_{y \in {A_\varepsilon }} \int_{{B_1}(y)} {|{u_\varepsilon }{|^{{2^*}}}}  = 0, \hfill \\
\end{array}
\]
which contradicts \eqref{3.39}. Thus, $|z_\varepsilon ^k| \le \beta /2\varepsilon $ for $\varepsilon  > 0$ small. Assume that $\bar w_\varepsilon ^k(x): = \tilde w_\varepsilon ^k(x + z_\varepsilon ^k) \rightharpoonup {{\bar w}^k}(x)$ in ${H^1}({\mathbb{R}^N})$. If $\bar w_\varepsilon ^k \ne 0$, arguing as in {\bf Case~1} of \textbf{Step~1}, we get a contradiction for ${d_0} > 0$ small. If $\bar w_\varepsilon ^k = 0$, similar to {\bf Case~2} of \textbf{Step~1}, we see from \eqref{3.39} that for some $ \tilde x_\varepsilon ^k \in \overline {{B_1}(0)} $, $\gamma _\varepsilon ^k > 0$ with ${\gamma _\varepsilon ^k } \to 0$ as $\varepsilon  \to 0$ and
\[
w_{\varepsilon , * }^k(x): = {(\gamma _\varepsilon ^k)^{(N - 2)/2}}\bar w_\varepsilon ^k(\gamma _\varepsilon ^kx + \tilde x_\varepsilon ^k) \rightharpoonup w_ * ^k(x){\text{ in }}{D^{1,2}}({\mathbb{R}^N}),
\]
where $w_ * ^k$ is a nontrivial solution to \eqref{3.20} and satisfies \eqref{3.26}. Arguing as \eqref{3.27}, \eqref{3.28} and \eqref{3.14}, we get a contradiction by \eqref{3.8} for ${d_0} > 0$ small. Thus, \eqref{3.32} holds. The interpolation inequality shows that for each $1 \le k \le K$,
\begin{equation}\label{3.41}
\tilde w_\varepsilon ^k \to {{\tilde w}^k}{\text{ in }}{L^p}({\mathbb{R}^N})  \text{ for all } p \in (2,{2^ * }].
\end{equation}
Thanks to \eqref{3.8}, \eqref{3.41} and the fact that ${\| {{u_{\varepsilon ,2}}} \|_{H_0^1({\Omega _\varepsilon })}} = o(1)$, we see that ${{{\tilde w}^k}} \ne 0$ for ${d_0} > 0$ small. Arguing as in {\bf Case~1} of \textbf{Step~1}, we see that ${{{\tilde w}^k}}$ satisfies \eqref{2.1} or \eqref{3.12} with $\tilde f({{\tilde w}^k}(x)) < f({{\tilde w}^k}(x))$ for some $x \in {\mathbb{R}^N}\backslash \mathbb{R}_ + ^N$, then we have
\begin{equation}\label{3.42}
I({{\tilde w}^k}) \ge c{\text{ or }}\tilde I({{\tilde w}^k}) > c
\end{equation}
 respectively. Moreover, by \eqref{3.31},
\[
Kc + o(1) \ge {J_\varepsilon }({u_\varepsilon }) \ge {J_\varepsilon }({u_{\varepsilon ,1}}) + o(1) = \sum\limits_{k = 1}^K {J_\varepsilon ^k(u_{\varepsilon ,1}^k)}  + o(1),
\]
then by \eqref{3.41}, we get
\begin{equation}\label{3.43}
 \sum\limits_{k = 1}^K {I({{\tilde w}^k})({\text{or }}\tilde I({{\tilde w}^k}))}  \le Kc.
\end{equation}
From \eqref{3.42} and \eqref{3.43}, we see that for each $1 \le k \le K$,
\begin{equation}\label{3.44}
I({{\tilde w}^k}) = c.
\end{equation}
To sum up, only the situation which is analogous to {\bf Case~1-1} or {\bf Case~1-2-2-2} in \textbf{Step~1} may occur. That is, for each $1 \le k \le K$, one of the following two cases will occur.
\begin{itemize}
\item {\bf Case~A}. $x_\varepsilon ^k \in {\Lambda ^k}$ and $\mathop {\lim }\nolimits_{\varepsilon  \to 0} d(x_\varepsilon ^k/\varepsilon ,\partial {({\Lambda ^k})_\varepsilon }) =  + \infty $. ${{\tilde w}^k}$ satisfies \eqref{2.1}, by \eqref{3.44}, ${{\tilde w}^k}$ is a ground state solution to \eqref{2.1}, i.e. for some $ {U^k} \in S$, ${z^k} \in {\mathbb{R}^N}$ such that ${{\tilde w}^k}(x) = {U^k}(x - {z^k})$.
\item {\bf Case~B}. $x_\varepsilon ^k \in {\Lambda ^k}$ and $\mathop {\lim }\nolimits_{\varepsilon  \to 0} d(x_\varepsilon ^k/\varepsilon ,\partial {({\Lambda ^k})_\varepsilon }) < + \infty $. Up to a translation and a rotation, ${{\tilde w}^k}$ satisfies \eqref{3.12} and $\tilde f({{\tilde w}^k}(x)) = f({{\tilde w}^k}(x))$ for all $x \in {\mathbb{R}^N}\backslash \mathbb{R}_ + ^N$, i.e. ${{\tilde w}^k}$ is actually a ground state solution to \eqref{2.1}, similar to {\bf Case~1-2-2-2}, for some ${U^k} \in S$, ${z^k} \in \mathbb{R}_ + ^N$ such that ${{\tilde w}^k}(x) = {U^k}(x - {z^k})$.
\end{itemize}
Since $\langle {{J'_\varepsilon }({u_\varepsilon }),u_{\varepsilon ,1}^k} \rangle  \to 0$, ${\| {{u_{\varepsilon ,2}}} \|_{H_0^1({\Omega _\varepsilon })}} \to 0$ as $\varepsilon  \to 0$ and $\langle {{Q'_\varepsilon }({u_\varepsilon }),u_{\varepsilon ,1}^k} \rangle  = 0$, we see from \eqref{3.41} that in either {\bf Case~A} or {\bf Case~B},
\begin{equation}\label{3.45}
\begin{gathered}
\quad  \int_{{\Omega _\varepsilon } - (x_\varepsilon ^k/\varepsilon )} {|\nabla \tilde w_\varepsilon ^k{|^2}}  + \int_{{\Omega _\varepsilon } - (x_\varepsilon ^k/\varepsilon )} {|\tilde w_\varepsilon ^k{|^2}}  \hfill \\
   = \int_{{\Omega _\varepsilon } - (x_\varepsilon ^k/\varepsilon )} {{\chi _{{{({\Lambda ^k})}_\varepsilon }}}(x + (x_\varepsilon ^k/\varepsilon ))f(\tilde w_\varepsilon ^k)\tilde w_\varepsilon ^k}  \hfill \\
\quad   + \int_{{\Omega _\varepsilon } - (x_\varepsilon ^k/\varepsilon )} {(1 - {\chi _{{{({\Lambda ^k})}_\varepsilon }}}(x + (x_\varepsilon ^k/\varepsilon )))\tilde f(\tilde w_\varepsilon ^k)\tilde w_\varepsilon ^k}  + o(1) \hfill \\
   = \int_{{\mathbb{R}^N}} {f({{\tilde w}^k}){{\tilde w}^k}}  + o(1). \hfill \\
\end{gathered}
\end{equation}
Since ${{\tilde w}^k}$ satisfies \eqref{2.1}, then
\begin{equation}\label{3.46}
\int_{{\mathbb{R}^N}} {|\nabla {{\tilde w}^k}{|^2}}  + \int_{{\mathbb{R}^N}} {|{{\tilde w}^k}{|^2}}  = \int_{{\mathbb{R}^N}} {f({{\tilde w}^k}){{\tilde w}^k}} .
\end{equation}
From \eqref{3.45} and \eqref{3.46}, we see that for each $1 \le k \le K$,
\begin{equation}\label{3.47}
\tilde w_\varepsilon ^k \to {{\tilde w}^k}{\text{ in }}{H^1}({\mathbb{R}^N}).
\end{equation}
Note that, in either {\bf Case~A} or {\bf Case~B}, for each $1 \le k \le K$ and $\varepsilon >0$ small, $(x_\varepsilon ^k/\varepsilon ) + {z^k} \in {({\Lambda ^k})_\varepsilon }$, by \eqref{3.47}, we have
\[
{\left\| {{u_\varepsilon } - \sum\limits_{k = 1}^K {{\varphi _{\varepsilon ,k}}(x){U^k}(x - ((x_\varepsilon ^k/\varepsilon ) + {z^k}))} } \right\|_{H_0^1({\Omega _\varepsilon })}} \to 0{\text{ as }}\varepsilon  \to 0.
\]
\end{proof}

\begin{lemma}\label{3.5.}
For ${d_0}$ given in Proposition~\ref{3.4.} and any $d \in (0,{d_0})$, there exist ${\varepsilon _d} > 0$, ${\rho _d} > 0$ and ${\omega _d} > 0$ such that
\[
{\| {{J'_\varepsilon }(u)} \|_{{{(H_0^1({\Omega _\varepsilon }))}^{ - 1}}}} \ge {\omega _d}
\]
for all $u \in J_\varepsilon ^{Kc  + {\rho _d}} \cap (X_\varepsilon ^{{d_0}}\backslash X_\varepsilon ^d)$ with $\varepsilon  \in (0,{\varepsilon _d})$.
\end{lemma}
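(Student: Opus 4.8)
The plan is to argue by contradiction and to reduce the statement to Proposition~\ref{3.4.}, which is where the delicate $\varepsilon$-dependent concentration--compactness analysis in the critical regime has already been carried out. Fix $d\in(0,d_0)$ and suppose the conclusion fails. Then, instantiating the (negated) claim with the candidate parameters $\varepsilon_d=\rho_d=\omega_d=1/n$ for each $n\in\mathbb{N}$, one may pick $\varepsilon_n\in(0,1/n)$ and
\[
u_n\in J_{\varepsilon_n}^{Kc+1/n}\cap\bigl(X_{\varepsilon_n}^{d_0}\backslash X_{\varepsilon_n}^{d}\bigr),\qquad \bigl\|{J'_{\varepsilon_n}}(u_n)\bigr\|_{(H_0^1(\Omega_{\varepsilon_n}))^{-1}}<\tfrac{1}{n}.
\]

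First I would verify that $\{\varepsilon_n\}$ and $\{u_n\}$ meet all the hypotheses of Proposition~\ref{3.4.}. Indeed $\varepsilon_n\to0$; each $u_n$ lies in $X_{\varepsilon_n}^{d_0}$ with $d_0$ exactly the constant furnished by that proposition; since $X_{\varepsilon_n}^{d_0}$ has $H_0^1(\Omega_{\varepsilon_n})$-norm bounded by $\sup_{U\in S}\|U\|_{H^1}+d_0$ uniformly in $n$ (using the compactness of $S$ from Proposition~\ref{2.1.}), the critical-growth bound on $G_{\varepsilon_n}$ coming from $(F_1)$--$(F_2)$ shows $J_{\varepsilon_n}(u_n)$ is bounded below, so after passing to a subsequence $J_{\varepsilon_n}(u_n)$ converges, with limit $\le Kc$; and $\|{J'_{\varepsilon_n}}(u_n)\|_{(H_0^1(\Omega_{\varepsilon_n}))^{-1}}\to0$. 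Hence Proposition~\ref{3.4.} applies and, along a further subsequence, there exist $y_{\varepsilon_n}^k\in(\Lambda^k)_{\varepsilon_n}$ and $U^k\in S$ $(k=1,\dots,K)$ with
\[
\Bigl\|u_n-\sum_{k=1}^K{\varphi_{\varepsilon_n,k}}(x)\,U^k\bigl(x-y_{\varepsilon_n}^k\bigr)\Bigr\|_{H_0^1(\Omega_{\varepsilon_n})}\to0.
\]

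Next I would observe that the approximant on the left is a genuine member of $X_{\varepsilon_n}$: setting $z^k:=\varepsilon_n y_{\varepsilon_n}^k$, the inclusion $y_{\varepsilon_n}^k\in(\Lambda^k)_{\varepsilon_n}$ means precisely $z^k\in\Lambda^k$ and $y_{\varepsilon_n}^k=z^k/\varepsilon_n$, so $\sum_{k=1}^K{\varphi_{\varepsilon_n,k}}(x)U^k(x-(z^k/\varepsilon_n))\in X_{\varepsilon_n}$ by the very definition of $X_{\varepsilon_n}$. Consequently $\inf_{v\in X_{\varepsilon_n}}\|u_n-v\|_{H_0^1(\Omega_{\varepsilon_n})}\to0$, whence $u_n\in X_{\varepsilon_n}^{d}$ for all $n$ large. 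This contradicts $u_n\notin X_{\varepsilon_n}^{d}$ and proves the lemma.

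As for the difficulty: the lemma is a soft corollary of Proposition~\ref{3.4.}, and I do not expect a serious obstacle. The only points needing care are (a) repackaging the negation of a statement with three quantified positive parameters $(\varepsilon_d,\rho_d,\omega_d)$ into a single pair of sequences $(\varepsilon_n,u_n)$, and (b) reconciling the normalization $y_\varepsilon^k\in(\Lambda^k)_\varepsilon$ produced by Proposition~\ref{3.4.} with the parametrization $z^k\in\Lambda^k$ used in the definition of $X_\varepsilon$; both are routine. The genuinely hard analysis---handling the Sobolev-critical term in the $\varepsilon$-dependent Palais--Smale analysis via the Liouville-type nonexistence result and the second concentration--compactness principle---is entirely absorbed into the already-established Proposition~\ref{3.4.}.
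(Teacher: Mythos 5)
Your proof is correct and takes essentially the same route as the paper: argue by contradiction, extract a sequence of near-critical points with energy approaching $Kc$ staying in $X_{\varepsilon}^{d_0}\setminus X_{\varepsilon}^{d}$, apply Proposition~\ref{3.4.} to force proximity to $X_{\varepsilon}$, and conclude with the contradiction $u_n\in X_{\varepsilon_n}^{d}$. The extra care you take in checking the hypotheses of Proposition~\ref{3.4.} (boundedness of the energy and passage to a convergent subsequence) and in matching the parametrization $y_\varepsilon^k\in(\Lambda^k)_\varepsilon$ with $z^k\in\Lambda^k$ is sound, just spelled out more explicitly than in the paper.
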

\begin{proof}
Assuming on the contrary that, there exist $d \in (0,{d_0})$, $\{ {\varepsilon _j}\}  _{j = 1}^\infty$, $\{ {\rho _j}\}  _{j = 1}^\infty$ with ${\varepsilon _j}$, ${\rho _j} \to 0$ and ${u_{{\varepsilon _j}}} \in J_{{\varepsilon _j}}^{Kc + {\rho _j}} \cap (X_{{\varepsilon _j}}^{{d_0}}\backslash X_{{\varepsilon _j}}^d)$ such that
\[
{\| {{J'_{{\varepsilon _j}}}({u_{{\varepsilon _j}}})} \|_{{{(H_0^1({\Omega _{{\varepsilon _j}}}))}^{ - 1}}}} \to 0{\text{ as }}j \to \infty .
\]
By Proposition~\ref{3.4.}, for each $1 \le k \le K$, there exist, up to a subsequence, $\{ y_{{\varepsilon _j}}^k\} _{j = 1}^\infty  \subset {({\Lambda ^k})_{{\varepsilon _j}}}$, ${U^k} \in S$ such that
\[
\mathop {\lim }\limits_{j \to \infty } {\left\| {{u_{{\varepsilon _j}}} - \sum\limits_{k = 1}^K {{\varphi _{{\varepsilon _j},k}}(x){U^k}(x - y_{{\varepsilon _j}}^k)} } \right\|_{H_0^1({\Omega _{{\varepsilon _j}}})}} = 0,
\]
which gives that ${u_{{\varepsilon _j}}} \in X_{{\varepsilon _j}}^d$ for large $j$. This contradicts ${u_{{\varepsilon _j}}} \notin X_{{\varepsilon _j}}^d$.
\end{proof}

\begin{lemma}\label{3.6.}
There exists ${T_0} > 0$ with the following property: for any $\delta  > 0$ small, there exist ${\alpha _\delta } > 0$ and ${\varepsilon _\delta } > 0$ such that ${\gamma _{\varepsilon} }(s) \in X_\varepsilon ^{{T_0}\delta }$ if ${J_\varepsilon }({\gamma _{\varepsilon}}(s)) \ge Kc  - {\alpha _\delta }$ and $\varepsilon  \in (0,{\varepsilon _\delta })$.
\end{lemma}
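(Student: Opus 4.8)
The plan is to use that $\gamma_\varepsilon(s)=\sum_{k=1}^K W^k_{\varepsilon,s_kt_0}$ is a sum of $K$ bumps with mutually disjoint supports, each contained in $(\widetilde{\Lambda^k})_\varepsilon$ for $\varepsilon$ small (since $\mathrm{supp}\,\varphi_{\varepsilon,k}\subset(\widetilde{\Lambda^k})_\varepsilon$ and $\min_{i\ne j}d(\widetilde{\Lambda^i},\widetilde{\Lambda^j})>0$). On $(\widetilde\Lambda)_\varepsilon$ one has $\chi_\varepsilon\equiv0$, so $Q_\varepsilon(\gamma_\varepsilon(s))=0$, and on $(\widetilde{\Lambda^k})_\varepsilon$ one has $\chi_{\Lambda_\varepsilon}=\chi_{(\Lambda^k)_\varepsilon}$; hence the energy splits exactly, $J_\varepsilon(\gamma_\varepsilon(s))=\sum_{k=1}^K J_\varepsilon^k(W^k_{\varepsilon,s_kt_0})$. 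Moreover, arguing as in the proof of Lemma~\ref{3.2.} and as for \eqref{3.3} — the cutoff $\varphi_{\varepsilon,k}$ and the replacement of $f$ by $\tilde f$ affect $J_\varepsilon^k(W^k_{\varepsilon,t})$ only on sets whose distance from $z_*^k/\varepsilon$ tends to $+\infty$, and the resulting errors are controlled by the uniform exponential decay of Proposition~\ref{2.1.}$(iv)$ — one obtains, \emph{uniformly for} $t\in[0,t_0]$,
\[
J_\varepsilon^k\bigl(W^k_{\varepsilon,t}\bigr)=I\bigl(U_*^k(\cdot/t)\bigr)+o(1)=h(t)+o(1),\qquad h(t):=\Bigl(\tfrac12 t^{N-2}-\tfrac{N-2}{2N}t^N\Bigr)Nc .
\]

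By the computation displayed just before \eqref{3.3}, $h$ is strictly increasing on $[0,1]$, strictly decreasing on $[1,t_0]$, with $h(0)=0$ and $\max_{[0,t_0]}h=h(1)=c$. Hence for every $\eta>0$ there is $\beta(\eta)>0$ such that $t\in[0,t_0]$ with $c-h(t)<\beta(\eta)$ forces $|t-1|<\eta$. Now fix $\delta$ small, set $\eta:=\min\{1/2,\delta\}$, choose $\alpha_\delta\in(0,\beta(\eta)/2)$ with also $\alpha_\delta<c/2$, and then $\varepsilon_\delta>0$ so small that the $o(1)$ above is in absolute value $\le\alpha_\delta$ for $\varepsilon<\varepsilon_\delta$. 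If $J_\varepsilon(\gamma_\varepsilon(s))\ge Kc-\alpha_\delta$ with $\varepsilon<\varepsilon_\delta$, the splitting gives $\sum_{k=1}^K h(s_kt_0)\ge Kc-2\alpha_\delta$; since $h(s_jt_0)\le c$ for every $j$, this forces $c-h(s_kt_0)\le2\alpha_\delta<\beta(\eta)$, hence $|s_kt_0-1|<\eta$, for every $k=1,\dots,K$.

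It remains to compare $\gamma_\varepsilon(s)$ with $w_*:=\sum_{k=1}^K\varphi_{\varepsilon,k}(x)U_*^k(x-z_*^k/\varepsilon)\in X_\varepsilon$ (admissible since $z_*^k\in\mathcal{M}^k\subset\Lambda^k$ and $U_*^k\in S$). By the disjoint supports and translation invariance of the $H^1(\mathbb{R}^N)$-norm,
\[
\bigl\|\gamma_\varepsilon(s)-w_*\bigr\|_{H_0^1(\Omega_\varepsilon)}^2=\sum_{k=1}^K\bigl\|\varphi_{\varepsilon,k}\cdot\bigl(U_*^k(\cdot/(s_kt_0))-U_*^k\bigr)(\cdot-z_*^k/\varepsilon)\bigr\|_{H^1}^2 .
\]
From $0\le\varphi_{\varepsilon,k}\le1$ and $|\nabla\varphi_{\varepsilon,k}|\le C\varepsilon^{1/2}$ one gets $\|\varphi_{\varepsilon,k}v\|_{H^1}\le C_1\|v\|_{H^1(\mathbb{R}^N)}$ with $C_1$ independent of $\varepsilon$, and since $U_*^k\in H^2(\mathbb{R}^N)$ (elliptic regularity together with the bounds of Proposition~\ref{2.1.}$(iii)$ and $(iv)$) the scaling map $t\mapsto U_*^k(\cdot/t)$ is Lipschitz near $t=1$, $\|U_*^k(\cdot/t)-U_*^k\|_{H^1}\le C_*|t-1|$ for $|t-1|\le1/2$. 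Combining the last three displays, $\|\gamma_\varepsilon(s)-w_*\|_{H_0^1(\Omega_\varepsilon)}^2\le KC_1^2C_*^2\eta^2\le KC_1^2C_*^2\delta^2$, so with the fixed constant $T_0:=\sqrt K\,C_1C_*$, independent of $\delta$ and $\varepsilon$, we conclude $\gamma_\varepsilon(s)\in X_\varepsilon^{T_0\delta}$. The only delicate point is the uniformity in $t\in[0,t_0]$ of the expansion $J_\varepsilon^k(W^k_{\varepsilon,t})=h(t)+o(1)$: for $t$ in a compact subinterval of $(0,t_0]$ this is exactly the estimate behind Lemma~\ref{3.2.}, while the $s$ with some $s_kt_0$ near $0$ are automatically ruled out by the energy constraint once $\alpha_\delta<c/2$, as then $h(s_kt_0)$ would be close to $0$.
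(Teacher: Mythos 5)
Your proof is correct and reaches the same conclusion, but it organizes the argument differently from the paper, so a comparison is worthwhile. The paper defines $\alpha_\delta$ directly through a compactness argument: it sets $\alpha_\delta$ to be a quarter of the minimum of the energy deficit $Kc-\sum_k I(U_*^k(\cdot/(s_kt_0)))$ over the closed set of $s$'s for which the $H^1$ distance $\sum_k\|U_*^k(\cdot/(s_kt_0))-U_*^k\|_{H^1}$ exceeds $K\delta$, so that \eqref{3.49} follows immediately by construction; this needs nothing beyond continuity of $t\mapsto U_*^k(\cdot/t)$ in $H^1$ (true for any $H^1$ function). It then transfers the $H^1$ bound into $H_0^1(\Omega_\varepsilon)$ via \eqref{3.48}, exactly as you do with your constant $C_1$. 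You instead pass through the intermediate step of controlling the scaling parameter ($c-h(s_kt_0)<\beta(\eta)\Rightarrow|s_kt_0-1|<\eta$) and then convert this into an $H^1$ bound via a Lipschitz estimate $\|U_*^k(\cdot/t)-U_*^k\|_{H^1}\le C_*|t-1|$. This is more explicit and quantitative, but it invokes a stronger tool than is strictly needed: the Lipschitz estimate requires that $|x|\,|\nabla U_*^k|$ and $|x|\,|D^2U_*^k|$ lie in $L^2$, which is more than ``$U_*^k\in H^2$''; it does follow from the exponential decay of $U_*^k$ and $\nabla U_*^k$ (Proposition~\ref{2.1.}$(iv)$) combined with local $W^{2,p}$ estimates applied to $-\Delta U_*^k=f(U_*^k)-U_*^k$ (recall $f$ is only continuous, so $U_*^k$ is not $C^2$), but this deserves a line of justification. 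A mere uniform-continuity statement on the compact interval $[0,t_0]$, i.e.\ ``for each $\delta$ there is $\eta(\delta)$ with $|t-1|<\eta(\delta)\Rightarrow\|U_*^k(\cdot/t)-U_*^k\|_{H^1}<\delta$'', would do the same job with the constant $T_0=\sqrt{K}\,C_1$ and would avoid this issue entirely, which is essentially what the paper's definition of $\alpha_\delta$ achieves. Finally, the closing caveat about $s_kt_0$ near $0$ is phrased slightly circularly (``$h(s_kt_0)$ would be close to $0$'' presupposes the expansion you are worried about there): it is cleaner to observe either that the $o(1)$ in $J_\varepsilon^k(W^k_{\varepsilon,t})=I(U_*^k(\cdot/t))+o(1)$ is in fact uniform on all of $[0,t_0]$ (the cutoff and truncation errors are controlled by $e^{-c/(t\varepsilon^{1/2})}\le e^{-c/(t_0\varepsilon^{1/2})}$, which improves as $t\to0$), or that $\|W^k_{\varepsilon,t}\|_{H_0^1}$ is small for $t$ small, hence $J_\varepsilon^k(W^k_{\varepsilon,t})<c/2$ directly, without passing through $h$.
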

\begin{proof}
First, there is ${T'_0} > 0$ such that for each $1 \le k \le K$, $z_ * ^k \in {\mathcal{M}^k}$ and $u \in {H^1}({\mathbb{R}^N})$,
\begin{equation}\label{3.48}
{\| {{\varphi _{\varepsilon ,k}}(x)u(x - (z_*^k/\varepsilon ))} \|_{H_0^1({\Omega _\varepsilon })}} \le {T'_0}{\| {u(x)} \|_{{H^1}({\mathbb{R}^N})}}.
\end{equation}
Denoting
\[
\begin{array}{ll}
  {\alpha _\delta } = &\dis\frac{1}
{4}\min \Bigg \{ Kc - \sum\limits_{k = 1}^K {I(U_ * ^k(s_k^{ - 1}t_0^{ - 1}x))} :s = ({s_1},...,{s_K}) \in {[0,1]^K}, \hfill \\
 &\dis \sum\limits_{k = 1}^K {{{\| {U_ * ^k(s_k^{ - 1}t_0^{ - 1}x) - U_ * ^k(x)} \|}_{{H^1}({\mathbb{R}^N})}}}  \ge k\delta \Bigg\}  > 0, \hfill \\
\end{array}
\]
we have
\begin{equation}\label{3.49}
\sum\limits_{k = 1}^K {I(U_ * ^k(s_k^{ - 1}t_0^{ - 1}x))}  \ge Kc - 2{\alpha _\delta }{\text{ implies }}\sum\limits_{k = 1}^K {{{\| {U_ * ^k(s_k^{ - 1}t_0^{ - 1}x) - U_ * ^k(x)} \|}_{{H^1}({\mathbb{R}^N})}}}  \le k\delta .
\end{equation}
Direct computations show that
\[
\mathop {\max }\limits_{s \in {{[0,1]}^k}} \left| {{J_\varepsilon }({\gamma _\varepsilon }(s)) - \sum\limits_{k = 1}^K {I(U_ * ^k(s_k^{ - 1}t_0^{ - 1}x))} } \right| \to 0{\text{ as }}\varepsilon  \to 0,
\]
then, there exists an $\varepsilon _\delta >0$ such that
\begin{equation}\label{3.50}
\mathop {\max }\limits_{s \in {{[0,1]}^k}} \left| {{J_\varepsilon }({\gamma _\varepsilon }(s)) - \sum\limits_{k = 1}^K {I(U_ * ^k(s_k^{ - 1}t_0^{ - 1}x))} } \right| \le {\alpha _\delta }
\end{equation}
for all $\varepsilon  \in (0,{\varepsilon _\delta })$. Thus if $\varepsilon  \in (0,{\varepsilon _\delta })$ and ${J_\varepsilon }({\gamma _\varepsilon }(s)) \ge Kc - {\alpha _\delta }$, by \eqref{3.49} and \eqref{3.50}, we see that
\[
\sum\limits_{k = 1}^K {{{\left\| {U_ * ^k(s_k^{ - 1}t_0^{ - 1}x) - U_ * ^k(x)} \right\|}_{{H^1}({\mathbb{R}^N})}}}  \le k\delta ,
\]
and by \eqref{3.48},
\[
\begin{gathered}
\quad  {\left\| {\sum\limits_{k = 1}^K {W_{\varepsilon ,{s_k}{t_0}}^k}  - \sum\limits_{k = 1}^K {{\varphi _{\varepsilon ,k}}(x)U_ * ^k(x - (z_*^k/\varepsilon ))} } \right\|_{H_0^1({\Omega _\varepsilon })}} \hfill \\
   \le \sum\limits_{k = 1}^K {{{\left\| {{\varphi _{\varepsilon ,k}}(x)(U_ * ^k(s_k^{ - 1}t_0^{ - 1}(x - (z_*^k/\varepsilon ))) - U_ * ^k(x - (z_*^k/\varepsilon )))} \right\|}_{H_0^1({\Omega _\varepsilon })}}}  \hfill \\
   \le {{T'}_0}\sum\limits_{k = 1}^K {{{\left\| {U_ * ^k(s_k^{ - 1}t_0^{ - 1}x) - U_ * ^k(x)} \right\|}_{{H^1}({\mathbb{R}^N})}}}  \le k{T'_0}\delta : = {T_0}\delta . \hfill \\
\end{gathered}
\]
Thus ${\gamma _\varepsilon }(s) \in X_\varepsilon ^{{T_0}\delta }$.
\end{proof}

Choosing ${\delta _1} > 0$ to ensure that ${T_0}{\delta _1} < {d_0}/4$, let $\bar \alpha  = \min \{ {\alpha _{{\delta _1}}},\sigma \} $ and $d = {d_0}/4: = {d_1}$ in Lemma~\ref{3.5.}. To prove the next lemma, we use an idea developed in \cite{fis}. However, to construct multi-peak solutions which concentrate around the prescribed finite sets of local maxima of the distance function $d( \cdot ,\partial \Omega )$, we give a more complicated proof than that in \cite{fis}, where only the single-peak solutions which concentrate around local minimum points of the potential function were considered.
\begin{lemma}\label{3.7.}
There exists $\bar \varepsilon  > 0$ such that for each $\varepsilon  \in (0,\bar \varepsilon ]$, there exists a sequence $\{ {v_{n,\varepsilon }}\} _{n = 1}^\infty  \subset J_\varepsilon ^{\widetilde{{c_\varepsilon }} + \sum\nolimits_{k = 1}^K {\exp ( - 2{D_k}/\varepsilon )} } \cap X_\varepsilon ^{{d_0}}$
 such that ${J'_\varepsilon }({v_{n,\varepsilon }}) \to 0$ in ${(H_0^1({\Omega _\varepsilon }))^{ - 1}}$ as $n \to \infty $.
\end{lemma}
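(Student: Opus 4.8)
The plan is to argue by contradiction, deforming the path $\gamma_\varepsilon$ downward. Write $\mu_\varepsilon:=\sum_{k=1}^{K}\exp(-2D_k/\varepsilon)$. If the conclusion fails there is $\varepsilon_j\to0$ such that no sequence as claimed exists for $\varepsilon=\varepsilon_j$; fixing one such $\varepsilon$, this means $\|J'_\varepsilon(u)\|_{(H_0^1(\Omega_\varepsilon))^{-1}}\ge b$ for all $u\in J_\varepsilon^{\widetilde{c_\varepsilon}+\mu_\varepsilon}\cap X_\varepsilon^{d_0}$, for some $b=b_\varepsilon>0$. Since $\widetilde{c_\varepsilon}=Kc+o(1)$ by Lemma~\ref{3.1.}(i) and $\mu_\varepsilon\to0$, for $\varepsilon$ small $\widetilde{c_\varepsilon}+\mu_\varepsilon\le Kc+\rho_{d_1}$, so Lemma~\ref{3.5.} (with $d=d_1$) supplies the extra, $b$-independent bound $\|J'_\varepsilon(u)\|\ge\omega_{d_1}$ on $(X_\varepsilon^{d_0}\setminus X_\varepsilon^{d_1})\cap J_\varepsilon^{Kc+\rho_{d_1}}$.

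I would then run a truncated negative pseudo-gradient flow. Put $\Delta_\varepsilon:=\big(\widetilde{c_\varepsilon}-\sum_{k=1}^{K}c_\varepsilon^k\big)+\mu_\varepsilon$, which is positive and tends to $0$ (using $c_\varepsilon^k\le\widetilde{c_\varepsilon^k}$ and $\widetilde{c_\varepsilon}=\sum_k\widetilde{c_\varepsilon^k}$). On $\mathcal N:=J_\varepsilon^{\widetilde{c_\varepsilon}+\mu_\varepsilon}\cap X_\varepsilon^{d_0}$ choose a locally Lipschitz field $\mathcal V$ with $\|\mathcal V\|\le1$ and $\langle J'_\varepsilon,\mathcal V\rangle\ge\kappa_0>0$, multiply it by a locally Lipschitz cut-off $\psi$ equal to $1$ on $\{J_\varepsilon\ge\widetilde{c_\varepsilon}-2\Delta_\varepsilon\}\cap X_\varepsilon^{d_0-\kappa}$ and $0$ off $\{J_\varepsilon\ge\widetilde{c_\varepsilon}-3\Delta_\varepsilon\}\cap X_\varepsilon^{d_0}$ (small $\kappa>0$), and let $\eta(t,\cdot)$ be the flow of $-\psi\mathcal V$. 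Then $J_\varepsilon$ is non-increasing along $\eta$, and since $J_\varepsilon\le\widetilde{c_\varepsilon}$ along each trajectory $t\mapsto\eta(t,\gamma_\varepsilon(s))$ while $\psi>0$ forces $\eta\in X_\varepsilon^{d_0}$, the field $\mathcal V$ is defined wherever the flow moves. By Lemma~\ref{3.1.}(ii), $\psi\equiv0$ along the flow on $\partial[0,1]^K$, so $\widetilde\gamma_\varepsilon:=\eta(T,\gamma_\varepsilon(\cdot))$ will agree with $\gamma_\varepsilon$ there; and wherever $J_\varepsilon(\gamma_\varepsilon(s))\ge\widetilde{c_\varepsilon}-2\Delta_\varepsilon$, which for $\varepsilon$ small exceeds $Kc-\alpha_{\delta_1}$, Lemma~\ref{3.6.} puts $\gamma_\varepsilon(s)$ in $X_\varepsilon^{T_0\delta_1}\subset X_\varepsilon^{d_1}$, so the flow engages that part. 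Moreover, the fixed descent rate $\omega_{d_1}$ of Lemma~\ref{3.5.} over the fixed-width annulus $X_\varepsilon^{d_0-\kappa}\setminus X_\varepsilon^{d_1}$ forces any trajectory drifting outward to drop below $\widetilde{c_\varepsilon}-3\Delta_\varepsilon$, hence to freeze, before it can reach $\partial X_\varepsilon^{d_0}$; so $\widetilde\gamma_\varepsilon$ never leaves $X_\varepsilon^{d_0}$. Splitting the points $\gamma_\varepsilon(s)$ according to their initial energy, a routine analysis then gives, for $T$ large enough, $\max_{s\in[0,1]^K}J_\varepsilon(\widetilde\gamma_\varepsilon(s))\le\widetilde{c_\varepsilon}-2\Delta_\varepsilon$. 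Finally, as $J_\varepsilon=E_\varepsilon+Q_\varepsilon$ is non-increasing along $\eta$ while $J_\varepsilon(\gamma_\varepsilon(s))=E_\varepsilon(\gamma_\varepsilon(s))$ is bounded (the path carries no mass outside $(\widetilde\Lambda)_\varepsilon$) and $E_\varepsilon$ is bounded below on bounded sets, $Q_\varepsilon(\eta(t,\gamma_\varepsilon(s)))$ stays bounded; hence $\int_{\Omega_\varepsilon\setminus(\widetilde\Lambda)_\varepsilon}|\eta(t,\gamma_\varepsilon(s))|^{2^*}=O(\varepsilon)$ and, by interpolation with the uniform $L^2$-bound, $\int_{\Omega_\varepsilon\setminus(\widetilde\Lambda)_\varepsilon}|\widetilde\gamma_\varepsilon(s)|^{p}=o(1)$ for all $2<p\le2^*$ — the property flagged in the Remark.

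The contradiction comes from a linking lower bound. Because $\min_{i\ne j}d(\widetilde{\Lambda^i},\widetilde{\Lambda^j})>0$ and $R_0>\max_kD_k$ by \eqref{add1}, along any continuous $\widetilde\gamma:[0,1]^K\to H_0^1(\Omega_\varepsilon)$ agreeing with $\gamma_\varepsilon$ on $\partial[0,1]^K$ and carrying $o(1)$ mass outside $(\widetilde\Lambda)_\varepsilon$, the functional decouples into $K$ essentially independent one-variable mountain passes: localizing via the cut-offs $\varphi_{\varepsilon,k}$ and estimating cross-interactions by the exponential decay in Proposition~\ref{2.1.}(iv), $J_\varepsilon(\widetilde\gamma(s))=\sum_{k=1}^{K}J_\varepsilon^k(\varphi_{\varepsilon,k}\widetilde\gamma(s))+o(\mu_\varepsilon)$, and a topological degree argument in $s\in[0,1]^K$ (the faces $s_k=0$, $s_k=1$ being separated by the mountain of $J_\varepsilon^k$, cf.\ \eqref{3.3}) yields $\max_{s}J_\varepsilon(\widetilde\gamma(s))\ge\sum_{k=1}^{K}c_\varepsilon^k-o(\mu_\varepsilon)$. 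Since $\sum_k c_\varepsilon^k=\widetilde{c_\varepsilon}-\Delta_\varepsilon+\mu_\varepsilon$, this gives $\max_sJ_\varepsilon(\widetilde\gamma(s))\ge\widetilde{c_\varepsilon}-\Delta_\varepsilon+\mu_\varepsilon-o(\mu_\varepsilon)>\widetilde{c_\varepsilon}-\Delta_\varepsilon$ for $\varepsilon$ small, contradicting $\max_sJ_\varepsilon(\widetilde\gamma_\varepsilon(s))\le\widetilde{c_\varepsilon}-2\Delta_\varepsilon$ since $\Delta_\varepsilon>0$. Hence a Palais--Smale sequence as in the statement exists, and as $\widetilde{c_\varepsilon}\le\widetilde{c_\varepsilon}+\mu_\varepsilon$ it sits at the asserted level.

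I expect the genuine work to be the decoupling $J_\varepsilon(\widetilde\gamma)=\sum_k J_\varepsilon^k(\varphi_{\varepsilon,k}\widetilde\gamma)+o(\mu_\varepsilon)$ together with the degree argument: this is exactly where the nonlocal penalizations $Q_\varepsilon,Q_\varepsilon^k$ are indispensable, since they force the $L^p$-mass ($2<p\le2^*$) to remain inside $(\widetilde\Lambda)_\varepsilon$ — so that the $K$ pieces genuinely separate — while the cross terms and the boundary corrections must be shown to be $o(\mu_\varepsilon)$ using Proposition~\ref{2.1.}(iv) and the quantitative separation from \eqref{add1} and $(H_2)$. A secondary point is making the flow admissible with no a priori control on $b$: one plays the fixed descent rate $\omega_{d_1}$ of Lemma~\ref{3.5.} on the fixed-width annulus against the (small) constant $\alpha_{\delta_1}$ of Lemma~\ref{3.6.}, after shrinking $\delta_1$ (allowed since only $T_0\delta_1<d_0/4$ is needed), so that a trajectory drifting toward $\partial X_\varepsilon^{d_0}$ is pushed below the relevant energy level before it can escape.
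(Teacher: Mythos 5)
Your strategy coincides with the paper's in outline: argue by contradiction, deform $\gamma_\varepsilon$ with a truncated pseudo-gradient flow (using Lemmas~\ref{3.5.} and \ref{3.6.} to keep the flow in $X_\varepsilon^{d_0}$), then obtain a lower bound on $\max_s J_\varepsilon$ along the deformed curve by splitting it into $K$ local pieces and invoking a Coti Zelati--Rabinowitz / degree gluing argument. The gap is in the announced error estimate for the splitting. You aim for a descent only of size $2\Delta_\varepsilon$ with $\Delta_\varepsilon = (\widetilde{c_\varepsilon} - \sum_k c_\varepsilon^k) + \mu_\varepsilon$, which can be as small as $\mu_\varepsilon = \sum_k \exp(-2D_k/\varepsilon)$ (exponentially small). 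That forces the companion claim $J_\varepsilon(\widetilde\gamma(s)) = \sum_k J_\varepsilon^k(\varphi_{\varepsilon,k}\widetilde\gamma(s)) + o(\mu_\varepsilon)$, and you suggest Proposition~\ref{2.1.}(iv) and $(H_2)$ will deliver the $o(\mu_\varepsilon)$ bound. But that exponential decay information lives in the \emph{original} path, which is a sum of cut-off translated ground states; the \emph{deformed} path is not of that form, and the only handle on its mass outside $(\widetilde\Lambda)_\varepsilon$ is the uniform bound on $Q_\varepsilon(\widetilde\gamma(s))$, which gives $\int_{\Omega_\varepsilon\setminus(\widetilde\Lambda)_\varepsilon}|\widetilde\gamma(s)|^{2^*}\le C\varepsilon$ and, after interpolation, an $o(1)$ (in fact $O(\varepsilon^\theta)$, $\theta>0$) error in $L^p$. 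That error is polynomial in $\varepsilon$, vastly larger than $o(\mu_\varepsilon)$, so the contradiction you set up does not close.

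The paper sidesteps this by never trying to match the exponential scale. It drives $\max_s J_\varepsilon$ below $Kc - \bar\mu + o(1)$ for a \emph{fixed} $\bar\mu = \min\{\bar\alpha/2,\omega_{d_1}d_0/4\} > 0$ (the flow is run for time $r_1 = \omega_{d_1}d_0/\gamma(\varepsilon)$, and the three cases in the paper's proof are exactly the case split on whether and how the trajectory crosses the annulus $X_\varepsilon^{3d_0/4}\setminus X_\varepsilon^{d_0/4}$). On the other side, the gluing step is only used to produce $\bar s_\varepsilon$ with $J_\varepsilon^k(\xi_{\varepsilon,1}^k(\bar s_\varepsilon))\ge c_\varepsilon^k$ for all $k$, and then the crude lower bound $c_\varepsilon^k\ge c$ is invoked, so that the split-functional estimate only needs an $o(1)$ error (which the $Q_\varepsilon$ penalization does deliver, via \eqref{3.58}--\eqref{3.62}) to reach $\max_s J_\varepsilon(\xi_\varepsilon(s))\ge Kc + o(1)$. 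The two sides are then separated by the fixed constant $\bar\mu$, and the $o(1)$'s are harmless. To repair your argument you should replace the $\Delta_\varepsilon$-scale descent with a fixed-scale descent and relax the decoupling error to $o(1)$ — after which you are, in substance, reproducing the paper's proof.
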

\begin{proof}
Assuming on the contrary that there always exist $\varepsilon  > 0$ small and $\gamma (\varepsilon) > 0$ such that
\begin{equation}\label{3.51}
{\| {{J'_\varepsilon }(u)} \|_{{{(H_0^1({\Omega _\varepsilon }))}^{ - 1}}}} \ge \gamma (\varepsilon ) > 0
\end{equation}
for $u \in J_\varepsilon ^{\widetilde{{c_\varepsilon }} + \sum\nolimits_{k = 1}^K {\exp ( - 2{D_k}/\varepsilon )} } \cap X_\varepsilon ^{{d_0}}$.

Let $Y$ be a pseudo-gradient vector field for ${{J'_{\varepsilon }}}$ in $H_0^1({\Omega _\varepsilon })$, i.e. $Y:H_0^1({\Omega _\varepsilon }) \to H_0^1({\Omega _\varepsilon })$ is a locally Lipschitz continuous vector field such that for every $u \in H_0^1({\Omega _\varepsilon })$,
\begin{equation}\label{3.52}
{\| {Y(u)} \|_{H_0^1({\Omega _\varepsilon })}} \le 2{\| {{J'_\varepsilon }(u)} \|_{{{(H_0^1({\Omega _\varepsilon }))}^{ - 1}}}}{\text{ and }}\langle {J'_\varepsilon }(u),Y(u)\rangle  \ge \| {{J'_\varepsilon }(u)} \|_{{{(H_0^1({\Omega _\varepsilon }))}^{ - 1}}}^2,
\end{equation}
and ${\psi _1}$, ${\psi _2}$ be locally Lipschitz continuous functions in $H_0^1({\Omega _\varepsilon })$ such that $0 \le {\psi _1},{\psi _2} \le 1$ and
\[
{\psi _1}(u) = \left\{ \begin{array}{ll}
  1, &Kc  - \frac{1}
{2}{\bar \alpha} \le {J_\varepsilon }(u) \le \widetilde{{c_\varepsilon }},
\vspace{0.2cm}\\
  0, &{J_\varepsilon }(u) \le Kc  - {{\bar \alpha}}{\text{ or }}  {{\widetilde{{c_\varepsilon }} + \sum\nolimits_{k = 1}^K {\exp ( - 2{D_k}/\varepsilon )} }}  \le {J_\varepsilon }(u),
\end{array}  \right.
\]
\[
{\psi _2}(u) = \left\{ \begin{array}{ll}
  1,&u \in X_\varepsilon ^{3{d_0}/4},
\vspace{0.2cm}\\
  0,&u \notin X_\varepsilon ^{{d_0}}.
\end{array} \right.
\]
Consider the following Cauchy problem
\begin{equation}\label{3.53}
\left\{ \begin{gathered}
  \frac{d}
{{dr}}\eta (r,u) =  - \frac{{Y(\eta (r,u))}}
{{{{\| {Y(\eta (r,u))} \|}_{H_0^1({\Omega _\varepsilon })}}}}{\psi _1}(\eta (r,u)){\psi _2}(\eta (r,u)), \hfill \\
  \eta (0,u) = u. \hfill \\
\end{gathered}  \right.
\end{equation}
By \eqref{3.52} and \eqref{3.53}, we have
\[
\begin{array}{ll}
  \frac{d}
{{dr}}{J_\varepsilon }(\eta (r,u)) &=\dis \left\langle {{J'_\varepsilon }(\eta (r,u)),\frac{d}
{{dr}}\eta (r,u)} \right\rangle  \hfill \\
   &=\dis \left\langle {{J'_\varepsilon }(\eta (r,u)), - \frac{{Y(\eta (r,u))}}
{{{{\| {Y(\eta (r,u))} \|}_{H_0^1({\Omega _\varepsilon })}}}}{\psi _1}(\eta (r,u)){\psi _2}(\eta (r,u))} \right\rangle  \hfill \\
   &\le \dis  - \frac{{{\psi _1}(\eta (r,u)){\psi _2}(\eta (r,u))}}
{{{{\| {Y(\eta (r,u))} \|}_{H_0^1({\Omega _\varepsilon })}}}}\| {{J'_\varepsilon }(\eta (r,u))} \|_{(H_0^1({\Omega _\varepsilon }))^{ - 1}}^2 \hfill \\
   &\le \dis - \frac{1}
{2}{\psi _1}(\eta (r,u)){\psi _2}(\eta (r,u)){\| {{J'_\varepsilon }(\eta (r,u))} \|_{(H_0^1({\Omega _\varepsilon }))^{ - 1}}}. \hfill \\
\end{array}
\]
In view of Lemma~\ref{3.5.}, \eqref{3.51}, \eqref{3.53} and the definition of ${\psi _1}$, ${\psi _2}$, we see that $\eta  \in C([0, + \infty ) \times H_0^1({\Omega _\varepsilon }),H_0^1({\Omega _\varepsilon }))$ and satisfies that for $\varepsilon  > 0$ small,\\
$(i)$ $\frac{d}
{{dr}}{J_{\varepsilon }}(\eta (r,u)) \le 0$ for each $r \in [0,+\infty )$ and $u \in H_0^1({\Omega _\varepsilon })$;\\
$(ii)$ $\frac{d}
{{dr}}{J_\varepsilon }(\eta (r,u)) \le  -{\omega _{{d_1}}}/2$ if $\eta (r,u) \in \overline {J_\varepsilon ^{{{\tilde c}_\varepsilon }}\backslash J_\varepsilon ^{Kc - \frac{1}
{2}\bar \alpha }}  \cap \overline {X_\varepsilon ^{3{d_0}/4}\backslash X_\varepsilon ^{{d_0}/4}} $;\\
$(iii)$ $\frac{d}
{{dr}}{J_\varepsilon }(\eta (r,u)) \le -\gamma (\varepsilon )/2$ if $\eta (r,u) \in \overline {J_\varepsilon ^{{{\tilde c}_{\varepsilon }}}\backslash J_\varepsilon ^{Kc  - \frac{1}
{2}{\bar \alpha}}}  \cap X_\varepsilon ^{3{d_0}/4}$;\\
$(iv)$ $\eta (r,u) = u$ if ${J_\varepsilon }(u) \le Kc  - {\bar \alpha}$.

Letting ${r_1}: = {\omega _{{d_1}}}{d_0}/\gamma (\varepsilon )$ and ${\xi _\varepsilon }(s): = \eta ({r_1},{\gamma _{\varepsilon }}(s))$, where ${\gamma _{\varepsilon }}(s)$ has been mentioned in \eqref{3.4}, we have the following cases.

\textbf{Case 1}. ${\gamma _\varepsilon }(s) \in J_\varepsilon ^{Kc - \bar \alpha }$. By $(iv)$, we see that
\begin{equation}\label{3.54}
\eta (r,{\gamma _{\varepsilon}}(s)) = {\gamma _{\varepsilon }}(s).
\end{equation}

\textbf{Case 2}. ${\gamma _\varepsilon }(s) \notin J_\varepsilon ^{Kc - \bar \alpha }$. By Lemma~\ref{3.6.} and the definition of $\widetilde{{c_\varepsilon }}$, we see that
\[
{\gamma _\varepsilon }(s) \in \overline {J_\varepsilon ^{\widetilde{{c_\varepsilon }}}\backslash J_\varepsilon ^{Kc - \bar \alpha }}  \cap X_\varepsilon ^{{d_0}/4}.
\]
Moreover, we have
\begin{equation}\label{3.55}
\eta (r,{\gamma _\varepsilon }(s)) \in X_\varepsilon ^{{d_0}}{\text{ for }}r \in [0,{r_1}].
\end{equation}
Indeed, if not, for some $ r' \in [0,{r_1}]$, $\eta (r',{\gamma _\varepsilon }(s)) \notin X_\varepsilon ^{{d_0}}$. Denote
\[
r'': = \sup \left\{ {r \in [0,r']:\eta (r,{\gamma _{\varepsilon }}(s)) \in X_\varepsilon ^{{d_0}}} \right\},
\]
by \eqref{3.53} and the definition of ${\psi _2}$, we see $\eta (r',{\gamma _{\varepsilon }}(s)) = \eta (r'',{\gamma _{\varepsilon }}(s)) \in X_\varepsilon ^{{d_0}}$, which is a contradiction.

Next, we divide \textbf{Case 2} into the following three subcases.

\textbf{Case 2.1}. $\eta ({r_1},{\gamma _\varepsilon }(s)) \in J_\varepsilon ^{Kc - \frac{1}
{2}\bar \alpha }$;

\textbf{Case 2.2}. $\eta ({r_1},{\gamma _\varepsilon }(s)) \in J_\varepsilon ^{\widetilde{{c_\varepsilon }}}\backslash J_\varepsilon ^{Kc - \frac{1}
{2}\bar \alpha }$ and $\eta (r,{\gamma _\varepsilon }(s)) \notin X_\varepsilon ^{3{d_0}/4}$ for some $r \in [0,{r_1}]$;

\textbf{Case 2.3}. $\eta ({r_1},{\gamma _\varepsilon }(s)) \in J_\varepsilon ^{\widetilde{{c_\varepsilon }}}\backslash J_\varepsilon ^{Kc - \frac{1}
{2}\bar \alpha }$ and $\eta (r,{\gamma _\varepsilon }(s)) \in X_\varepsilon ^{3{d_0}/4}$ for all $r \in [0,{r_1}]$.

In \textbf{Case 2.2}, denote
\[
{r_2}: = \inf \left\{ {r \in [0,{r_1}]:\eta (r,{\gamma _{\varepsilon }}(s)) \notin X_\varepsilon ^{3{d_0}/4}} \right\}
\]
and
\[
{r_3}: = \sup \left\{ {r \in [0,{r_2}]:\eta (r,{\gamma _{\varepsilon }}(s)) \in X_\varepsilon ^{{d_0}/4}} \right\},
\]
then by \eqref{3.53}, ${r_2} - {r_3} \ge \frac{1}
{2}{d_0}$ and $\eta (r,{\gamma _{\varepsilon }}(s)) \in \overline {X_\varepsilon ^{3{d_0}/4}\backslash X_\varepsilon ^{{d_0}/4}} $ for each $r \in [{r_3},{r_2}]$. By $(i)$, $(ii)$ and Lemma~\ref{3.1.}(i), we obtain
\[
\begin{array}{ll}
{J_\varepsilon }(\eta ({r_1},{\gamma _{\varepsilon }}(s))) &= \dis {J_\varepsilon }({\gamma _{\varepsilon }}(s)) + \int_0^{{r_1}} {\frac{d}
{{dr}}{J_\varepsilon }(\eta (r,{\gamma _{\varepsilon }}(s)))} ds \hfill \\
   &\le \dis \widetilde{{c_\varepsilon }} + \int_{{r_3}}^{{r_2}} {\frac{d}
{{dr}}{J_\varepsilon }(\eta (r,{\gamma _{\varepsilon }}(s)))} ds \hfill \\
   &\le \dis {\widetilde{{c_\varepsilon }}} - \frac{1}
{4}{\omega _{{d_1}}}{d_0} = Kc  - \frac{1}
{4}{\omega _{{d_1}}}{d_0} + o(1), \hfill \\
\end{array}
\]
where $ o(1) \to 0$ as $\varepsilon \to 0$.

In \textbf{Case 2.3}, by $(iii)$ and the definition of $r_1$, we have
\[
\begin{array}{ll}
  {J_\varepsilon }(\eta ({r_1},{\gamma _{\varepsilon }}(s))) \dis&= {J_\varepsilon }({\gamma _{\varepsilon }}(s)) + \dis\int_0^{{r_1}} {\frac{d}
{{dr}}{J_\varepsilon }(\eta (r,{\gamma _{\varepsilon }}(s)))} ds \hfill \\
    &\dis \le \widetilde{{c_\varepsilon }} - \frac{1}
{2}{\omega _{{d_1}}}{d_0} \dis = Kc  - \frac{1}
{2}{\omega _{{d_1}}}{d_0} + o(1). \hfill \\
\end{array}
\]
To sum up, choosing $\bar \mu  = \min \{ {{\bar \alpha}/2,{\omega _{{d_1}}}{d_0}/4} \} > 0$, we see that, for ${s \in {{[0,1]}^k}}$,
\begin{equation}\label{3.56}
{J_\varepsilon }({\xi _\varepsilon }(s)) \le Kc  - \bar \mu  + o(1).
\end{equation}
From \eqref{3.54} and \eqref{3.55}, we have
\begin{equation}\label{3.57}
{\| {{\xi _\varepsilon }(s)} \|_{H_0^1({\Omega _\varepsilon })}} \le C{\text{ for }}\varepsilon  > 0{\text{ small and }}s \in {[0,1]^k}.
\end{equation}
Choose $\beta_0  > 0$ small to ensure that
\begin{equation}\label{add2}
\beta_0  < \frac{1}
{2}\mathop {\min }\limits_{i \ne j} d({\Lambda ^i},{\Lambda ^j}) - {R_0},
\end{equation}
where $R_0$ has been mentioned in \eqref{add1} and let $\psi  \in C_c^\infty ({\mathbb{R}^N},[0,1])$ be such that $\psi (x) = 1$ for $x \in \widetilde\Lambda $, $\psi (x) = 0$ for $x \notin {(\widetilde\Lambda )^{{\beta _0}}}$ and $|\nabla \psi | \le C/\beta_0 $. We denote ${\xi _{\varepsilon ,1}}(s): = {\psi _\varepsilon }{\xi _\varepsilon }(s)$ and ${\xi _{\varepsilon ,2}}(s): = (1 - {\psi _\varepsilon }){\xi _\varepsilon }(s)$, where ${\psi _\varepsilon }(x) = \psi (\varepsilon x)$. From \eqref{3.56} and \eqref{3.57}, we see that
${Q_\varepsilon }({\xi _\varepsilon }(s))$ is uniformly bounded, then for $\varepsilon>0$ small and $s \in {[0,1]^k}$,
\begin{equation}\label{3.58}
\int_{{\Omega _\varepsilon }\backslash {{(\widetilde\Lambda )}_\varepsilon }} {|{\xi _\varepsilon }(s){|^{{2^ * }}}}  \le C\varepsilon .
\end{equation}
By interpolation inequality and \eqref{3.57}, we see that for each $p \in (2,{2^*}]$,
\begin{equation}\label{3.59}
\begin{array}{ll}
\dis  \int_{{\Omega _\varepsilon }} {|{\xi _{\varepsilon ,2}}(s){|^p}}  &\le \dis \left\| {{\xi _{\varepsilon ,2}}(s)} \right\|_{{L^{{2^*}}}({\Omega _\varepsilon })}^{\frac{{p - 2}}
{2}N}\left\| {{\xi _{\varepsilon ,2}}(s)} \right\|_{{L^2}({\Omega _\varepsilon })}^{2 + \frac{{p - 2}}
{2}(2 - N)} \hfill \\
   &\le \dis C\left\| {{\xi _\varepsilon }(s)} \right\|_{{L^{{2^*}}}({\Omega _\varepsilon }\backslash {{(\widetilde\Lambda )}_\varepsilon })}^{\frac{{p - 2}}
{2}N} = o(1), \hfill \\
\end{array}
\end{equation}
where $o(1) \to 0$ as $\varepsilon  \to 0$ uniformly for $s \in {[0,1]^k}$. By \eqref{3.57} and \eqref{3.59}, we have
\[
{J_\varepsilon }({\xi _\varepsilon }(s)) \ge {J_\varepsilon }({\xi _{\varepsilon ,1}}(s)) + {Q_\varepsilon }({\xi _\varepsilon }(s)) - {Q_\varepsilon }({\xi _{\varepsilon ,1}}(s)) + o(1).
\]
Since for $A$, $B \ge 0$, ${(A + B - 1)_ + } \ge {(A - 1)_ + } + {(B - 1)_ + }$, it follows that
\begin{equation}\label{3.60}
\begin{array}{ll}
{Q_\varepsilon }({\xi _\varepsilon }(s)) &= \dis\Bigl( {{\varepsilon ^{ - 1}}\int_{{\Omega _\varepsilon }\backslash {{(\widetilde\Lambda )}_\varepsilon }} {|{\xi _{\varepsilon ,1}}(s) + {\xi _{\varepsilon ,2}}(s){|^{{2^ * }}}}  - 1} \Bigr)_ + ^2 \hfill \\
   &\ge \dis\Bigl( {{\varepsilon ^{ - 1}}\int_{{\Omega _\varepsilon }\backslash {{(\widetilde\Lambda )}_\varepsilon }} {|{\xi _{\varepsilon ,1}}(s){|^{{2^ * }}}}  + {\varepsilon ^{ - 1}}\int_{{\Omega _\varepsilon }\backslash {{(\widetilde\Lambda )}_\varepsilon }} {|{\xi _{\varepsilon ,2}}(s){|^{{2^ * }}}}  - 1} \Bigr)_ + ^2 \hfill \\
  & \ge\dis \Bigl( {{\varepsilon ^{ - 1}}\int_{{\Omega _\varepsilon }\backslash {{(\widetilde\Lambda )}_\varepsilon }} {|{\xi _{\varepsilon ,1}}(s){|^{{2^ * }}}}  - 1} \Bigr)_ + ^2 + \Bigl( {{\varepsilon ^{ - 1}}\int_{{\Omega _\varepsilon }\backslash {{(\widetilde\Lambda )}_\varepsilon }} {|{\xi _{\varepsilon ,2}}(s){|^{{2^ * }}}}  - 1} \Bigr)_ + ^2 \hfill \\
   &= \dis{Q_\varepsilon }({\xi _{\varepsilon ,1}}(s)) + {Q_\varepsilon }({\xi _{\varepsilon ,2}}(s)). \hfill \\
\end{array}
\end{equation}
Thus,
\begin{equation}\label{3.61}
{J_\varepsilon }({\xi _\varepsilon }(s)) \ge {J_\varepsilon }({\xi _{\varepsilon ,1}}(s)) + o(1).
\end{equation}
We define $\xi _{\varepsilon ,1}^k(s)(x) = {\xi _{\varepsilon ,1}}(s)(x)$ for $x \in (\widetilde{{\Lambda ^k}})_\varepsilon ^{{\beta_0} /\varepsilon }$, $\xi _{\varepsilon ,1}^k(s)(x) = 0$ for $x \notin (\widetilde{{\Lambda ^k}})_\varepsilon ^{{\beta_0} /\varepsilon }$ for each $1 \le k \le K$. Similar to \eqref{3.60}, we get
\begin{equation}\label{3.62}
{J_\varepsilon }({\xi _{\varepsilon ,1}}(s)) \ge \sum\limits_{k = 1}^K {{J_\varepsilon }(\xi _{\varepsilon ,1}^k(s))}  + o(1) = \sum\limits_{k = 1}^K {J_\varepsilon ^k(\xi _{\varepsilon ,1}^k(s))}  + o(1).
\end{equation}

Next, we introduce some notations as in \cite{cr}. For $s = ({s_1},...,{s_K}) \in {[0,1]^K}$, letting ${0_k} = ({s_1},.,{s_{k - 1}},0,{s_{k + 1}},.,{s_K})$ and ${1_k} = ({s_1},.,{s_{k - 1}},1,{s_{k + 1}},.,{s_K})$, we see from Lemma~\ref{3.1.}(ii) and $(iv)$ in the proof of Lemma~\ref{3.7.} that $J_\varepsilon ^k(\xi _{\varepsilon ,1}^k({0_k})) = J_\varepsilon ^k(0) = 0$ and $J_\varepsilon ^k(\xi _{\varepsilon ,1}^k({1_k})) = J_\varepsilon ^k(W_{\varepsilon ,{t_0}}^k) < 0$ for $\varepsilon  > 0$ small by \eqref{3.3}. In view of the gluing method due to V. Coti Zelati and P. Rabinowitz (\cite{cr}, Proposition~3.4), there exists ${{\bar s}_\varepsilon } \in {[0,1]^K}$ such that
\begin{equation}\label{3.63}
J_\varepsilon ^k(\xi _{\varepsilon ,1}^k({{\bar s}_\varepsilon })) \ge c_\varepsilon ^k{\text{ for each }}1 \le k \le K.
\end{equation}
Moreover, by the definition of $J_\varepsilon ^k$, $c_\varepsilon ^k$, $I$ and $c$, we see that for $\varepsilon  > 0$ small and each $1 \le k \le K$,
\begin{equation}\label{3.64}
c_\varepsilon ^k \ge c.
\end{equation}
\eqref{3.61}, \eqref{3.62}, \eqref{3.63} and \eqref{3.64} yield
\[
\mathop {\max }\limits_{s \in {{[0,1]}^k}} {J_\varepsilon }({\xi _\varepsilon }(s)) \ge Kc + o(1),
\]
which contradicts \eqref{3.56} for $\varepsilon  > 0$ small.
\end{proof}

\subsection{Proof of  Theorem \ref{1.1.}}
\begin{proof}
\textbf{Step 1}. By Lemma~\ref{3.7.}, we see that for each $\varepsilon > 0$ small, there exists a sequence $\{ {v_{n,\varepsilon }}\} _{n = 1}^\infty  \subset J_\varepsilon ^{\widetilde{{c_\varepsilon }} + \sum\nolimits_{k = 1}^K {\exp ( - 2{D_k}/\varepsilon )} } \cap X_\varepsilon ^{{d_0}}$ such that ${J'_\varepsilon }({v_{n,\varepsilon }}) \to 0$ in ${(H_0^1({\Omega _\varepsilon }))^{ - 1}}$ as $n \to \infty $. Since $\{ {v_{n,\varepsilon }}\} _{n = 1}^\infty $ is bounded in $H_0^1({\Omega _\varepsilon })$, up to a subsequence, as $n \to \infty $,
\begin{equation}\label{3.65}
{v_{n,\varepsilon }} \rightharpoonup {v_\varepsilon }{\text{ in }}H_0^1({\Omega _\varepsilon }){\text{ and }}{v_{n,\varepsilon }} \to {v_\varepsilon }~{\text{a.e.}}~{\Omega _\varepsilon }.
\end{equation}
Next, we claim that for each $\varepsilon>0$ small,
\begin{equation}\label{3.66}
{v_{n,\varepsilon }} \to {v_\varepsilon }{\text{ in }}{L^{{2^ * }}}({\Omega _\varepsilon }){\text{ as }}n \to \infty .
\end{equation}
Indeed, for each $\varepsilon > 0$ small, choosing $R>0$ such that ${(\widetilde\Lambda )_\varepsilon } \subset {B_R}(0)$ and defining ${\phi _R}(x) \in {C^\infty }({\mathbb{R}^N},[0,1])$ satisfying ${\phi _R}(x)=0$ on ${B_R}(0)$, ${\phi _R}(x)=1$ on ${\mathbb{R}^N}\backslash {B_{2R}}(0)$ and $|\nabla {\phi _R}| \le C/R$. Since for each $\varepsilon > 0$ small, $\{ {\phi _R}{v_{n,\varepsilon }}\} _{n = 1}^\infty $ is bounded in $H_0^1({\Omega _\varepsilon })$, it follows that
\[
\langle {{J'_\varepsilon }({v_{n,\varepsilon }}),{\phi _R}{v_{n,\varepsilon }}} \rangle  \to 0{\text{ as }}n \to \infty .
\]
By the definition of ${g_\varepsilon }(x,t)$, we have
\begin{equation}\label{3.67}
\int_{{\Omega _\varepsilon }} {|\nabla {v_{n,\varepsilon }}{|^2}{\phi _R}}  + \int_{{\Omega _\varepsilon }} {v_{n,\varepsilon }^2{\phi _R}}  \le \frac{1}
{2}\int_{{\Omega _\varepsilon }} {v_{n,\varepsilon }^2{\phi _R}}  - \int_{{\Omega _\varepsilon }} {(\nabla {v_{n,\varepsilon }} \cdot \nabla {\phi _R}){v_{n,\varepsilon }}}.
\end{equation}
By \eqref{3.65}, as $R \to +\infty$,
\begin{equation}\label{3.68}
\begin{gathered}
\quad  \mathop {\overline {\lim } }\limits_{n \to \infty } \int_{{\Omega _\varepsilon }} {|\nabla {v_{n,\varepsilon }}| \cdot |\nabla {\phi _R}| \cdot |{v_{n,\varepsilon }}|}  \hfill \\
   \le \mathop {\overline {\lim } }\limits_{n \to \infty } {\Bigl( {\int_{{\Omega _\varepsilon }} {|\nabla {v_{n,\varepsilon }}{|^2}} } \Bigr)^{1/2}}{\Bigl( {\int_{{\Omega _\varepsilon } \cap ({B_{2R}}(0)\backslash {B_R}(0))} {|{v_{n,\varepsilon }}{|^2}|\nabla {\phi _R}{|^2}} } \Bigr)^{1/2}} \hfill \\
   \le C{\Bigl( {\int_{{\Omega _\varepsilon } \cap ({B_{2R}}(0)\backslash {B_R}(0))} {|{v_\varepsilon }{|^2}|\nabla {\phi _R}{|^2}} } \Bigr)^{1/2}} \hfill \\
   \le C{\Bigl( {\int_{{\Omega _\varepsilon } \cap ({B_{2R}}(0)\backslash {B_R}(0))} {|{v_\varepsilon }{|^{{2^ * }}}} } \Bigr)^{1/{2^ * }}}{\Bigl( {\int_{{B_{2R}}(0)\backslash {B_R}(0)} {|\nabla {\phi _R}{|^N}} } \Bigr)^{1/N}} \to 0. \hfill \\
\end{gathered}
\end{equation}
By \eqref{3.67} and \eqref{3.68}, we see that for $\varepsilon  > 0$ small,
\begin{equation}\label{3.69}
\mathop {\overline {\lim } }\limits_{R \to  + \infty } \mathop {\overline {\lim } }\limits_{n \to \infty } \int_{{\Omega _\varepsilon }\backslash {B_{2R}}(0)} {|\nabla {v_{n,\varepsilon }}{|^2} + |{v_{n,\varepsilon }}{|^2}}  = 0.
\end{equation}
In order to prove \eqref{3.66}, it suffices to show that for each $1 \le k \le K$ and $\varepsilon  > 0$ small,
\begin{equation}\label{3.70}
{v_{n,\varepsilon }} \to {v_\varepsilon }{\text{ in }}{L^{{2^ * }}}\Bigl( {{\Omega _\varepsilon }\backslash \Bigl( {\mathop  \cup \limits_{l \ne k} (\widetilde{{\Lambda ^l}})_\varepsilon ^{{\beta_0} /\varepsilon }} \Bigr)} \Bigr) \text{ as } n \to \infty,
\end{equation}
where ${\beta_0}$ has been mentioned in \eqref{add2}. Choosing ${\eta _{\varepsilon,k}}(x) \in {C^\infty }({\mathbb{R}^N},[0,1])$ satisfying ${\eta _{\varepsilon,k}}(x) = 1$ for $x \in {\bigl( {{\Omega _\varepsilon }\backslash \bigl( {\mathop  \cup \nolimits_{l \ne k} (\widetilde{{\Lambda ^l}})_\varepsilon ^{{\beta_0}/\varepsilon }} \bigr)} \bigr)^{{\beta_0} /2\varepsilon }}$, ${\text{supp}}{\eta _{\varepsilon,k}}(x) \subset {\bigl( {{\Omega _\varepsilon }\backslash \bigl( {\mathop  \cup \nolimits_{l \ne k} (\widetilde{{\Lambda ^l}})_\varepsilon ^{{\beta_0} /\varepsilon }} \bigr)} \bigr)^{{\beta_0} /\varepsilon }}$ and $|\nabla {\eta _{\varepsilon,k}}| \le C\varepsilon$. By \eqref{3.69}, we see that the sequence $\{ {\eta _{\varepsilon,k}}{v_{n,\varepsilon }}\} _{n = 1}^\infty $ is tight in $H_0^1({\Omega _\varepsilon })$. From \eqref{3.65}, we may assume that
\[
|\nabla ({\eta _{\varepsilon,k}}{v_{n,\varepsilon }}){|^2} \rightharpoonup |\nabla ({\eta _{\varepsilon,k}}{v_\varepsilon }){|^2} + \mu {\text{ and }}|{\eta _{\varepsilon,k}}{v_{n,\varepsilon }}{|^{{2^*}}} \rightharpoonup |{\eta _{\varepsilon,k}}{v_\varepsilon }{|^{{2^*}}} + \nu ,
\]
where $\mu$ and $\nu$ are two bounded nonnegative measures on ${\mathbb{R}^N}$.  By the concentration compactness principle II (Lemma I.1 of \cite{l}), we obtain an at most countable index set $\Gamma $, sequences ${\{ {x_j}\} _{j \in \Gamma }} \subset {\mathbb{R}^N}$ and ${\{ {\mu _j}\} _{j \in \Gamma }},{\{ {\nu _j}\} _{j \in \Gamma }} \subset (0,\infty )$ such that
\begin{equation}\label{3.71}
\mu  \ge \sum\limits_{j \in \Gamma } {{\mu _j}} {\delta _{{x_j}}},\nu  = \sum\limits_{j \in \Gamma } {{\nu _j}} {\delta _{{x_j}}}{\text{ and }}S{({\nu _j})^{(N - 2)/N}} \le {\mu _j}.
\end{equation}
To prove \eqref{3.70}, we need to show that ${\{ {x_j}\} _{j \in \Gamma }} \cap \overline {\bigl( {{\Omega _\varepsilon }\backslash \bigl( {\mathop  \cup \nolimits_{l \ne k} (\widetilde{{\Lambda ^l}})_\varepsilon ^{{\beta_0} /\varepsilon }} \bigr)} \bigr)}  = \emptyset$. Suppose by contradiction that there is an $j_0 \in \Gamma$ such that  ${x_{{j_0}}} \in \overline {\bigl( {{\Omega _\varepsilon }\backslash \bigl( {\mathop  \cup \nolimits_{l \ne k} (\widetilde{{\Lambda ^l}})_\varepsilon ^{{\beta_0} /\varepsilon }} \bigr)} \bigr)}$, arguing as \eqref{3.22}-\eqref{3.25} in Proposition~\ref{3.4.}, we get ${\nu _{{j_0}}} \ge {S^{N/2}}$. Since $\{ {v_{n,\varepsilon }}\} _{n = 1}^\infty  \subset X_\varepsilon ^{{d_0}}$, by the definition of $X_\varepsilon ^{{d_0}}$ and the compactness of $S$ and $\overline {{\Lambda ^k}} $, we see that for each $1 \le k \le K$, there exist ${U^k} \in S$, ${x^k} \in \overline {{\Lambda ^k}} $ such that for $n$ large,
\begin{equation}\label{3.72}
{\left\| {{v_{n,\varepsilon }} - \sum\limits_{k = 1}^K {{\varphi _{\varepsilon ,k}}(x){U^k}(x - ({x^k}/\varepsilon ))} } \right\|_{H_0^1({\Omega _\varepsilon })}} \le 2{d_0}.
\end{equation}
Similar to the argument in  \eqref{3.58}-\eqref{3.61}, we have
\begin{equation}\label{3.73}
\widetilde{{c_\varepsilon }} + \sum\limits_{k = 1}^K {\exp \Bigl( { - \frac{{2{D_k}}}
{\varepsilon }} \Bigr)}  \ge {J_\varepsilon }({v_{n,\varepsilon }}) \ge {J_\varepsilon }({\psi _{\varepsilon ,k}}{v_{n,\varepsilon }}) + {J_\varepsilon }((1 - {\psi _{\varepsilon ,k}}){v_{n,\varepsilon }}) + o(1),
\end{equation}
where $o(1) \to 0$ as $n \to \infty$. From \eqref{3.71} and \eqref{3.72}, we get
\begin{equation}\label{3.74}
\begin{gathered}
\quad  {J_\varepsilon }({\psi _{\varepsilon ,k}}{v_{n,\varepsilon }}) \hfill \\
   = {J_\varepsilon }({\psi _{\varepsilon ,k}}{v_{n,\varepsilon }}) - \frac{1}
{N}P({U^k}) \hfill \\
   \ge \frac{1}
{N}\int_{{\Omega _\varepsilon }} {|\nabla ({\psi _{\varepsilon ,k}}{v_{n,\varepsilon }}){|^2}}  + \frac{{N - 2}}
{{2N}}\Bigl( {\int_{{\Omega _\varepsilon }} {|\nabla ({\psi _{\varepsilon ,k}}{v_{n,\varepsilon }}){|^2}}  - \int_{{\mathbb{R}^N}} {|\nabla {U^k}{|^2}} } \Bigr) \hfill \\
\quad   + \frac{1}
{2}\Bigl( {\int_{{\Omega _\varepsilon }} {|{\psi _{\varepsilon ,k}}{v_{n,\varepsilon }}{|^2}}  - \int_{{\mathbb{R}^N}} {|{U^k}{|^2}} } \Bigr) - \Bigl( {\int_{{\Omega _\varepsilon }} {F({\psi _{\varepsilon ,k}}{v_{n,\varepsilon }})}  - \int_{{\mathbb{R}^N}} {F({U^k})} } \Bigr) \hfill \\
   \ge \frac{1}
{N}{\mu _{{j_0}}} - C{d_0} + o(1) \ge \frac{1}
{N}S{({\nu _{{j_0}}})^{(N - 2)/N}} - C{d_0} + o(1) \ge \frac{1}
{N}{S^{N/2}} - C{d_0} + o(1). \hfill \\
\end{gathered}
\end{equation}
Moreover, for each $l \ne k$, we define $w_{n,\varepsilon }^l(x) = ((1 - {\psi _{\varepsilon ,k}}(x)){v_{n,\varepsilon }}(x))$ for $x \in (\widetilde{{\Lambda ^l}})_\varepsilon ^{{\beta_0} /\varepsilon }$ and $w_{n,\varepsilon }^l(x) = 0$ for $x \notin (\widetilde{{\Lambda ^l}})_\varepsilon ^{{\beta_0}/\varepsilon }$, by \eqref{3.72}, we have
\begin{equation}\label{3.75}
{J_\varepsilon }((1 - {\psi _{\varepsilon ,k}}){v_{n,\varepsilon }}) = \sum\limits_{l \ne k} {J_\varepsilon ^l(w_{n,\varepsilon }^l)}  \ge \sum\limits_{l \ne k} {I({U^l})}  - C{d_0} + o(1) = (K - 1)c - C{d_0} + o(1).
\end{equation}
In view of \eqref{3.73}, \eqref{3.74}, \eqref{3.75} and Lemma~\ref{3.1.}(i), letting $\varepsilon \to 0$ and ${d_0} \to 0$, we see that $c \ge \frac{1}
{N}{S^{N/2}}$, a contradiction. Hence, \eqref{3.70} holds. By \eqref{3.70}, we get \eqref{3.66}. Using standard argument, we can verify that ${v_{n,\varepsilon }} \to {v_\varepsilon } \in J_\varepsilon ^{\widetilde{{c_\varepsilon }} + \sum\nolimits_{k = 1}^K {\exp ( - 2{D_k}/\varepsilon )} } \cap X_\varepsilon ^{{d_0}}$ in $H_0^1({\Omega _\varepsilon })$ and ${v_\varepsilon }$ satisfies
\begin{equation}\label{3.76}
 - \Delta {v_\varepsilon } + {v_\varepsilon } + 2 \cdot {2^ * }{\Bigl( {\int_{{\Omega _\varepsilon }} {{\chi _\varepsilon }|{v_\varepsilon }{|^{{2^ * }}}}  - 1} \Bigr)_ + }{\chi _\varepsilon }|{v_\varepsilon }{|^{{2^ * } - 2}}{v_\varepsilon } = {g_\varepsilon }(x,{v_\varepsilon }){\text{ in }}{\Omega _\varepsilon }.
\end{equation}
Since $c>0$, we see that $0 \notin X_\varepsilon ^{{d_0}}$ for ${d_0} > 0$ small. Thus ${v_\varepsilon } \ne 0$.

\noindent\textbf{Step 2}. For any sequence $\{ {\varepsilon _j}\} _{j = 1}^\infty $ with ${\varepsilon _j} \to 0$, by Proposition~\ref{3.4.}, there exist, up to a subsequence, $\{ y_{{\varepsilon _j}}^k\} _{j = 1}^\infty  \subset {({\Lambda ^k})_{{\varepsilon _j}}}$, ${U^k} \in S (1 \le k \le K)$ such that
\begin{equation}\label{3.77}
\mathop {\lim }\limits_{j \to \infty } {\left\| {{v_{{\varepsilon _j}}} - \sum\limits_{k = 1}^K {{\varphi _{{\varepsilon _j},k}}(x){U^k}(x - y_{{\varepsilon _j}}^k)} } \right\|_{H_0^1({\Omega _{{\varepsilon _j}}})}} = 0.
\end{equation}
In the following, we adopt an iteration method in \cite{Barrios15} to prove
\begin{equation}\label{3.78}
{\| {{v_{{\varepsilon _j}}}} \|_{{L^\infty }({\Omega _{{\varepsilon _j}}})}} \le C{\text{ uniformly for }}{\varepsilon _j}.
\end{equation}
For the sake of simplicity, we denote $v_{{\varepsilon _j}}$, ${\Omega _{{\varepsilon _j}}}$ by $v_j$ and $\Omega_j$ respectively. For any $T, \kappa>1$, set
$$
\psi(t)=\psi_{T,\kappa}(t)=\left\{
  \begin{array}{ll}
   0,\quad&t\leq0 \\
   t^{\kappa} \quad&0<t<T,\\
   \kappa T^{\kappa-1}(t-T)+T^{\kappa} \quad&t\geq T,
 \end{array}
\right.
$$
then $\psi$ is convex, differentiable and $-\Delta\psi(v_j)\leq\psi'(v_j)(-\Delta v_j),\ x\in\Omega_j$ in the weak sense for any $j$. Obviously, $\psi(v_j)\in H_0^1(\Omega_j)$ and $\|\nabla\psi(v_j)\|_2\leq \kappa T^{\kappa-1}\|\nabla v_j\|_2$. Then by Sobolev's embedding, $\|\nabla\psi(v_j)\|_2^2\geq S\|\psi(v_j)\|_{2^*}^2$, where $S$ is the best Sobolev constant for the imbedding ${D^{1,2}}({\mathbb{R}^N}) \hookrightarrow {L^{{2^ * }}}({\mathbb{R}^N})$. Meanwhile, noting that $v_j$ is positive and solves (\ref{3.76}), it follows from ($F_1$) and ($F_2$) that for some $c>0$ (independent of $j$) such that
$
-\Delta \psi(v_j)\leq c \psi'(v_j)v_j^{2^*-1}, x\in\Omega_j
$
in the weak sense. Then,
$
\int_{\Omega_j}|\nabla\psi(v_j)|^2\leq c\int_{\Omega_j}\psi(v_j)\psi'(v_j)v_j^{2^*-1}.
$
which implies that
$
\|\psi(v_j)\|_{2_s^*}^2\leq\frac{c}{S}\int_{\Omega_j}\psi(v_j)\psi'(v_j)v_j^{2^*-1}.
$
Since $t\psi'(t)\leq\kappa\psi(t)$, $t\ge0$,
\begin{equation}\label{guji1}
\|\psi(v_j)\|_{2_s^*}^2\leq \frac{c\kappa}{S}\int_{\Omega_j}\psi^2(v_j)v_j^{2^*-2}.
\end{equation}
Let $\kappa=:\kappa_1=2^*/2$, we claim that $v_j\in L^{\kappa_12^*}(\Omega_j)$.
Indeed, for any $R>0$,
\begin{equation*}
\int_{\Omega_j}\psi^2(v_j)v_j^{2^*-2}\leq\int_{\{v_j\leq R\}}\psi^2(v_j)R^{2^*-2}+\|\psi(v_j)\|_{2^*}^2\left(\int_{\{v_j\geq R\}}v_j^{2^*}\right)^{\frac{2^*-2}{2^*}}.
\end{equation*}
For any $j$, there exists $R>0$ such that
$$
\left(\int_{\{v_j\geq R\}}v_j^{2^*}\right)^{\frac{2^*-2}{2^*}}\le\frac{S}{2c\kappa_1},
$$
which gives that
$
\|\psi(v_j)\|_{2^*}^2\leq 2R^{2^*-2}c\kappa_1/S\int_{\{v_j\leq R\}}\psi^2(v_j).
$
Let $T\rightarrow\infty$, we have $v_j\in L^{\kappa_1 2^*}(\Omega_j)$ and
$
\|v_j\|_{\kappa_12^*}^{2\kappa_1}\leq 2R^{2^*-2}c\kappa_1/S\|v_j\|_{2^*}^{2^*}<\infty.
$
Then it follows from \eqref{guji1} that letting $T\rightarrow\infty$, we have
$
\|v_j\|_{\kappa_1 2^*}^{2\kappa_1}\leq c\kappa_1/S\int_{\Omega_j}v_j^{2\kappa_1+2^*-2}.
$
So, let $C_1=c\kappa_1/S$,
$$
\left(\int_{\Omega_j}v_j^{\kappa_1 2^*}\right)^{\frac{1}{(\kappa_1-1) 2^*}}\leq C_1^{\frac{1}{2(\kappa_1-1)}}\left(\int_{\Omega_j}v_j^{2\kappa_1+2^*-2}\right)^{\frac{1}{2(\kappa_1-1)}}.
$$
For any $m\ge2$, set $\kappa_{i}=(2^*\kappa_{i-1}-2^*+2)/2, i=2,3,\cdots,m$, i. e., $\kappa_i=\left(\frac{2^*}{2}\right)^{i-1}(\kappa_1-1)+1$. Similarly as above, for $i=2,3,\cdots,m$,
$$
\left(\int_{\Omega_j}v_j^{\kappa_{i} 2^*}\right)^{\frac{1}{(\kappa_{i}-1) 2^*}}\leq C_i^{\frac{1}{2(\kappa_{i}-1)}}\left(\int_{\Omega_j}v_j^{2^*\kappa_{i-1}}\right)^{\frac{1}{2^*(\kappa_{i-1}-1)}},
$$
where $C_i=c\kappa_i/S$. Then, by iteration $v_j\in L^{t}(\Omega_j)$ for any $t\ge2$. Let $D_i:=\left(\int_{\Omega_j}v_j^{2^*\kappa_{i}}\right)^{\frac{1}{2^*(\kappa_i-1)}}, i=1,2,\cdots,m$, we have
$$
D_{m}\leq\prod\limits_{i=1}^{m-1}C_i^{\frac{1}{2(\kappa_{i}-1)}}\cdot D_1.
$$
Let $m\rightarrow\infty$, $\|v_j\|_{L^\infty(\Omega_j)}\leq \prod\limits_{i=1}^{\infty}C_i^{\frac{1}{2(\kappa_{i}-1)}}\cdot D_1<\infty$. Due to $v_j\in X_{\varepsilon_j}^{d_0}$, the claim is concluded.

It follows from \cite[Theorem~8.17]{gt} that, for any $y\in\Omega_{\varepsilon_j}$ with ${B_2}(y)\subset\Omega_{\varepsilon_j}$,
\begin{equation}\label{3.95}
\begin{array}{ll}
  \mathop {\sup }\limits_{{B_1}(y)} {v_{{\varepsilon _j}}}\le C \bigl( {{{\| {{v_{{\varepsilon _j}}}} \|}_{{L^2}({B_2}(y))}} + \| {{v_{{\varepsilon _j}}}} \|_{{L^2}({B_2}(y))}^{2/N}} \bigr). \hfill \\
\end{array}
\end{equation}
Moreover, thanks to \eqref{3.77}, for each $R>0$,
\begin{equation}\label{3.96}
{\| {{v_{{\varepsilon _j}}}} \|_{{L^2}\bigl({\Omega _{{\varepsilon _j}}}\backslash \mathop  \cup \limits_{k = 1}^K {B_R}(y_{{\varepsilon _j}}^k)\bigr)}} \le \sum\limits_{k = 1}^K {{{\| {{U^k}} \|}_{{L^2}({\mathbb{R}^N}\backslash {B_R}(0))}}}  + o(1),
\end{equation}
where $o(1) \to 0$ as $j \to \infty$. We obtain from \eqref{3.95} and \eqref{3.96} that for any $\delta  > 0$, there exists $R = R_\delta  > 0$ such that
\begin{equation}\label{3.97}
|{v_{{\varepsilon _j}}}(x)| < \delta {\text{ uniformly for }}x \in {\Omega _{{\varepsilon _j}}}\backslash \mathop  \cup \limits_{k = 1}^K {B_{R_\delta}}(y_{{\varepsilon _j}}^k){\text{ and }}{\varepsilon _j} > 0{\text{ small}}.
\end{equation}
By \eqref{3.97}, we see from maximum principle and Schauder's estimate that for any $\mu  \in (0,1)$,
\begin{equation}\label{3.98}
{v_{{\varepsilon _j}}}(x) + |\nabla {v_{{\varepsilon _j}}}(x)| \le {C_\mu }\sum\limits_{k = 1}^K {{e^{ - \mu |x - y_{{\varepsilon _j}}^k|}}} ,
\end{equation}
where ${C_\mu}>0$ is independent of ${{\varepsilon _j}}$. Fixing $\beta ' > 0$ satisfying
$$
\beta ' < \mathop {\min }\limits_{1 \le k \le K} \mathop {\inf }\limits_{x \in {\Lambda ^k}} d(x,\partial \widetilde{{\Lambda ^k}}),
$$
then as $j \to \infty$,
\[
\varepsilon _j^{ - 1}\int_{{\Omega _{{\varepsilon _j}}}\backslash {{(\widetilde\Lambda )}_{{\varepsilon _j}}}} {v_{{\varepsilon _j}}^{{2^ * }}}  \le {C_\mu}\varepsilon _j^{ - 1}\sum\limits_{k = 1}^K {\int_{{\mathbb{R}^N}\backslash ({{(\widetilde\Lambda )}_{{\varepsilon _j}}} - y_{{\varepsilon _j}}^k)} {{e^{ - {2^ * }{\mu}|x|}}} }  \le {C_\mu}\varepsilon _j^{ - 1}\sum\limits_{k = 1}^K {\int_{{\mathbb{R}^N}\backslash {B_{\beta '/{\varepsilon _j}}}(0)} {{e^{ - {2^ * }{\mu}|x|}}} }  \to 0,
\]
i.e. ${Q_{{\varepsilon _j}}}({v_{{\varepsilon _j}}}) = 0$ for ${{\varepsilon _j}}$ small. Therefore, ${v_{{\varepsilon _j}}}$ satisfies
\begin{equation}\label{3.99}
 - \Delta {v_{{\varepsilon _j}}} + {v_{{\varepsilon _j}}} = {g_{{\varepsilon _j}}}(x,{v_{{\varepsilon _j}}}){\text{ in }}{\Omega _{{\varepsilon _j}}}.
\end{equation}
\textbf{Step 3}. Denoting ${d_{{\varepsilon _j},k}} := d({\varepsilon _j}y_{{\varepsilon _j}}^k,\partial \Omega )$, we show that
\begin{equation}\label{3.100}
{d_{{\varepsilon _j},k}} \to {D_k}{\text{ as }}j \to \infty,
\end{equation}
which plays a crucial role in proving Theorem~\ref{1.1.}. The proof is similar to \cite{b1}. However, to construct multi-peak solutions which concentrate around the prescribed finite sets of local maxima of the distance function $d( \cdot ,\partial \Omega )$, the problem becomes more complicated than that in \cite{b1}, where only the global concentrating single-peak solutions were considered.

In the following, we extend $u \in H_0^1({\Omega _{\varepsilon _j} })$ to $u \in {H^1}({\mathbb{R}^N})$ by zero outside ${\Omega _{\varepsilon _j} }$ if necessary. For each $1 \le k \le K$ and ${\beta _{1,k}} > {\beta _{2,k}} > 0$ small but fixed to ensure that $({B_{{d_{{\varepsilon _j},k + {\beta _{1,k}}}}}}({\varepsilon _j}y_{{\varepsilon _j}}^k) \cap \Omega ) \subset \widetilde{{\Lambda ^k}}$ for ${\varepsilon _j} >0$ small. Moreover, we define a smooth cut-off function ${\psi _{{\varepsilon _j},k}}(x) \in C_c ^\infty ( {{B_{({d_{{\varepsilon _j},k}} + {\beta _{1,k}})/{\varepsilon _j}}}(y_{{\varepsilon _j}}^k),[0,1]} )$ satisfying ${\psi _{{\varepsilon _j},k}}(x) = 1$ on ${B_{({d_{{\varepsilon _j},k}} + {\beta _{2,k}})/{\varepsilon _j}}}(y_{{\varepsilon _j}}^k)$, $|\nabla {\psi _{{\varepsilon _j},k}}| \le C{\varepsilon _j}$ and $|{\nabla ^2}{\psi _{{\varepsilon _j},k}}| \le C\varepsilon _j^2$. From \eqref{3.98}, we see that for any $\delta  \in (0,1)$, there exists some $C_\delta>0$ such that
\begin{equation}\label{3.101}
\left| {{E_{{\varepsilon _j}}}({v_{{\varepsilon _j}}}) - \sum\limits_{k = 1}^K {E_{{\varepsilon _j}}^k({\psi _{{\varepsilon _j},k}}{v_{{\varepsilon _j}}})} } \right| \le {C_\delta }\sum\limits_{k = 1}^K {\exp \Bigl( { - \frac{{2\delta }}
{{{\varepsilon _j}}}({d_{{\varepsilon _j},k}} + {\beta _{2,k}})} \Bigr)} .
\end{equation}
Indeed, choosing $\delta ' = \delta '(\delta ) > 0$ such that $\delta  + \delta ' \in (0,1)$, in view of \eqref{3.98}, we see that $\exists {C_\delta } > 0$ such that
\begin{equation}\label{3.102}
{v_{{\varepsilon _j}}}(x) + |\nabla {v_{{\varepsilon _j}}}(x)| \le {C_\delta }\sum\limits_{k = 1}^K {{e^{ - (\delta  + \delta ')|x - y_{{\varepsilon _j}}^k|}}} ,
\end{equation}
then \eqref{3.78}, \eqref{3.102}, $(F_1)$ and $(F_2)$ yield
\[
\begin{array}{ll}
\left| {{E_{{\varepsilon _j}}}({v_{{\varepsilon _j}}}) - \sum\limits_{k = 1}^K {E_{{\varepsilon _j}}^k({\psi _{{\varepsilon _j},k}}{v_{{\varepsilon _j}}})} } \right| &\le \dis C\int_{{\Omega _{{\varepsilon _j}}}\backslash \mathop  \cup \limits_{k = 1}^K {B_{({d_{{\varepsilon _j},k}} + {\beta _{2,k}})/{\varepsilon _j}}}(y_{{\varepsilon _j}}^k)} {|{v_{{\varepsilon _j}}}{|^2} + |\nabla {v_{{\varepsilon _j}}}{|^2}}  \hfill \\
  % \le {C_\delta }\int_{{\mathbb{R}^N}\backslash \mathop  \cup \limits_{k = 1}^K {B_{({d_{{\varepsilon _j},k}} + {\beta _{2,k}})/{\varepsilon _j}}}(y_{{\varepsilon _j}}^k)} {\sum\limits_{k = 1}^K {{e^{ - 2(\delta  + \delta ')|x - y_{{\varepsilon _j}}^k|}}} }  \hfill \\
   &\le \dis{C_\delta }\sum\limits_{k = 1}^K {\int_{{\mathbb{R}^N}\backslash {B_{({d_{{\varepsilon _j},k}} + {\beta _{2,k}})/{\varepsilon _j}}}(y_{{\varepsilon _j}}^k)} {{e^{ - 2(\delta  + \delta ')|x - y_{{\varepsilon _j}}^k|}}} }  \hfill \\
  & \le \dis {C_\delta }\Bigl( {\sum\limits_{k = 1}^K {\exp \Bigl( { - \frac{{2\delta }}
{{{\varepsilon _j}}}({d_{{\varepsilon _j},k}} + {\beta _{2,k}})} \Bigr)} } \Bigr)\int_{{\mathbb{R}^N}} {{e^{ - 2\delta '|x|}}}  \hfill \\
  & \le \dis {C_\delta }\sum\limits_{k = 1}^K {\exp \Bigl( { - \frac{{2\delta }}
{{{\varepsilon _j}}}({d_{{\varepsilon _j},k}} + {\beta _{2,k}})} \Bigr)} . \hfill \\
\end{array}
\]
Letting ${z_{{\varepsilon _j},k}}(x): = {\psi _{{\varepsilon _j},k}}(x + y_{{\varepsilon _j}}^k){v_{{\varepsilon _j}}}(x + y_{{\varepsilon _j}}^k)$, following \cite{b}, we define
\begin{equation}\label{add15}
z_{{\varepsilon _j},k}^t(x): = {z_{{\varepsilon _j},k}}(y(x,t)),~x \in {B_{({d_{{\varepsilon _j},k}} + {\beta _{1,k}})/{\varepsilon _j}}}(0),~t \in (0, + \infty )
\end{equation}
by
\begin{equation}\label{add11}
y(x,t): =\dis \frac{x}
{{t +\dis (1 - t)\frac{{{\varepsilon _j}|x|}}
{{{d_{{\varepsilon _j},k}} + {\beta _{1,k}}}}}}.
\end{equation}
As mentioned in \cite{b}, for each $t \in (0, + \infty )$, $y(x,t)$ is a diffeomorphism from ${B_{({d_{{\varepsilon _j},k}} + {\beta _{1,k}})/{\varepsilon _j}}}(0)$ onto ${B_{({d_{{\varepsilon _j},k}} + {\beta _{1,k}})/{\varepsilon _j}}}(0)$. Moreover, by \eqref{add11},
\begin{equation}\label{add13}
x(y,t): =\dis \frac{y}
{{\bigl( {1 - \frac{1}
{t}} \bigr)\dis\frac{{{\varepsilon _j}|y|}}
{{{d_{{\varepsilon _j},k}} + {\beta _{1,k}}}} +\dis \frac{1}
{t}}}
\end{equation}
for $t \in (0, + \infty )$. Since
\[
 - {z_{{\varepsilon _j},k}}(x - y_{{\varepsilon _j}}^k) + {z_{{\varepsilon _j},k}}(x - y_{{\varepsilon _j}}^k) = {g_{{\varepsilon _j}}}(x,{z_{{\varepsilon _j},k}}(x - y_{{\varepsilon _j}}^k)){\text{ in }}{\Omega _{{\varepsilon _j}}} \cap {B_{({d_{{\varepsilon _j},k}} + {\beta _{2,k}})/{\varepsilon _j}}}(y_{{\varepsilon _j}}^k),
\]
it follows that
\[\begin{gathered}
\quad {\left. {\frac{{dE_{{\varepsilon _j}}^k(z_{{\varepsilon _j},k}^t(x - y_{{\varepsilon _j}}^k))}}
{{dt}}} \right|_{t = 1}} \hfill \\
   = \int_{\partial ({B_{({d_{{\varepsilon _j},k}} + {\beta _{2,k}})/{\varepsilon _j}}}(0) \cap ({\Omega _{{\varepsilon _j}}}-  y_{{\varepsilon _j}}^k ))} {\frac{{\partial {z_{{\varepsilon _j},k}}}}
{{\partial \nu }}(x \cdot \nabla {z_{{\varepsilon _j},k}})\frac{{{\varepsilon _j}|x| - {d_{{\varepsilon _j},k}} - {\beta _{1,k}}}}
{{{d_{{\varepsilon _j},k}} + {\beta _{1,k}}}}} dS(x) \hfill \\
 \quad  + \int_{({B_{({d_{{\varepsilon _j},k}} + {\beta _{1,k}})/{\varepsilon _j}}}(0)\backslash {B_{({d_{{\varepsilon _j},k}} + {\beta _{2,k}})/{\varepsilon _j}}}(0)) \cap ({\Omega _{{\varepsilon _j}}} -  y_{{\varepsilon _j}}^k )} {\nabla {z_{{\varepsilon _j},k}}\nabla \Bigl( {(x \cdot \nabla {z_{{\varepsilon _j},k}})\frac{{{\varepsilon _j}|x| - {d_{{\varepsilon _j},k}} - {\beta _{1,k}}}}
{{{d_{{\varepsilon _j},k}} + {\beta _{1,k}}}}} \Bigr)} dx \hfill \\
\quad   + \int_{({B_{({d_{{\varepsilon _j},k}} + {\beta _{1,k}})/{\varepsilon _j}}}(0)\backslash {B_{({d_{{\varepsilon _j},k}} + {\beta _{2,k}})/{\varepsilon _j}}}(0)) \cap ({\Omega _{{\varepsilon _j}}}-  y_{{\varepsilon _j}}^k )} {({z_{{\varepsilon _j},k}} - {g_{{\varepsilon _j}}}(x + y_{{\varepsilon _j}}^k,{z_{{\varepsilon _j},k}}))}  \hfill \\
~\quad   \cdot \Bigl( {(x \cdot \nabla {z_{{\varepsilon _j},k}})\frac{{{\varepsilon _j}|x| - {d_{{\varepsilon _j},k}} - {\beta _{1,k}}}}
{{{d_{{\varepsilon _j},k}} + {\beta _{1,k}}}}} \Bigr)dx \hfill \\
   = (I) + (II) + (III), \hfill \\
\end{gathered} \]
where $\nu$ and $dS(x)$ are the outward normal and the volume element on $\partial ({B_{({d_{{\varepsilon _j},k}} + {\beta _{2,k}})/{\varepsilon _j}}}(0) \cap ({\Omega _{{\varepsilon _j}}}-  y_{{\varepsilon _j}}^k ))$, respectively. Note that $\partial \Omega  \in {C^2}$ and $\{ {\varepsilon _j}y_{{\varepsilon _j}}^k\} _{j = 1}^\infty $ and $\{ {d_{{\varepsilon _j},k}}\} _{j = 1}^\infty $ are bounded, we see that $S(\partial ({B_{({d_{{\varepsilon _j},k}} + {\beta _{2,k}})/{\varepsilon _j}}}(y_{{\varepsilon _j}}^k) \cap {\Omega _{{\varepsilon _j}}})) \le C/\varepsilon _j^{N - 1}$, where $C$ is independent of ${\varepsilon _j} >0$ small. Together with \eqref{3.102}, we have
\[
\begin{array}{ll}
  |(I)| &\le \dis C\int_{\partial ({B_{({d_{{\varepsilon _j},k}} + {\beta _{2,k}})/{\varepsilon _j}}}(y_{{\varepsilon _j}}^k) \cap {\Omega _{{\varepsilon _j}}})} {|\nabla {z_{{\varepsilon _j},k}}(x - y_{{\varepsilon _j}}^k){|^2}|x - y_{{\varepsilon _j}}^k|} dS(x) \hfill \\
   &\le \dis {C_\delta }\int_{\partial ({B_{({d_{{\varepsilon _j},k}} + {\beta _{2,k}})/{\varepsilon _j}}}(y_{{\varepsilon _j}}^k) \cap {\Omega _{{\varepsilon _j}}})} {{e^{ - 2(\delta  + \delta ')|x - y_{{\varepsilon _j}}^k|}}|x - y_{{\varepsilon _j}}^k|} dS(x) \hfill \\
   &\le \dis {C_\delta }\exp \Bigl( {\frac{{ - 2(\delta  + \delta '){d_{{\varepsilon _j},k}}}}
{{{\varepsilon _j}}}} \Bigr)\frac{{{d_{{\varepsilon _j},k}} + {\beta _{2,k}}}}
{{\varepsilon _j^N}} \le {C_\delta }\exp \Bigl( {\frac{{ - 2\delta }}
{{{\varepsilon _j}}}{d_{{\varepsilon _j},k}}} \Bigr) \hfill \\
\end{array}
\]
for ${\varepsilon _j} >0$ small. In view of ${L^p}$-estimate and \eqref{3.102},
\[
\begin{array}{ll}
  |(II)|& \le \dis \int_{({B_{({d_{{\varepsilon _j},k}} + {\beta _{1,k}})/{\varepsilon _j}}}(0)\backslash {B_{({d_{{\varepsilon _j},k}} + {\beta _{2,k}})/{\varepsilon _j}}}(0))} {|\nabla {z_{{\varepsilon _j},k}}{|^2} + |\nabla {z_{{\varepsilon _j},k}}| \cdot |{\nabla ^2}{z_{{\varepsilon _j},k}}| \cdot |x|} dx \hfill \\
  & \le \dis{C_\delta }\int_{({B_{({d_{{\varepsilon _j},k}} + {\beta _{1,k}})/{\varepsilon _j}}}(0)\backslash {B_{({d_{{\varepsilon _j},k}} + {\beta _{2,k}})/{\varepsilon _j}}}(0))} {{e^{ - 2(\delta  + \delta ')|x|}}|x|} dx \hfill \\
   &\le \dis {C_\delta }\exp \Bigl( {\frac{{ - 2\delta }}
{{{\varepsilon _j}}}({d_{{\varepsilon _j},k}} + {\beta _{2,k}})} \Bigr)\int_{{\mathbb{R}^N}} {{e^{ - 2\delta '|x|}}|x|} dx \le {C_\delta }\exp \Bigl( {\frac{{ - 2\delta }}
{{{\varepsilon _j}}}({d_{{\varepsilon _j},k}} + {\beta _{2,k}})} \Bigr). \hfill \\
\end{array}
\]
Similarly, we get
$$
|(III)| \le {C_\delta }\exp \Bigl( {\frac{{ - 2\delta }}
{{{\varepsilon _j}}}({d_{{\varepsilon _j},k}} + {\beta _{2,k}})} \Bigr).
$$
Hence, we have
\begin{equation}\label{3.103}
{\left. {\frac{{dE_{{\varepsilon _j}}^k(z_{{\varepsilon _j},k}^t(x - y_{{\varepsilon _j}}^k))}}
{{dt}}} \right|_{t = 1}} \le {C_\delta } \exp \Bigl( {\frac{{ - 2\delta }}
{{{\varepsilon _j}}}{d_{{\varepsilon _j},k}}} \Bigr).
\end{equation}
Following \cite[Proposition~3.3]{b} or \cite[Proposition~2.6]{bl}, we let
\[
{T_k}: = {T_k}(t,y,{\varepsilon _j}) = (t - 1)\frac{{{\varepsilon _j}|y|}}
{{{d_{{\varepsilon _j},k}} + {\beta _{1,k}}}} + 1{\text{ and }}{A_k}: = {A_k}(t,y,{\varepsilon _j}) = \Bigl( {\frac{{{T_k} - 1}}
{{{T_k}}}\frac{{{y_m}{y_n}}}
{{|y{|^2}}}} \Bigr)_{m,n = 1}^N.
\]
Direct computations as in \cite[Proposition~3.3]{b} show that
\begin{equation}\label{3.104}
\begin{gathered}
\quad  \int_{{\mathbb{R}^N}} {|\nabla z_{{\varepsilon _j},k}^t{|^2}} dx \hfill \\
   = {t^{N - 2}}\sum\limits_{m = 1}^N {\int_{{\mathbb{R}^N}} {{{\left| {\frac{{\partial {z_{{\varepsilon _j},k}}}}
{{\partial {y_m}}}} \right|}^2}\left| {T_k^2 + \frac{{2({T_k} - 1)y_m^2}}
{{|y{|^2}}} + \frac{{{{({T_k} - 1)}^2}y_m^2}}
{{T_k^2|y{|^2}}}} \right|\frac{{|\det (I - {A_k})|}}
{{T_k^{N + 4}}}dy} } , \hfill \\
\end{gathered}
\end{equation}
\begin{equation}\label{3.105}
\int_{{\mathbb{R}^N}} {|z_{{\varepsilon _j},k}^t{|^2}} dx = {t^N}\int_{{\mathbb{R}^N}} {|{z_{{\varepsilon _j},k}}{|^2}\frac{{|\det (I - {A_k})|}}
{{T_k^N}}dy}
\end{equation}
and
\begin{equation}\label{3.106}
\begin{gathered}
\quad  \int_{{\mathbb{R}^N}} {G_{\varepsilon _j} ^k(x + y_{{\varepsilon _j}}^k,z_{{\varepsilon _j},k}^t)} dx \hfill \\
   = {t^N}\int_{{\mathbb{R}^N}} {{\chi _{{{({\Lambda ^k})}_{{\varepsilon _j}}}}}\Bigl( {\frac{y}
{{\bigl( {1 - \frac{1}
{t}} \bigr)\frac{{{\varepsilon _j}|y|}}
{{{d_{{\varepsilon _j},k}} + {\beta _{1,k}}}} + \frac{1}
{t}}} + y_{{\varepsilon _j}}^k} \Bigr)F({z_{{\varepsilon _j},k}})\frac{{|\det (I - {A_k})|}}
{{T_k^N}}} dy \hfill \\
 \quad  + {t^N}\int_{{\mathbb{R}^N}} {\Bigl( {1 - {\chi _{{{({\Lambda ^k})}_{{\varepsilon _j}}}}}\Bigl( {\frac{y}
{{\bigl( {1 - \frac{1}
{t}} \bigr)\frac{{{\varepsilon _j}|y|}}
{{{d_{{\varepsilon _j},k}} + {\beta _{1,k}}}} + \frac{1}
{t}}} + y_{{\varepsilon _j}}^k} \Bigr)} \Bigr)\tilde F({z_{{\varepsilon _j},k}})\frac{{|\det (I - {A_k})|}}
{{T_k^N}}} dy. \hfill \\
\end{gathered}
\end{equation}
Note that, for each $l>0$ and $R>0$, $ {T_k}(t,y,{\varepsilon _j}) \to 1$ and $ {A_k}(t,y,{\varepsilon _j}) \to 0$ as $j \to \infty $ uniformly for $|t| \le l$ and $|y| \le R$. Meanwhile, for any $t \in [a,b]$ with $a > 0$, $a \le {T_k}(t,y,{\varepsilon _j}) \le b + 1$ for $|y| \le ({d_{{\varepsilon _j},k}} + {\beta _{1,k}})/{\varepsilon _j}$. Besides, by \eqref{3.77}, we see that ${z_{{\varepsilon _j},k}} \to {U^k} \in S$ in ${H^1}({\mathbb{R}^N})$ as $j \to \infty $. Therefore, from \eqref{3.104} and \eqref{3.105}, we get
\begin{equation}\label{3.107}
\mathop {\lim }\limits_{j \to \infty } \int_{{\mathbb{R}^N}} {|\nabla z_{{\varepsilon _j},k}^t{|^2}}  = {t^{N - 2}}\int_{{\mathbb{R}^N}} {|\nabla {U^k}{|^2}} {\text{ and }}\mathop {\lim }\limits_{j \to \infty } \int_{{\mathbb{R}^N}} {|z_{{\varepsilon _j},k}^t{|^2}}  = {t^N}\int_{{\mathbb{R}^N}} {|{U^k}{|^2}}
\end{equation}
uniformly for $t \in [a,b]$ with $a > 0$. Moreover, from \eqref{3.106}, we have the following two cases.

{\bf Case~I.} $\mathop {\lim }\nolimits_{j \to \infty } d(y_{{\varepsilon _j}}^k,\partial {({\Lambda ^k})_{{\varepsilon _j}}}) =  + \infty $, then
\begin{equation}\label{3.108}
\mathop {\lim }\limits_{j \to \infty } \int_{{\mathbb{R}^N}} {G_{\varepsilon _j} ^k(x + y_{{\varepsilon _j}}^k,z_{{\varepsilon _j},k}^t)}  = {t^N}\int_{{\mathbb{R}^N}} {F({U^k})}
\end{equation}
uniformly for $t \in [a,b]$ with $a > 0$.

{\bf Case~II.} $\mathop {\lim }\nolimits_{j \to \infty } d(y_{{\varepsilon _j}}^k,\partial {({\Lambda ^k})_{{\varepsilon _j}}}) <  + \infty $, then, up to a translation and a rotation, we see that $\exists {x^k} \in \mathbb{R}_ + ^N$ such that
\[
\mathop {\lim }\limits_{j \to \infty } \int_{{\mathbb{R}^N}} {G_{{\varepsilon _j}}^k(x + y_{{\varepsilon _j}}^k,z_{{\varepsilon _j},k}^t)}  = {t^N}\int_{\mathbb{R}_ + ^N} {F({U^k}(x - {x^k}))}  + {t^N}\int_{{\mathbb{R}^N}\backslash \mathbb{R}_ + ^N} {\tilde F({U^k}(x - {x^k}))} .
\]
uniformly for $t \in [a,b]$ with $a > 0$. Arguing as in Proposition~\ref{3.4.}, we see that the situation which is analogous to {\bf Case~B} in Proposition~\ref{3.4.} will occur. That is,
\begin{equation}\label{3.109}
\tilde f({U^k}(x - {x^k})) = f({U^k}(x - {x^k})){\text{ for all }}x \in {\mathbb{R}^N}\backslash \mathbb{R}_ + ^N.
\end{equation}
To sum up, in either {\bf Case~I} or {\bf Case~II}, \eqref{3.108} always holds. It follows from \eqref{3.107}, \eqref{3.108} and the Pohozaev's identity \eqref{add12} that
\begin{equation}\label{add14}
\mathop {\lim }\limits_{j \to \infty } E_{{\varepsilon _j}}^k(z_{{\varepsilon _j},k}^t(x - y_{{\varepsilon _j}}^k)) = \Bigl( {\frac{{{t^{N - 2}}}}
{2} - \frac{{N - 2}}
{{2N}}{t^N}} \Bigr)\int_{{\mathbb{R}^N}} {|\nabla {U^k}{|^2}}
\end{equation}
uniformly for $t \in [a,b]$ with $a > 0$. By \eqref{add13}, we see that
\begin{equation}\label{3.110}
\begin{gathered}
\quad  \frac{d}
{{dt}}\int_{{\mathbb{R}^N}} {G_{{\varepsilon _j}}^k(x + y_{{\varepsilon _j}}^k,z_{{\varepsilon _j},k}^t)} dx \hfill \\
   = {t^{N - 1}}\int_{{\mathbb{R}^N}} {{\chi _{{{({\Lambda ^k})}_{{\varepsilon _j}}}}}\Bigl( {\frac{y}
{{\bigl( {1 - \frac{1}
{t}} \bigr)\frac{{{\varepsilon _j}|y|}}
{{{d_{{\varepsilon _j},k}} + {\beta _{1,k}}}} + \frac{1}
{t}}} + y_{{\varepsilon _j}}^k} \Bigr)f({z_{{\varepsilon _j},k}})}  \hfill \\
~\quad   \cdot (\nabla {z_{{\varepsilon _j},k}} \cdot y)\Bigl( {\frac{{{\varepsilon _j}|y|}}
{{{d_{{\varepsilon _j},k}} + {\beta _{1,k}}}} - 1} \Bigr)\frac{{|\det (I - {A_k})|}}
{{T_k^N}}dy \hfill \\
\quad   + {t^{N - 1}}\int_{{\mathbb{R}^N}} {\Bigl( {1 - {\chi _{{{({\Lambda ^k})}_{{\varepsilon _j}}}}}\Bigl( {\frac{y}
{{\bigl( {1 - \frac{1}
{t}} \bigr)\frac{{{\varepsilon _j}|y|}}
{{{d_{{\varepsilon _j},k}} + {\beta _{1,k}}}} + \frac{1}
{t}}} + y_{{\varepsilon _j}}^k} \Bigr)} \Bigr)\tilde f({z_{{\varepsilon _j},k}})}  \hfill \\
~ \quad  \cdot (\nabla {z_{{\varepsilon _j},k}} \cdot y)\left( {\frac{{{\varepsilon _j}|y|}}
{{{d_{{\varepsilon _j},k}} + {\beta _{1,k}}}} - 1} \right)\frac{{|\det (I - {A_k})|}}
{{T_k^N}}dy. \hfill \\
\end{gathered}
\end{equation}
Differentiating both sides of \eqref{3.110}, using \cite[Proposition~3.3]{b} or \cite[Proposition~2.6]{bl} and combining with the fact that ${z_{{\varepsilon _j},k}} \to {U^k} \in S$ in ${H^1}({\mathbb{R}^N})$ as $j \to \infty $ and the exponential decays of ${z_{{\varepsilon _j},k}}$ and ${U^k}$, we see that
\[\begin{gathered}
\quad  \mathop {\lim }\limits_{j \to \infty } \frac{{{d^2}}}
{{d{t^2}}}\int_{{\mathbb{R}^N}} {G_{{\varepsilon _j}}^k(x + y_{{\varepsilon _j}}^k,z_{{\varepsilon _j},k}^t)} dx \hfill \\
   = \mathop {\lim }\limits_{j \to \infty } {t^{N - 1}}\int_{{\mathbb{R}^N}} {\nabla {\chi _{{{({\Lambda ^k})}_{{\varepsilon _j}}}}}\Bigl( {\frac{y}
{{\bigl( {1 - \frac{1}
{t}} \bigr)\frac{{{\varepsilon _j}|y|}}
{{{d_{{\varepsilon _j},k}} + {\beta _{1,k}}}} + \frac{1}
{t}}} + y_{{\varepsilon _j}}^k} \Bigr) \cdot \frac{y}
{{( - {t^2}){{\left[ {\bigl( {1 - \frac{1}
{t}} \bigr)\frac{{{\varepsilon _j}|y|}}
{{{d_{{\varepsilon _j},k}} + {\beta _{1,k}}}} + \frac{1}
{t}} \right]}^2}}}}  \hfill \\
\quad   \cdot {\left[ {\frac{{{\varepsilon _j}|y|}}
{{{d_{{\varepsilon _j},k}} + {\beta _{1,k}}}} - 1} \right]^2}(f({z_{{\varepsilon _j},k}}) - \tilde f({z_{{\varepsilon _j},k}}))(\nabla {z_{{\varepsilon _j},k}} \cdot y)\frac{{|\det (I - {A_k})|}}
{{T_k^N}}dy \hfill \\
\quad   - (N - 1){t^{N - 2}}\int_{{\Omega ^\infty }} {f({U^k})(\nabla {U^k} \cdot y)} dy - (N - 1){t^{N - 2}}\int_{{\mathbb{R}^N}\backslash {\Omega ^\infty }} {\tilde f({U^k})(\nabla {U^k} \cdot y)} dy, \hfill \\
\end{gathered} \]
where we note that ${\nabla {\chi _{{{({\Lambda ^k})}_{{\varepsilon _j}}}}}}$ is in the distributional sense and ${\Omega ^\infty } \subset {\mathbb{R}^N}$. We have the following two subcases.

{\bf Case~1.} $\mathop {\lim }\nolimits_{j \to \infty } d(y_{{\varepsilon _j}}^k,\partial {({\Lambda ^k})_{{\varepsilon _j}}}) =  + \infty $, then ${\Omega ^\infty } = {\mathbb{R}^N}$ and
\begin{equation}\label{3.111}
\begin{array}{ll}
\dis  \mathop {\lim }\limits_{j \to \infty } \frac{{{d^2}}}
{{d{t^2}}}\int_{{\mathbb{R}^N}} {G_{{\varepsilon _j}}^k(x + y_{{\varepsilon _j}}^k,z_{{\varepsilon _j},k}^t)} dx &=  - (N - 1){t^{N - 2}}\dis\int_{{\mathbb{R}^N}} {f({U^k})(\nabla {U^k} \cdot y)} dy \hfill \\
  & = N(N - 1){t^{N - 2}}\dis\int_{{\mathbb{R}^N}} {F({U^k})}  \hfill \\
\end{array}
\end{equation}
uniformly for $t \in [a,b]$ with $a > 0$.

{\bf Case~2.} $\mathop {\lim }\nolimits_{j \to \infty } d(y_{{\varepsilon _j}}^k,\partial {({\Lambda ^k})_{{\varepsilon _j}}}) <  + \infty $, then, up to a translation and a rotation, ${\Omega ^\infty } = \mathbb{R}_ + ^N - {x^k}$ for some ${x^k} \in \mathbb{R}_ + ^N$,  together with \eqref{3.109}, we get
\begin{equation}\label{3.112}
\begin{gathered}
\quad  \mathop {\lim }\limits_{j \to \infty } \frac{{{d^2}}}
{{d{t^2}}}\int_{{\mathbb{R}^N}} {G_{{\varepsilon _j}}^k(x + y_{{\varepsilon _j}}^k,z_{{\varepsilon _j},k}^t)} dx \hfill \\
   = \frac{1}
{{{t^2}}}\int_{\partial \mathbb{R}_ + ^N - t{x^k}} {(f({U^k}(y/t) - \tilde f({U^k}(y/t))(\nabla {U^k}(y/t) \cdot y)(y \cdot \nu )} dS(y) \hfill \\
\quad   - (N - 1){t^{N - 2}}\int_{\mathbb{R}_ + ^N - {x^k}} {f({U^k})(\nabla {U^k} \cdot y)} dy \hfill \\
\quad   - (N - 1){t^{N - 2}}\int_{{\mathbb{R}^N}\backslash (\mathbb{R}_ + ^N - {x^k})} {\tilde f({U^k})(\nabla {U^k} \cdot y)} dy \hfill \\
   = N(N - 1){t^{N - 2}}\int_{{\mathbb{R}^N}} {F({U^k})}  \hfill \\
\end{gathered}
\end{equation}
uniformly for $t \in [a,b]$ with $a > 0$, where $\nu$ and $dS(y)$ are the outward normal and the volume element on ${\partial \mathbb{R}_ + ^N - t{x^k}}$, respectively. Differentiating both sides of \eqref{3.104}, \eqref{3.105} twice and combining with \eqref{3.111}, \eqref{3.112}, we see from Pohozaev's identity \eqref{add12} that for $\gamma >0$ small but fixed,
\begin{equation}\label{3.113}
\mathop {\lim }\limits_{j \to \infty } \frac{{{d^2}E_{{\varepsilon _j}}^k(z_{{\varepsilon _j},k}^t(x - y_{{\varepsilon _j}}^k))}}
{{d{t^2}}} \le  - \frac{{N - 2}}
{2}\int_{{\mathbb{R}^N}} {|\nabla {U^k}{|^2}}
\end{equation}
uniformly for $t \in [1 - \gamma ,1 + \gamma ]$. It follows from \eqref{3.103}, \eqref{3.113} and Lagrange's mean value theorem that there exists ${t_{{\varepsilon _j},k}} \in (1 - \gamma ,1 + \gamma )$ such that
\[
E_{{\varepsilon _j}}^k\Bigl(z_{{\varepsilon _j},k}^{t_{{\varepsilon _j},k}}(x - y_{{\varepsilon _j}}^k)\Bigr) = \mathop {\max }\limits_{t \in [1 - \gamma ,1 + \gamma ]} E_{{\varepsilon _j}}^k(z_{{\varepsilon _j},k}^t(x - y_{{\varepsilon _j}}^k)){\text{ and }}|{t_{{\varepsilon _j},k}}- 1| \le {C_\delta }\exp \Bigl( {\frac{{ - 2\delta }}
{{{\varepsilon _j}}}{d_{{\varepsilon _j},k}}} \Bigr).
\]
Thus, it follows from Taylor expansion that
\begin{equation}\label{3.114}
\Bigl| {\mathop {\max }\limits_{t \in [1 - \gamma ,1 + \gamma ]} E_{{\varepsilon _j}}^k(z_{{\varepsilon _j},k}^t(x - y_{{\varepsilon _j}}^k)) - E_{{\varepsilon _j}}^k({z_{{\varepsilon _j},k}}(x - y_{{\varepsilon _j}}^k))} \Bigr| \le {C_\delta }\exp \Bigl( {\frac{{ - 4\delta }}
{{{\varepsilon _j}}}{d_{{\varepsilon _j},k}}} \Bigr).
\end{equation}
By $({F_1})$ and \eqref{3.107}, we see that for some small ${t_{0,k}} > 0$,
\[
\frac{{dE_{{\varepsilon _j}}^k\bigl(sz_{{\varepsilon _j},k}^{{t_{0,k}}}(x - y_{{\varepsilon _j}}^k)\bigr)}}
{{ds}} \ge 0{\text{ for }}s \in [0,1]{\text{ and }}j \in \mathbb{N}{\text{ large.}}
\]
Furthermore, in view of \eqref{3.107} and \eqref{3.108}, we see that for some ${t_{1,k}} > 0$, $E_{{\varepsilon _j}}^k(z_{{\varepsilon _j},k}^{{t_{1,k}}}(x - y_{{\varepsilon _j}}^k)) <  - 1$ for $j \in \mathbb{N}$ large. Then, we define a continuous curve $\alpha _{{\varepsilon _j}}^k(t):[0,1] \to {H^1}({\mathbb{R}^N})$ by
\begin{equation}\label{3.115}
{\alpha _{{\varepsilon _j},k}}(t) = \left\{ \begin{array}{ll}
  2tz_{{\varepsilon _j},k}^{{t_{0,k}}}(x - y_{{\varepsilon _j}}^k),&t \in [0,1/2], \hfill \\
   z_{{\varepsilon _j},k}^{(2 - 2t){t_{0,k}} + (2t - 1){t_{1,k}}}(x - y_{{\varepsilon _j}}^k),&t \in [1/2,1]. \hfill \\
\end{array}  \right.
\end{equation}
From \eqref{add14}, we see that for $j \in \mathbb{N}$ large,
\[\begin{gathered}
\quad  \mathop {\max }\limits_{t \in [0,1]} E_{{\varepsilon _j}}^k({\alpha _{{\varepsilon _j},k}}(t)) \hfill \\
   = \mathop {\max }\limits_{t \in [1/2,1]} E_{{\varepsilon _j}}^k({\alpha _{{\varepsilon _j},k}}(t)) = \mathop {\max }\limits_{t \in [{t_{0,k}},{t_{1,k}}]} E_{{\varepsilon _j}}^k(z_{{\varepsilon _j},k}^t(x - y_{{\varepsilon _j}}^k)) = \mathop {\max }\limits_{t \in [1 - \gamma ,1 + \gamma ]} E_{{\varepsilon _j}}^k(z_{{\varepsilon _j},k}^t(x - y_{{\varepsilon _j}}^k)), \hfill \\
\end{gathered} \]
then by \eqref{3.114},
\begin{equation}\label{3.116}
\mathop {\max }\limits_{t \in [0,1]} E_{{\varepsilon _j}}^k({\alpha _{{\varepsilon _j},k}}(t)) \le E_{{\varepsilon _j}}^k({z_{{\varepsilon _j},k}}(x - y_{{\varepsilon _j}}^k)) + {C_\delta }\exp \Bigl( {\frac{{ - 4\delta }}
{{{\varepsilon _j}}}{d_{{\varepsilon _j},k}}} \Bigr).
\end{equation}
Using the mollification method, we choose a sequence of mollifiers $\{ {\eta _{{\rho _{{\varepsilon _j}}}}}\} _{j = 1}^\infty $ with radius ${\rho _{{\varepsilon _j}}} \to 0$ as $j \to \infty $ and consider ${{\tilde z}_{{\varepsilon _j},k}}: = {\eta _{{\rho _{{\varepsilon _j}}}}} * {z_{{\varepsilon _j},k}}$, the mollification of ${z_{{\varepsilon _j},k}}$. Note that the support of ${z_{{\varepsilon _j},k}}$ is contained in
${B_{({d_{{\varepsilon _j},k}} + {\beta _{1,k}})/{\varepsilon _j}}}(0)$, for each ${\varepsilon _j} > 0$, we can choose ${\rho _{{\varepsilon _j}}}$ suitably small to ensure that ${{\tilde z}_{{\varepsilon _j},k}} \in C_c^\infty ({B_{({d_{{\varepsilon _j},k}} + {\beta _{1,k}})/{\varepsilon _j}}}(0))$ and satisfying
\begin{equation}\label{3.117}
{\left\| {{{\tilde z}_{{\varepsilon _j},k}} - {z_{{\varepsilon _j},k}}} \right\|_{{H^1}({\mathbb{R}^N})}} \le \exp \Bigl( {\frac{{ - 4\delta }}
{{{\varepsilon _j}}}{d_{{\varepsilon _j},k}}} \Bigr).
\end{equation}
Moreover, we mention that
\begin{equation}\label{3.118}
|a{|^p} \le |b{|^p} + {C_p}(|a{|^{p - 1}} + |b{|^{p - 1}})|a - b|,~(a,b \in \mathbb{R},~p \ge 1).
\end{equation}
Similar to \eqref{add15}, \eqref{add11} and \eqref{3.115}, we define a continuous curve ${\tilde\alpha} _{{\varepsilon _j}}^k(t):[0,1] \to {H^1}({\mathbb{R}^N})$ by
\begin{equation}\label{add16}
{{\tilde\alpha} _{{\varepsilon _j},k}}(t) = \left\{ \begin{array}{ll}
  2t{\tilde z}_{{\varepsilon _j},k}^{{t_{0,k}}}(x - y_{{\varepsilon _j}}^k),&t \in [0,1/2], \hfill \\
  {\tilde z}_{{\varepsilon _j},k}^{(2 - 2t){t_{0,k}} + (2t - 1){t_{1,k}}}(x - y_{{\varepsilon _j}}^k),&t \in [1/2,1]. \hfill \\
\end{array}  \right.
\end{equation}
Recalling that ${t_0} \le {T_k}(t,y,{\varepsilon _j}) \le {t_1} + 1$ for $t \in [{t_0},{t_1}]$ and $|y| \le ({d_{{\varepsilon _j},k}} + {\beta _{1,k}})/{\varepsilon _j}$, we see from \eqref{3.104}, \eqref{3.105}, \eqref{3.106}, \eqref{3.117} and \eqref{3.118} that for each $t \in [0,1]$,
\begin{equation}\label{3.119}
I({{\tilde \alpha }_{{\varepsilon _j},k}}(t)) \le E_{{\varepsilon _j}}^k({{\tilde \alpha }_{{\varepsilon _j},k}}(t)) \le E_{{\varepsilon _j}}^k({\alpha _{{\varepsilon _j},k}}(t)) + {C_\delta }\exp \Bigl( {\frac{{ - 4\delta }}
{{{\varepsilon _j}}}{d_{{\varepsilon _j},k}}} \Bigr),
\end{equation}
where $C>0$ is independent of $\varepsilon>0$ and $t \in [0,1]$. Letting ${S_{{\varepsilon _j},k}}(t)$ be the Schwartz symmetrization of ${\tilde \alpha _{{\varepsilon _j},k}}(t)(x + y_{{\varepsilon _j}}^k)$ and ${\beta _{3,k}}({\varepsilon _j}) > 0$ be such that
\[
\mathop {\max }\limits_{t \in [0,1]} |{\text{supp}}({{\tilde \alpha }_{{\varepsilon _j},k}}(t))| = {B_{({d_{{\varepsilon _j},k}} + {\beta _{3,k}}({\varepsilon _j}))/{\varepsilon _j}}}(0).
\]
Recalling that for each $t \in (0, + \infty )$, $y(x,t)$ is a diffeomorphism from ${B_{({d_{{\varepsilon _j},k}} + {\beta _{1,k}})/{\varepsilon _j}}}(0)$ onto ${B_{({d_{{\varepsilon _j},k}} + {\beta _{1,k}})/{\varepsilon _j}}}(0)$, it follows that $\{ {\beta _{3,k}}({\varepsilon _j})\} _{j = 1}^\infty $ is bounded away from $0$ and ${\beta _{1,k}}$, we can adjust ${\beta _{2,k}}>0$ to ensure that $0 < \mathop {\inf }\nolimits_{j \in \mathbb{N}} {\beta _{3,k}}({\varepsilon _j}) \le \mathop {\sup }\nolimits_{j \in \mathbb{N}} {\beta _{3,k}}({\varepsilon _j}) < {\beta _{2,k}} < {\beta _{1,k}}$. In view of the classical P\'{o}lya-Szeg\"{o} principle \cite{ps1}, we see that
\begin{equation}\label{3.120}
{S_{{\varepsilon _j},k}}(t) \in H_0^1({B_{({d_{{\varepsilon _j},k}} + {\beta _{3,k}}({\varepsilon _j}))/{\varepsilon _j}}}(0)),~{S_{{\varepsilon _j},k}}(0) = 0{\text{ and }}I({S_{{\varepsilon _j},k}}(1)) < 0.
\end{equation}
Inspired by \cite{b1}, we define
\[
{S^{{\delta _0}}}: = \{ U \in {H^1}({\mathbb{R}^N}):d(U,S) \le {\delta _0}\}
\]
and
\[
{S^{*,{\delta _0}}}: = \{ |U{|^*} \in H_{{\text{rad}}}^1({\mathbb{R}^N}):U \in {S^{{\delta _0}}}\}.
\]
By Proposition~\ref{2.1.} $(ii)$, it follows that for each $\delta_0 > 0$, ${S^{ * ,\delta_0}}$ is bounded. Then for each $R > 0$, we define
\[
S_R^{\delta_0 }: = \{ u \in H_{{\text{rad}}}^1({B_R}(0)):d(u,{S^{ * ,\delta_0 }}) \le \delta_0 \} ,
\]
where
$$
H_{{\text{rad}}}^1({B_R}(0)): = \{ u \in H_{\text{0}}^1({B_R}(0)):u(x) = u(|x|)\}.
$$
Note that for each $t \in [0,1]$, ${{\tilde \alpha }_{{\varepsilon _j},k}}(t) \in {C^\infty }$, the function ${{\tilde \alpha }_{{\varepsilon _j},k}}(t)$ is co-area regular (see \cite[Definition~1.2.6]{al}), then by \cite[Theorem~1.4]{al}, it follows that the path ${S_{{\varepsilon _j},k}}(t) \in C([0,1],H_{{\text{rad}}}^1({B_{({d_{{\varepsilon _j},k}} + {\beta _{3,k}}({\varepsilon _j}))/{\varepsilon _j}}}(0)))$. Next, we claim that, for ${\delta _0} > 0$ small but fixed, there exists a $\mu > 0$ such that for ${\varepsilon _j} > 0$ small,
\begin{equation}\label{3.121}
I({S_{{\varepsilon _j},k}}(t)) \le c - \mu {\text{ if }}{S_{{\varepsilon _j},k}}(t) \notin S_{({d_{{\varepsilon _j},k}} + {\beta _{3,k}}({\varepsilon _j}))/{\varepsilon _j}}^{{\delta _0}}.
\end{equation}
In view of \eqref{add14}, \eqref{3.115}, \eqref{3.117} and \eqref{add16}, it suffices to prove that there exists $\nu > 0$ such that for ${\varepsilon _j} > 0$ small,
\[
\Bigl| {t - \frac{{1 - 2{t_{0,k}} + {t_{1,k}}}}
{{2{t_{1,k}} - 2{t_{0,k}}}}} \Bigr| \ge \nu {\text{ if }}{S_{{\varepsilon _j},k}}(t) \notin S_{({d_{{\varepsilon _j},k}} + {\beta _{3,k}}({\varepsilon _j}))/{\varepsilon _j}}^{{\delta _0}}.
\]
Otherwise, there exists a sequence $\{ {t_{{\varepsilon _j}}}\} _{j = 1}^\infty $ such that ${S_{{\varepsilon _j},k}}({t_{{\varepsilon _j}}}) \notin S_{({d_{{\varepsilon _j},k}} + {\beta _{3,k}}({\varepsilon _j}))/{\varepsilon _j}}^{{\delta _0}}$ and ${t_{{\varepsilon _j}}} \to (1 - 2{t_{0,k}} + {t_{1,k}})/(2{t_{1,k}} - 2{t_{0,k}})$ as $j \to \infty $. Moreover, we see from \eqref{3.104}, \eqref{3.105} and the fact
${{\tilde z}_{{\varepsilon _j},k}} \to {U^k} \in S$ in ${H^1}({\mathbb{R}^N})$ that there exists a $\beta  > 0$ small but fixed, $\tilde z_{{\varepsilon _j},k}^t \to {U^k}(y(x,t)) \in {S^{{\delta _0}/2}}$ uniformly for $t \in [1 - \beta ,1 + \beta ]$, then for ${\varepsilon _j} > 0$ small, we see that $\tilde z_{{\varepsilon _j},k}^t \in {S^{{\delta _0}}}$ for $t \in [1 - \beta ,1 + \beta ]$. Thus we conclude that for ${\varepsilon _j} > 0$ small, ${S_{{\varepsilon _j},k}}(t) \in S_{({d_{{\varepsilon _j},k}} + {\beta _{3,k}}({\varepsilon _j}))/{\varepsilon _j}}^{{\delta _0}}$ if $t$ is close to $(1 - 2{t_{0,k}} + {t_{1,k}})/(2{t_{1,k}} - 2{t_{0,k}})$, which is a contradiction. The claim \eqref{3.121} holds. Since ${S_{{\varepsilon _j},k}}(t)$ is continuous, there exists ${t'_{{\varepsilon _j},k}} \in (0,1)$ such that
\[
I({S_{{\varepsilon _j},k}}({t'_{{\varepsilon _j},k}})) = \mathop {\max }\limits_{t \in [0,1]} I({S_{{\varepsilon _j},k}}(t)): = {c_{({d_{{\varepsilon _j},k}} + {\beta _{3,k}}({\varepsilon _j}))/{\varepsilon _j}}}.
\]
By \eqref{3.120} and the definition of the mountain pass level $c$, we see that $c \le {c_{({d_{{\varepsilon _j},k}} + {\beta _{3,k}}({\varepsilon _j}))/{\varepsilon _j}}}$. On the other hand, by \eqref{3.108}, \eqref{3.116} and \eqref{3.119}, we have
\[
{c_{({d_{{\varepsilon _j},k}} + {\beta _{3,k}}({\varepsilon _j}))/{\varepsilon _j}}} \le c + {C_\delta }\exp \Bigl( {\frac{{ - 4\delta }}
{{{\varepsilon _j}}}{d_{{\varepsilon _j},k}}} \Bigr),
\]
then it follows that
\begin{equation}\label{3.122}
{c_{({d_{{\varepsilon _j},k}} + {\beta _{3,k}}({\varepsilon _j}))/{\varepsilon _j}}} \to c{\text{ as }}j \to \infty .
\end{equation}
Moreover, by \eqref{3.121} and \eqref{3.122}, we deduce that
\begin{equation}\label{3.123}
{S_{{\varepsilon _j},k}}({t'_{{\varepsilon _j},k}}) \in S_{({d_{{\varepsilon _j},k}} + {\beta _{3,k}}({\varepsilon _j}))/{\varepsilon _j}}^{{\delta _0}}.
\end{equation}
In view of \cite[Proposition~3.1]{b1} or Proposition~3.1, Lemma~3.2 and ``Proof of (i) of Proposition~3.1" in \cite{bzz}, we see from \eqref{3.120}, \eqref{3.121}, \eqref{3.122} and \eqref{3.123} that for ${\delta _0} > 0$ small, there exists a sequence $\{ {u_n}\} _{n = 1}^\infty  \subset S_{({d_{{\varepsilon _j},k}} + {\beta _{3,k}}({\varepsilon _j}))/{\varepsilon _j}}^{{\delta _0}}$ with $I({u_n}) \le {c_{({d_{{\varepsilon _j},k}} + {\beta _{3,k}}({\varepsilon _j}))/{\varepsilon _j}}}$ and ${I'_{({d_{{\varepsilon _j},k}} + {\beta _{3,k}}({\varepsilon _j}))/{\varepsilon _j}}}({u_n}) \to 0$ in ${(H_{{\text{rad}}}^1({B_{({d_{{\varepsilon _j},k}} + {\beta _{3,k}}({\varepsilon _j}))/{\varepsilon _j}}}(0)))^{ - 1}}$ as $n \to \infty $, where
\begin{equation}\label{3.124}
{I_R}(u) = \frac{1}
{2}\int_{{B_R}(0)} {|\nabla u{|^2}}  + \frac{1}
{2}\int_{{B_R}(0)} {{u^2}}  - \int_{{B_R}(0)} {F(u)} ,{\text{ }}u \in H_{{\text{rad}}}^1({B_R}(0)).
\end{equation}
Arguing as \eqref{3.22}-\eqref{add17} in Proposition~\ref{3.4.}, we get a ${u_{{\varepsilon _j} ,k}} \in H_{{\text{rad}}}^1({B_R}(0))$ such that ${u_n} \to {u_{{\varepsilon _j} ,k}}$ in $H_{\text{0}}^1({B_R}(0))$ and ${u_{{\varepsilon _j} ,k}}$ is a critical point of \eqref{3.124} with $R = ({d_{{\varepsilon _j},k}} + {\beta _{3,k}}({\varepsilon _j}))/{\varepsilon _j}$. Moreover, in view of ``Appendix: A lower energy estimate in Proposition~3.1" in \cite{b1} or Proposition~4.3 in \cite{b}, we have the lower energy estimate, that is for any ${\delta _1} > 1$,
\begin{equation}\label{3.125}
{c_{({d_{{\varepsilon _j},k}} + {\beta _{3,k}}({\varepsilon _j}))/{\varepsilon _j}}} \ge I({u_{{\varepsilon _j} ,k}}) \ge c + {C_{{\delta _1}}}\exp \Bigl( { - \frac{{2{\delta _1}}}
{{\varepsilon _j} }({d_{{\varepsilon _j},k}} + {\beta _{3,k}}({\varepsilon _j})} \Bigr).
\end{equation}
By \eqref{3.101}, \eqref{3.116}, \eqref{3.119} and \eqref{3.125}, we get
\[
\begin{array}{ll}
  {E_{{\varepsilon _j}}}({v_{{\varepsilon _j}}}) & \ge \dis Kc + {C_{{\delta _1}}}\sum\limits_{k = 1}^K {\exp \Bigl( { - \frac{{2{\delta _1}}}
{{{\varepsilon _j}}}({d_{{\varepsilon _j},k}} + {\beta _{3,k}}({\varepsilon _j}))} \Bigr)}  \hfill \\
   & \dis \quad - {C_\delta }\sum\limits_{k = 1}^K {\exp \Bigl( { - \frac{{2\delta }}
{{{\varepsilon _j}}}({d_{{\varepsilon _j},k}} + {\beta _{2,k}})} \Bigr)}  - {C_\delta }\sum\limits_{k = 1}^K {\exp \Bigl( { - \frac{{4\delta }}
{{{\varepsilon _j}}}{d_{{\varepsilon _j},k}}} \Bigr)} . \hfill \\
\end{array}
\]
For ${\varepsilon _j} > 0$ small and $0 < \delta  < 1 < {\delta _1}$ with $\delta$ and ${\delta _1}$ close to $1$, we see that
\begin{equation}\label{3.126}
{E_{{\varepsilon _j}}}({v_{{\varepsilon _j}}}) \ge Kc + {C_{{\delta _1}}}\sum\limits_{k = 1}^K {\exp \Bigl( { - \frac{{2{\delta _1}}}
{{{\varepsilon _j}}}({d_{{\varepsilon _j},k}} + {\beta _{3,k}}({\varepsilon _j}))} \Bigr)}.
\end{equation}
Recalling that ${v_\varepsilon } \in J_\varepsilon ^{\widetilde{{c_\varepsilon }} + \sum\nolimits_{k = 1}^K {\exp ( - 2{D_k}/\varepsilon )} } \cap X_\varepsilon ^{{d_0}}$, by Lemma~\ref{3.1.}(i), Lemma~\ref{3.2.} and \eqref{3.126}, we get \eqref{3.100}. \eqref{3.100} and $(H_1)$ imply that only the situation which is a  analogous to \textbf{Case~A} will occur, that is, for each $1 \le k \le K$,
\begin{equation}\label{3.127}
{\varepsilon _j}y_{{\varepsilon _j}}^k \in {\Lambda ^k}{\text{ and }}\mathop {\lim }\limits_{j \to \infty } d(y_{{\varepsilon _j}}^k,\partial {({\Lambda ^k})_{{\varepsilon _j}}}) =  + \infty .
\end{equation}
\textbf{Step 4}. We prove the existence and concentration results in Theorem~\ref{1.1.}. Choosing $\delta  = \bar \delta  > 0$ small in \eqref{3.97} to ensure that $f(t) < \frac{1}{2}t$ for $t \in (0,\bar \delta )$, by \eqref{3.127}, we see that $\mathop  \cup \nolimits_{k = 1}^K {B_{{R_{\bar \delta }}}}(y_{{\varepsilon _j}}^k) \subset {\Lambda _{{\varepsilon _j}}}$ for ${\varepsilon _j} > 0$ small. Thus, ${v_{{\varepsilon _j}}}$ is a solution to \eqref{3.1}, i.e. ${u_{{\varepsilon _j}}}(x) = {v_{{\varepsilon _j}}}(x/{\varepsilon _j})$ is a solution to \eqref{1.1}.

From \eqref{3.78} and elliptic estimates, we see that for each $1 \le k \le K$, $\{ {v_{{\varepsilon _j}}}(x + y_{{\varepsilon _j}}^k)\} _{j = 1}^\infty $ is bounded in $C_{{\text{loc}}}^{1,\alpha }({\mathbb{R}^N})$ for some $0 < \alpha  < 1$. It follows from Arzel\'{a}-Ascoli theorem and \eqref{3.77} that
\begin{equation}\label{3.128}
{v_{{\varepsilon _j}}}(x + y_{{\varepsilon _j}}^k) \to {U^k}(x){\text{ in }}C_{{\text{loc}}}^1{\text{(}}{\mathbb{R}^N}{\text{) as }}j \to \infty .
\end{equation}
Letting $x_{{\varepsilon _j}}^k$ be a maximum point of ${u_{{\varepsilon _j}}}$ in $\overline {{\Lambda ^k}} $, we see from \eqref{3.128} that for ${\varepsilon _j} > 0$ small, ${u_{{\varepsilon _j}}}(x_{{\varepsilon _j}}^k) = {v_{{\varepsilon _j}}}(x_{{\varepsilon _j}}^k/{\varepsilon _j}) \ge {v_{{\varepsilon _j}}}(y_{{\varepsilon _j}}^k) \ge {U^k}(0)/2 > 0$. Choosing $\delta  = \tilde \delta : = \mathop {\min }\nolimits_{1 \le k \le K} \{ {U^k}(0)/2\}  > 0$ in \eqref{3.97}, then there exists ${R_{\tilde \delta }} > 0$ such that $|{v_{{\varepsilon _j}}}(x)| < \tilde \delta $ for all $x \in {\Omega _{{\varepsilon _j}}}\backslash \mathop  \cup \nolimits_{k = 1}^K {B_{{R_{\tilde \delta }}}}(y_{{\varepsilon _j}}^k)$. Thus, $|(x_{{\varepsilon _j}}^k/{\varepsilon _j}) - y_{{\varepsilon _j}}^k| \le {R_{\tilde \delta }}$, by \eqref{3.100}, we see that $d(x_{{\varepsilon _j}}^k,\partial \Omega ) \to {D_k}$ as $j \to \infty $. By the arbitrariness of the sequence $\{ {\varepsilon _j}\} _{j = 1}^\infty $, we obtain the existence and concentration results in Theorem~\ref{1.1.}.

The exponential decay of ${u_\varepsilon }$ follows immediately from \eqref{3.98}. This finishes the proof.
\end{proof}

\end{document}